\def\@tocline#1#2#3#4#5#6#7{\relax
  \ifnum #1>\c@tocdepth 
  \else
    \par \addpenalty\@secpenalty\addvspace{#2}%
    \begingroup \hyphenpenalty\@M
    \@ifempty{#4}{%
      \@tempdima\csname r@tocindent\number#1\endcsname\relax
    }{%
      \@tempdima#4\relax
    }%
    \parindent\z@ \leftskip#3\relax \advance\leftskip\@tempdima\relax
    \rightskip\@pnumwidth plus4em \parfillskip-\@pnumwidth
    #5\leavevmode\hskip-\@tempdima
      \ifcase #1
       \or\or \hskip 1em \or \hskip 2em \else \hskip 3em \fi%
      #6\nobreak\relax
    \hfill\hbox to\@pnumwidth{\@tocpagenum{#7}}\par
    \nobreak
    \endgroup
  \fi}
\declaretheorem[numberwithin=section]{theorem}
\declaretheorem[numberlike=theorem]{lemma}
\declaretheorem[numberlike=theorem]{proposition}
\declaretheorem[numberlike=theorem]{algorithm}
\declaretheorem[style=definition,numberlike=theorem]{definition}
\declaretheorem[style=definition,numberlike=theorem]{example}
\declaretheorem[style=remark,numberlike=theorem]{remark}
\newcommand{\terminology}[1]{\emph{#1}}
\newcommand{\RR}{\mathbb{R}}
\newcommand{\NN}{\mathbb{N}}
\newcommand{\ZZ}{\mathbb{Z}}
\newcommand{\freecarnot}[1]{F_{#1}}
\newcommand{\freelie}[1]{\mathfrak{f}_{#1}}
\newcommand{\lielayer}[2]{#1^{[#2]}}
\newcommand{\freelielayer}[2]{\lielayer{\freelie{#1}}{#2}}
\newcommand{\lowercentralseriesterm}[2]{\lielayer{#1}{\geq #2}}
\newcommand{\abnormalpolynomial}[2]{P_{#1}^{#2}}
\newcommand{\Ad}[1]{\operatorname{Ad}_{#1}}
\newcommand{\ad}[1]{\operatorname{ad}_{#1}}
\newcommand{\abs}[1]{\left\vert #1\right\vert}
\newcommand{\freelieaction}{D}
\newcommand{\acts}{\curvearrowright}
\newcommand{\into}{\hookrightarrow}
\newcommand{\identity}[1]{e}
\newcommand{\hallset}{\mathcal{H}}
\newcommand{\polyring}[1]{\RR[#1]}
\newcommand{\fractionfield}[1]{\RR(#1)}
\newcommand{\polyringextended}[2]{\fractionfield{#1}[#2]}
\newcommand{\covector}{\lambda}
\newcommand{\covectorb}{\eta}
\newcommand{\differenceinteger}[1]{C_{#1}}
\newcommand{\poincare}{P}
\newcommand{\vectorparam}[1]{\boldsymbol{#1}}
\newcommand{\freeparam}{\alpha}
\newcommand{\factorpolynomial}[1]{S^{#1}}
\newcommand{\factorcoefficient}[1]{\nu_{#1}}
\newcommand{\lt}{<}
\newcommand{\gt}{>}
\newcommand{\amp}{&}
\title[ODE trajectories as abnormals]{ODE trajectories as abnormal curves in Carnot groups}
\author{Eero Hakavuori}
\address[Hakavuori]{SISSA, Via Bonomea 265, 34136 Trieste}
\email{eero.hakavuori@sissa.it}
\keywords{Carnot groups, sub-Riemannian geometry, abnormal extremals}
\date{June 17, 2020}
\begin{document}
\label{ode_abnormals}

\begin{abstract}
We prove that for every polynomial ODE there exists a Carnot group where the trajectories of the ODE lift to abnormal curves. The proof defines an explicit construction to determine a covector for the resulting abnormal curves. Using this method we give new examples of abnormal curves in Carnot groups of high step. As a byproduct of the argument, we also prove that concatenations of abnormal curves have abnormal lifts.
\end{abstract}

\subjclass[2010]{%
53C17, 
37N35, 
22E25. 
}

\maketitle
\tableofcontents\typeout{************************************************}
\typeout{Section 1 Introduction}
\typeout{************************************************}

\section{Introduction}\label{section-introduction}
\label{paragraph-carnot}
\label{g:notation:idp1}\label{g:notation:idp2} A Carnot group is a simply connected nilpotent Lie group \(G\) whose Lie algebra \(\mathfrak{g}\) admits a stratification \(\mathfrak{g}=\lielayer{\mathfrak{g}}{1}\oplus\dots\oplus\lielayer{\mathfrak{g}}{s}\), i.e.\@, a decomposition such that \([\lielayer{\mathfrak{g}}{1}, \lielayer{\mathfrak{g}}{i}] = \lielayer{\mathfrak{g}}{i+1}\) for all \(i=1,\ldots,s\), where \(\lielayer{\mathfrak{g}}{s+1}=\{0\}\). The dimension \(r=\dim\lielayer{\mathfrak{g}}{1}\) is called the \terminology{rank} of the Carnot group \(G\) and the largest integer \(s\) such that \(\lielayer{\mathfrak{g}}{s}\neq\{0\}\) is called the \terminology{step}. Let
\begin{equation*}
\operatorname{End}\colon L^2([0,1];\lielayer{\mathfrak{g}}{1}) \to G,\quad u\mapsto \gamma_u(1)
\end{equation*}
be the endpoint map, where \(\gamma_u\colon [0,1]\to G\) is the \terminology{horizontal curve} starting from the identity element \(\identity{G}\in G\) with control \(u\), i.e.\@, the unique absolutely continuous curve such that
\begin{equation*}
\frac{d}{dt}\gamma_u(t) = (L_{\gamma(t)})_*u(t)\quad\text{and}\quad \gamma_u(0) = \identity{G}\text{.}
\end{equation*}
Here \(L_{\gamma(t)}\colon G\to G\) is the left translation \(L_{\gamma(t)}(g) = \gamma(t)\cdot g\).

The \terminology{abnormal curves} are the trajectories \(\gamma_u\) for critical points \(u\in L^2([0,1];\lielayer{\mathfrak{g}}{1})\) of the endpoint map. Their significance arises from the Pontryagin Maximum Principle, which separates potentially length-minimizing curves in sub-Riemannian manifolds into two types: the normal and the abnormal extremals, see e.g. \cite[Section~3.4]{ABB-2020-subriemannian_geometry} for details. The normal extremals are well behaved, but the abnormal ones are the subject of two important open problems of sub-Riemannian geometry: the regularity of length minimizers and the Sard Conjecture, see \cite[Section~10]{Montgomery-2002-tour_of_subriemannian_geometries}.

The regularity problem asks what is the minimal regularity of sub-Riemannian length-minimizing curves. The issue is the existence of strictly abnormal minimizers, see \cite{Montgomery-1994-abnormal_minimizers}, \cite{Liu-Sussman-1995-shortest_paths_ranks_two}, \cite{Gole-Karidi-1995_carnot_geodesics} for some examples. The abnormal curves in general have no regularity beyond being Lipschitz, but nonetheless all known examples of abnormal length minimizers are \(C^\infty\)-smooth. Many partial regularity results for length minimizers exist, such as \(C^\infty\)-regularity in generic sub-Riemannian structures of rank at least \(3\) \cite{Chitour-Jean-Trelat-2006-genericity_results}, analyticity on an open dense subset of each minimizer \cite{Sussmann-2014-open_dense_regularity}, nonexistence of corner type singularities \cite{Hakavuori-Le_Donne-2016-corners}, and \(C^1\)-regularity in dimension 3 \cite{BFPR-2018-3d_Sard}.

The Sard Conjecture is that the set of critical values of the endpoint map should have zero measure, i.e.\@, most points should not be reachable from a fixed initial point with an abnormal curve. The set of critical values is known as the \terminology{abnormal set}. A more restricted variant is the Minimizing Sard Conjecture, where the abnormal curves are in addition required to be length minimizers. As with the regularity problem, several partial results exist. For instance, the minimizing abnormal set is contained in a closed nowhere dense set \cite{Agrachev-2009-points_of_smoothness}, the abnormal set is a proper algebraic or analytic subvariety in Carnot groups and polarized groups of particular types \cite{LMOPV-2016-sard}, and in dimension 3 the abnormal set is a countable union of semianalytic curves \cite{BFPR-2018-3d_Sard}.

Recently, both the regularity and Sard problems have seen progress through the study of abnormal curves from a dynamical systems viewpoint. A class of potentially minimizing abnormal curves in rank 2 sub-Riemannian structures was proved to have at least \(C^1\)-regularity \cite{BCJPS-2018-rank_2_abnormals}, and the Sard Conjecture was proved in Carnot groups of rank 2 step 4 and rank 3 step 3 \cite{Boarotto-Vittone-2020-dynamical_sard}. The idea common to both articles is that differentiating the identities defining abnormal curves leads to an ODE system that some reparametrization of the control of the abnormal curve will satisfy. Then using normal forms for the resulting ODEs, the authors of \cite{BCJPS-2018-rank_2_abnormals} and \cite{Boarotto-Vittone-2020-dynamical_sard} were able to prove their respective claims by studying trajectories of finitely many explicit ODE systems.

One reason for the success of the previous two articles was that the dynamical systems were not completely arbitrary, but had some properties that helped simplify the systems, such as the linear part being a traceless matrix. The goal of this paper is to study the scope of these dynamical systems within the setting of Carnot groups. The underlying questions motivating this research are:
\begin{itemize}[label=\textbullet]
\item{}Which dynamical systems can arise as the ODE systems of abnormal curves in Carnot groups?
\item{}Which curves in \(\RR^r\) are the horizontal projections of abnormal curves in Carnot groups of rank \(r\)?
\end{itemize}
The main result of this paper is that without fixing a specific Carnot group to study, there are essentially no restrictions on the possible dynamics of abnormal trajectories.
\begin{theorem}\label{theorem-every-ode-trajectory-is-abnormal}
Let \(P\colon G\to TG\) be a polynomial vector field in a Carnot group \(G\) of rank \(r\). Then there exists a Carnot group \(F\) of rank \(r\) and a surjective {Carnot group homomorphism} \(F\to G\) such that horizontal trajectories of the ODE \(\dot{x} = P(x)\) {lift} to abnormal curves in \(F\).
\end{theorem}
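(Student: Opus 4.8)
The plan is to work with the Lie-algebraic description of abnormality. Fix left-invariant vector fields $X_1,\dots,X_r$ spanning the horizontal layer. A horizontal curve $\gamma$ in a Carnot group $F$ with control $u=\sum_i u_iX_i$ is abnormal exactly when there is a nonzero covector curve $\covector(t)\in\mathfrak{f}^*$ solving the transport equation $\dot{\covector}=\ad{u}^{*}\covector$ together with the horizontality constraints $\langle\covector(t),X_i\rangle=0$ for every $t$ and every $i$. Since the general solution of the transport equation is $\covector(t)=\Ad{\gamma(t)}^{*}\covector_0$ for a fixed $\covector_0\in\mathfrak{f}^*$ (up to the choice of sign convention), the whole problem becomes: choose the Carnot group $F$ and the initial covector $\covector_0$ so that the horizontality constraints hold along the lift of every horizontal trajectory of $\dot x=P(x)$. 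Along such a trajectory the control is $u_i(t)=P_i(\gamma(t))$, where the $P_i$ are the horizontal components of $P$, polynomial functions on $G$ that pull back to $F$.

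First I would pass to the free Carnot group $\freecarnot{r}$ of rank $r$ and sufficiently large step, with free Lie algebra $\freelie{r}$ and a fixed Hall basis $\hallset$. Because a surjective Carnot homomorphism of equal rank restricts to an isomorphism of first layers, horizontal curves always lift, and every Carnot group of rank $r$ and bounded step is a quotient of $\freecarnot{r}$. The target group will therefore sit in a chain $\freecarnot{r}\twoheadrightarrow F\twoheadrightarrow G$, that is $F=\freecarnot{r}/\exp\mathfrak{i}$ for an ideal $\mathfrak{i}$ contained in the kernel of $\freecarnot{r}\to G$, and $\covector_0$ must annihilate $\mathfrak{i}$. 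The purpose of passing to a quotient is to delete iterated brackets, and thereby delete horizontality constraints, retaining only those that an ODE trajectory can actually satisfy.

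The engine is the differentiated horizontality. Writing $\covector_w(t)=\langle\covector(t),w\rangle$ for $w\in\hallset$, the transport equation reads $\dot{\covector}_w=\sum_i u_i\covector_{[X_i,w]}$, so each component is driven by the components one layer above it. Since the top-layer components are constant and each lower layer is recovered by integrating $u$ against the layer above, one can build $\covector$ from the top down; the free data are the constant top-layer components and the choice of which brackets survive in $F$. Repeatedly differentiating $\covector_{X_i}\equiv0$ then yields the abnormal polynomials $\abnormalpolynomial{\covector}{i}$ in $u$, its derivatives, and the higher components of $\covector$, and abnormality is equivalent to $\abnormalpolynomial{\covector}{i}\equiv0$ along $\gamma$. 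The aim is to engineer the top-layer data and the surviving brackets so that, after substituting $u_i=P_i(\gamma)$, the system $\abnormalpolynomial{\covector}{i}=0$ collapses exactly to $\dot x=P(x)$ together with its prolongations.

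The hard part, where essentially all the work sits, is to match the prescribed and generally inhomogeneous polynomial $P$ against the homogeneous, weighted bracket structure while keeping the whole tower of derived constraints simultaneously consistent and $\covector_0$ nonzero. Concretely, the monomials occurring in $P_1,\dots,P_r$ must be realized as higher-layer expressions in the group and the covector, which forces careful bookkeeping of layers and polynomial degrees—this is what I expect the later factorization apparatus to organize. Simultaneously, the ideal $\mathfrak{i}$ has to be large enough to kill every bracket that would otherwise impose an unwanted constraint, yet small enough that $\mathfrak{i}\subseteq\ker(\freecarnot{r}\to G)$ and that $\covector_0$ descends to a nonzero element of $(\freelie{r}/\mathfrak{i})^{*}$. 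Once such a $\covector_0$ is produced, the surjectivity of $F\to G$ and the lifting statement are comparatively formal, so I expect the crux to be the explicit construction of $\covector_0$ and the verification that its first-layer pairing vanishes for all time precisely on trajectories of the ODE.
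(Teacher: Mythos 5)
Your setup is sound---the characterization of abnormality through the covector transport equation $\dot{\covector}=\ad{u}^*\covector$ with $\covector(t)=\Ad{\gamma(t)}^*\covector_0$ is exactly the abnormal-polynomial formulation the paper uses, and passing to a free Carnot group and its quotients is the right ambient framework. But the proposal stops precisely where the proof has to begin, and the strategy you sketch for the ``hard part'' is not one that can be carried out as stated. You propose to engineer the covector and the quotient so that the vanishing conditions $\abnormalpolynomial{X_i}{\covector}\circ\gamma\equiv 0$ ``collapse exactly to $\dot x = P(x)$ together with its prolongations.'' This asks for far too much: the abnormality conditions say that the curve lies in an algebraic variety (the common zero set of the abnormal polynomials), which is a closed, non-differential condition, whereas the ODE is a first-order differential condition selecting a one-parameter family of trajectories. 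There is no reason the abnormal variety of any covector should coincide with the union of trajectories of a given ODE, and no mechanism in your sketch produces such a coincidence. The paper's key observation is that only \emph{containment} is needed, and it achieves containment through two ideas absent from your proposal.

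First, one constructs a polynomial first integral: choosing $(Q_1,\dots,Q_r)$ pointwise orthogonal to $(P_1,\dots,P_r)$, the paper's Lemma on derivatives in the free Lie algebra (proved by showing a certain left-invariant one-form is closed, hence exact on the simply connected group) produces a single polynomial $Q$ with $X_iQ=Q_i$, so that $Q$ is constant---and after normalization zero---along every trajectory through the identity. Second, one makes $Q$ a common factor of \emph{all} abnormal polynomials $\abnormalpolynomial{X}{\covector}$ for $X$ in a fixed layer $m$; the requirement $\abnormalpolynomial{X}{\covector}=\factorpolynomial{}Q$ is a homogeneous linear system in $(\covector,\factorcoefficient{})$, and a Poincar\'e-series count shows the number of unknowns eventually outgrows the number of equations as the step increases, so a nontrivial solution exists. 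Combined with an inductive lemma showing that annihilating the layer-$m$ abnormal polynomials already forces abnormality (for curves whose closure meets the identity; general trajectories are handled by left translation), this gives the result. Without the first-integral construction and the underdetermined-system argument---or some substitute for them---your proposal does not yet contain a proof.
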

The Carnot group \(F\) in {Theorem~{\ref{theorem-every-ode-trajectory-is-abnormal}}} may be taken to be a free Carnot group whose step depends only on the degrees of the polynomials of the ODE \(P\), see {Theorem~{\ref{theorem-every-ode-trajectory-is-abnormal-quantified}}} for the more precise version. Applying {Theorem~{\ref{theorem-every-ode-trajectory-is-abnormal}}} to the simplest possible Carnot groups \(G=\RR^r\) shows that all trajectories of polynomial ODEs in \(\RR^r\) are the horizontal projections of abnormal curves of Carnot groups of rank \(r\). Several examples of this type will be presented in {Section~{\ref{section-examples}}}. {Theorem~{\ref{theorem-every-ode-trajectory-is-abnormal}}} can also be viewed as an algebraic complexity counterpart to the metric complexity result of \cite{Le_Donne-Zust-2019-inverse_limits} that arbitrary curves in \(\RR^r\) are well approximated by horizontal projections of length minimizing curves of rank \(r\) Carnot groups.

The ingredients used to prove {Theorem~{\ref{theorem-every-ode-trajectory-is-abnormal}}} also produce an auxiliary result that finite concatenations of abnormal curves have abnormal lifts.
\begin{theorem}\label{theorem-concatenation-of-abnormals}
Let \(G\) be a Carnot group of rank \(r\). Then there exists a Carnot group \(F\) of rank \(r\) and a surjective {Carnot group homomorphism} \(F\to G\) such that for all abnormal curves \(\alpha\) and \(\beta\) in \(G\) the concatenation \(\alpha\star\beta\) lifts to an abnormal curve in \(F\).
\end{theorem}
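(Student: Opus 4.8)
The plan is to recast abnormality in dual terms and reduce the theorem to the existence of a single covector certifying both halves of the concatenation at once. Recall the standard characterization: a horizontal curve $\gamma$ in a Carnot group is abnormal exactly when there is a nonzero covector $\covector$ in the dual of the Lie algebra with $\langle\covector,\Ad{\gamma(t)^{-1}}\lielayer{\mathfrak{g}}{1}\rangle=0$ for a.e.\ $t$. Writing $W_\gamma:=\operatorname{span}\{\Ad{\gamma(t)^{-1}}v:t\in[0,1],\ v\in\lielayer{\mathfrak{g}}{1}\}$, this says $W_\gamma$ is a proper subspace and the admissible covectors are precisely $W_\gamma^{\perp}$. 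I would take $F$ to be a free Carnot group of rank $r$ whose step depends only on the step of $G$, together with the canonical projection $\pi\colon F\to G$, and lift $\alpha,\beta$ to $\tilde\alpha,\tilde\beta$ by keeping the same controls.

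First I would record that lifting preserves abnormality: since $\pi$ is a Carnot homomorphism, $d\pi\circ\Ad{g}=\Ad{\pi(g)}\circ d\pi$, so any $\covector$ certifying $\alpha$ in $G$ pulls back to a covector $\pi^{*}\covector$ certifying $\tilde\alpha$ in $F$, and likewise for $\beta$. Normalizing so that $\alpha\star\beta$ runs along $\tilde\alpha$ on $[0,1]$ and along $p\cdot\tilde\beta$ on $[1,2]$ with $p=\tilde\alpha(1)$, and using $(p\cdot\tilde\beta(s))^{-1}=\tilde\beta(s)^{-1}p^{-1}$, the abnormality of the lifted concatenation unwinds into the two requirements $\covector\in W_{\tilde\alpha}^{\perp}$ and $\covector\in W_{p\cdot\tilde\beta}^{\perp}$. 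Thus the theorem reduces to the single linear-algebra statement $W_{\tilde\alpha}^{\perp}\cap W_{p\cdot\tilde\beta}^{\perp}\neq\{0\}$, equivalently $W_{\tilde\alpha}+W_{p\cdot\tilde\beta}\neq\mathfrak{f}$, inside the free group. Note that the naive candidates (the two pullback covectors) will not suffice, since $\pi^{*}$ is injective and the concatenation already fails to be abnormal in $G$; the needed covector must use the extra directions of $\mathfrak{f}$ living in the kernel of $\pi^{*}$.

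To produce such a common covector I would exploit the room in the free Lie algebra. Because $W_{\tilde\alpha}$ and $W_{p\cdot\tilde\beta}$ are generated by iterated brackets of the respective controls against the first layer, their components in the deep layers are spanned by a controlled family of right-normed brackets, while the dimension of those layers grows (exponentially in the step). The plan is to feed the two individual certificates into the explicit covector construction of Theorem~\ref{theorem-every-ode-trajectory-is-abnormal-quantified}, that is, the abnormal polynomials $\abnormalpolynomial{i}{j}$ and their factorizations, and to use the freely available upper-layer directions to deform the pullback covector of $\alpha$ into one that in addition annihilates $W_{p\cdot\tilde\beta}$. Concretely, one solves in the upper layers the inhomogeneous linear system measuring the failure of $\pi^{*}\covector$ on the $\beta$-half, the homogeneous part being the constraint that the correction still annihilate $W_{\tilde\alpha}$.

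The main obstacle I expect is the junction. Translating $\beta$ by $p=\tilde\alpha(1)$ replaces its defining first layer $\lielayer{\mathfrak{f}}{1}$ by $\Ad{p^{-1}}\lielayer{\mathfrak{f}}{1}$, which couples all layers through $\log p$ and enlarges the deep-layer reach of $W_{p\cdot\tilde\beta}$; the two halves then constrain $\covector$ in overlapping layers, and the delicate point is to show that these overlapping constraints remain jointly solvable in a free group whose step is chosen uniformly, depending only on $G$ and not on the particular abnormal curves. Bounding the deep-layer reach of each half uniformly, so that it cannot exhaust the much larger upper layers, and then verifying the resulting compatibility, is the step I expect to be hardest, and it is precisely where the freeness of $F$ and the explicit construction of the covector are indispensable.
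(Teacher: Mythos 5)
Your reduction to finding a single nonzero covector in a free group of higher step that certifies both halves is the right general shape, and you are right that the naive pullback covectors cannot work. But the plan stalls exactly where you say you expect it to be hardest, and that gap is not a technicality: you never establish that your inhomogeneous linear system (deforming $\pi^*\covector_\alpha$ in the upper layers so that it also annihilates the $\beta$-half) is solvable, let alone solvable in a step depending only on $G$ and not on the curves. The subspaces $W_{\tilde\alpha}$ and $W_{p\cdot\tilde\beta}$ carry no useful a priori codimension bounds in the deep layers of the free Lie algebra, so ``the layers grow fast enough'' is not by itself an argument; a covector annihilating $W_{\tilde\alpha}+W_{p\cdot\tilde\beta}$ is precisely first-order abnormality of the lifted concatenation, so producing it directly is the whole problem restated, not a reduction of it.

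The idea you are missing is to \emph{multiply} the two certificates rather than intersect the two annihilators. After normalizing by left translation and reparametrization so that the junction is at the identity, pick horizontal $X,Y$ with $\abnormalpolynomial{X}{\covector_\alpha}$ and $\abnormalpolynomial{Y}{\covector_\beta}$ nonzero and set $Q:=\abnormalpolynomial{X}{\covector_\alpha}\abnormalpolynomial{Y}{\covector_\beta}$. The first factor vanishes along $\alpha$ and the second along $\beta$, so $Q$ vanishes identically along the whole concatenation; this single polynomial absorbs the junction coupling you worry about. Proposition~\ref{proposition-existence-of-polynomial-factor} then produces, in a free group $\freecarnot{r,s_2}$ whose step depends only on $\deg Q\leq 2s-2$ and the variables involved (hence only on $G$), a covector $\covectorb$ making $Q$ a common factor of all abnormal polynomials $\abnormalpolynomial{Z}{\covectorb}$ for $Z$ in a fixed high layer. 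That is where the underdetermined-system count actually lives, and it is tractable precisely because the unknowns are the cofactors of the fixed polynomial $Q$ together with $\covectorb$, not an arbitrary annihilator of $W_{\tilde\alpha}+W_{p\cdot\tilde\beta}$. Finally Lemma~\ref{lemma-curves-annihilating-higher-order-abnormal-polynomials} (which requires the curve's closure to contain the identity, i.e.\ exactly your normalization of the junction) descends from vanishing on that single high layer to abnormality. Without the product trick and this layer-descent lemma, your outline does not close.
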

A potentially useful feature of the method of {Theorem~{\ref{theorem-every-ode-trajectory-is-abnormal}}} and {Theorem~{\ref{theorem-concatenation-of-abnormals}}} is that the abnormal lifts satisfy the Goh condition, see e.g. \cite[Section~12.3]{ABB-2020-subriemannian_geometry} for the definition. Moreover the groups \(F\) can be chosen such that the abnormal lifts also satisfy even higher order abnormality conditions such as the third order condition of \cite{Boarotto-Monti-Palmurella-2019-third_order}.

\typeout{************************************************}
\typeout{Subsection 1.1 Structure of the paper}
\typeout{************************************************}

\subsection{Structure of the paper}\label{g:subsection:idp3}
{Section~{\ref{section-abnormal-polynomials}}} lays the foundations needed to prove the main results. The characterization of abnormal curves in terms of abnormal polynomials is covered in {Subsection~{\ref{subsection-abnormal-polynomials}}}. {Subsection~{\ref{subsection-hall-sets}}} recalls the Hall basis construction for free Lie algebras and fixes the particular basis to be used in the rest of the paper. {Subsection~{\ref{subsection-polynomials-in-free-lie-algebras}}} describes how to use the fixed Hall basis to transfer abnormal polynomials between Carnot groups of different step by treating them as polynomials in a free Lie algebra. An important construction is the action of a free Lie algebra as derivations on its polynomial ring. Finally, {Subsection~{\ref{subsection-poincare-series}}} recalls the definition of a Poincaré series to facilitate counting the number of monomials of given degrees.

The core of the proofs of the main theorems is covered in {Section~{\ref{section-abnormality-of-ode-trajectories}}}. {Subsection~{\ref{subsection-reduction-to-searching-polynomial-factors}}} reduces abnormality of a trajectory of an ODE to the existence of a common factor in abnormal polynomials. {Subsection~{\ref{subsection-a-quotient-eliminating-higher-degree-variables}}} defines a quotient Lie algebra that eliminates irrelevant variables from the abnormal polynomials. The key argument is in {Subsection~{\ref{subsection-existence-of-arbitrary-polynomial-factors}}}, which rephrases finding a common factor as an eventually underdetermined linear system. The proofs of the main theorems are concluded in {Section~{\ref{section-main-proofs}}}.

The final part of the paper in {Section~{\ref{section-examples}}} is dedicated to constructing examples of abnormal curves. The proof of {Theorem~{\ref{theorem-every-ode-trajectory-is-abnormal}}} is condensed into an algorithm and several examples are computed, showcasing some possibilities and limitations of the algorithm.

\typeout{************************************************}
\typeout{Section 2 Abnormal polynomials}
\typeout{************************************************}

\section{Abnormal polynomials}\label{section-abnormal-polynomials}

\typeout{************************************************}
\typeout{Subsection 2.1 Abnormal curves and polynomials}
\typeout{************************************************}

\subsection{Abnormal curves and polynomials}\label{subsection-abnormal-polynomials}
In addition to the characterization as singular points of the endpoint map, abnormal curves can also be defined as characteristic curves of the canonical symplectic form on the cotangent bundle \cite{Hsu-1992-calc_var_via_griffiths_formalism}, see also \cite[Section~4.3.2]{ABB-2020-subriemannian_geometry}. In the setting of Carnot groups, right-trivialization of the cotangent bundle transfers this characterization from the cotangent bundle to the following description in terms of the Lie algebra, see \cite[Section~2.3]{LMOPV-2016-sard}.
\begin{definition}\label{definition-abnormal-polynomial}
\label{g:notation:idp4}
Let \(G\) be a Carnot group and \(\mathfrak{g}\) its Lie algebra. Let \(X\in\mathfrak{g}\) be a vector and \(\covector\in\mathfrak{g}^*\) a covector. The polynomial
\begin{equation*}
\abnormalpolynomial{X}{\covector}\colon G\to\RR,\quad \abnormalpolynomial{X}{\covector}(g) = \covector(\Ad{g}X)
\end{equation*}
is called an \terminology{abnormal polynomial}. Here \(\Ad{g}\in \mathrm{GL}(\mathfrak{g})\) is the adjoint map, i.e.\@ the differential of the conjugation map \(G\to G\), \(h\mapsto ghg^{-1}\).
\end{definition}
The abnormal polynomials were introduced with a different presentation in \cite{LLMV-2013-extremal_curves} and \cite{LLMV-2018-extremal_polynomials} under the name ``extremal polynomial'' for the purpose of characterizing abnormal curves. The presentation using the adjoint map was introduced in \cite{LMOPV-2016-sard}. The characterization in \cite[Theorem~1.1]{LLMV-2013-extremal_curves} and \cite[Corollary~2.14]{LMOPV-2016-sard} is the following.
\begin{lemma}\label{lemma-curves-in-abnormal-variety-are-abnormal}
A horizontal curve \(\gamma\) is abnormal if and only if there exists a nonzero \(\covector\in\mathfrak{g}^*\) such that \(\abnormalpolynomial{X}{\covector}\circ\gamma\equiv 0\) for all horizontal vectors \(X\in\lielayer{\mathfrak{g}}{1}\).
\end{lemma}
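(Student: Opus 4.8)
The plan is to unwind the definition of abnormality — that the control \(u\) is a critical point of the endpoint map \(\operatorname{End}\) — into a statement about an annihilating covector, and then to identify that annihilator with the covector \(\covector\) appearing in the abnormal polynomial. Recall that \(\gamma = \gamma_u\) is abnormal precisely when the differential \(d_u\operatorname{End}\) fails to be surjective onto \(T_{\gamma(1)}G\). Since \(G\) is finite dimensional, non-surjectivity of this linear map is equivalent to the existence of a nonzero covector \(\tilde\covector \in T^*_{\gamma(1)}G\) annihilating its image. Thus the whole statement reduces to showing that such an annihilator exists if and only if some nonzero \(\covector \in \mathfrak{g}^*\) satisfies \(\abnormalpolynomial{X}{\covector}\circ\gamma \equiv 0\) for all \(X \in \lielayer{\mathfrak{g}}{1}\).

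The heart of the argument, and the step I expect to be the main obstacle, is an explicit formula for \(d_u\operatorname{End}\). First I would compute the variation of the trajectory under a perturbation \(u \mapsto u + \varepsilon v\) of the control. Writing the perturbed trajectory as \(g_\varepsilon(t) = \gamma(t)\cdot h_\varepsilon(t)\) and using the product rule for the left logarithmic derivative \(\delta^L(ab) = \Ad{b^{-1}}\delta^L a + \delta^L b\), the correction \(\zeta(t) = \partial_\varepsilon\big|_{\varepsilon=0} h_\varepsilon(t) \in \mathfrak{g}\) is found to satisfy the linear ODE \(\dot\zeta + \ad{u}\zeta = v\) with \(\zeta(0) = 0\). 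Since \(t \mapsto \Ad{\gamma(t)^{-1}}\) is the fundamental solution of the homogeneous part, variation of parameters gives \(\zeta(1) = \Ad{\gamma(1)^{-1}}\int_0^1 \Ad{\gamma(t)} v(t)\,dt\). Translating back to \(T_{\gamma(1)}G\) and simplifying \(\Ad{}\) into the right translation \(R_{\gamma(1)}\), this yields the right-trivialized formula
\begin{equation*}
d_u\operatorname{End}(v) = (R_{\gamma(1)})_*\int_0^1 \Ad{\gamma(t)}v(t)\,dt\text{.}
\end{equation*}
Care with the left/right trivialization conventions is essential here: it is the right-trivialized differential that produces \(\Ad{\gamma(t)}\) rather than \(\Ad{\gamma(t)^{-1}}\), which is exactly what is needed to match the sign in the abnormal polynomial.

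With this formula in hand, the remaining steps are routine. Pulling the annihilator \(\tilde\covector\) back by \(R_{\gamma(1)}\) to a covector \(\covector = (R_{\gamma(1)})^*\tilde\covector \in \mathfrak{g}^*\), which is nonzero if and only if \(\tilde\covector\) is, the annihilation condition \(\tilde\covector(d_u\operatorname{End}(v)) = 0\) for all \(v\) becomes \(\int_0^1 \covector(\Ad{\gamma(t)}v(t))\,dt = 0\) for every \(v \in L^2([0,1];\lielayer{\mathfrak{g}}{1})\). By the arbitrariness of \(v\) (the fundamental lemma of the calculus of variations), this holds if and only if \(\covector(\Ad{\gamma(t)}X) = 0\) for almost every \(t\) and every \(X \in \lielayer{\mathfrak{g}}{1}\); since \(t \mapsto \covector(\Ad{\gamma(t)}X) = \abnormalpolynomial{X}{\covector}(\gamma(t))\) is continuous, ``almost every'' upgrades to ``every''. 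Reading this back through the definition \(\abnormalpolynomial{X}{\covector}(g) = \covector(\Ad{g}X)\) gives exactly \(\abnormalpolynomial{X}{\covector}\circ\gamma \equiv 0\) for all \(X \in \lielayer{\mathfrak{g}}{1}\), and as every implication in the chain is reversible, running the equivalences in both directions proves the stated characterization.
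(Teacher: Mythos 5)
Your proposal is correct, but it takes a different route from the paper: the paper does not prove this lemma at all, instead quoting it from \cite{LLMV-2013-extremal_curves} and \cite{LMOPV-2016-sard}, where it is obtained by right-trivializing the cotangent bundle in the characterization of abnormal extremals as characteristic curves of the canonical symplectic form. Your argument is the more elementary and self-contained one: you compute the first variation of the trajectory directly, and your key formula \(d_u\operatorname{End}(v) = (R_{\gamma(1)})_*\int_0^1 \Ad{\gamma(t)}v(t)\,dt\) checks out --- the variation \(\zeta\) does satisfy \(\dot\zeta + \ad{u}\zeta = v\), the map \(t\mapsto\Ad{\gamma(t)^{-1}}\) is indeed the fundamental solution of the homogeneous part (since the right logarithmic derivative of \(\gamma^{-1}\) is \(-u\)), and \((L_{\gamma(1)})_*\Ad{\gamma(1)^{-1}} = (R_{\gamma(1)})_*\) converts the left-trivialized endpoint variation into the stated right-trivialized form. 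From there the equivalence with \(\abnormalpolynomial{X}{\covector}\circ\gamma\equiv 0\) for all \(X\in\lielayer{\mathfrak{g}}{1}\) follows exactly as you say, via the annihilator characterization of non-surjectivity in finite dimensions, the fundamental lemma of the calculus of variations applied to \(v(t)=\varphi(t)X\), and continuity of \(t\mapsto\covector(\Ad{\gamma(t)}X)\) to upgrade ``almost every'' to ``every''; all steps reverse. What the symplectic route buys is uniformity --- it places this lemma inside the general Hamiltonian framework for abnormal extremals on arbitrary sub-Riemannian manifolds, of which the Carnot case is the right-trivialized specialization --- whereas your direct computation buys transparency: it makes visible exactly where the adjoint map \(\Ad{\gamma(t)}\), and hence the polynomial \(\abnormalpolynomial{X}{\covector}(g)=\covector(\Ad{g}X)\), comes from. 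One small point of care, which you flag yourself: the sign conventions in the left/right trivializations must be tracked consistently, since an inverted convention would produce \(\Ad{\gamma(t)^{-1}}\) in the integrand and a superficially different (though equivalent, after replacing \(\covector\) by \(\covector\circ\Ad{\gamma(1)}\)) characterization.
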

An important notion for curves in Carnot groups is the horizontal lift. This allows transferring curves between different Carnot groups of equal rank.
\begin{definition}\label{definition-projection}
Let \(G\) and \(F\) be two Carnot groups and \(\mathfrak{g}\) and \(\mathfrak{f}\) their Lie algebras. A Lie group homomorphism \(\pi\colon F\to G\) is a \terminology{Carnot group homomorphism} if \(\pi_*(\lielayer{\mathfrak{f}}{1})\subset \lielayer{\mathfrak{g}}{1}\).
\end{definition}
\begin{definition}\label{definition-lifted-curve}
Let \(G\) and \(F\) be Carnot groups and let \(\pi\colon F\to G\) be a {Carnot group homomorphism}. Let \(\gamma\colon(a,b)\to G\) be a horizontal curve in \(G\). A curve \(\tilde{\gamma}\colon (a,b)\to F\) is called a \terminology{horizontal lift} of the curve \(\gamma\), if \(\tilde{\gamma}\) is horizontal in \(F\) and \(\pi\circ\tilde{\gamma}=\gamma\).
\end{definition}
An important fact is that the horizontal lifts of abnormal curves are themselves also abnormal curves, see \cite[Proposition~2.27]{LMOPV-2016-sard}.

The proof of the main result {Theorem~{\ref{theorem-every-ode-trajectory-is-abnormal}}} will involve manipulating the abstract abnormal polynomials \(\abnormalpolynomial{X}{\covector}(g) = \covector(\Ad{g}X)\) in coordinates as concrete multivariate polynomials of a polynomial ring \(\polyring{x_1,\ldots,x_n}\). Since the adjoint map \(\Ad{}\colon G\to \mathrm{GL}(\mathfrak{g})\) is a Lie group homomorphism, a convenient system of coordinates is given by \terminology{exponential coordinates of the second kind}. For a basis \(X_1,\ldots,X_n\) of the Lie algebra \(\mathfrak{g}\), the exponential coordinates of the second kind are
\begin{equation*}
\RR^n\to G,\quad (x_1,\ldots,x_n)\mapsto \exp(x_nX_n)\cdots\exp(x_1X_1)\text{.}
\end{equation*}

\begin{lemma}\label{lemma-abnormal-polynomials-in-exponential-coordinates}
In exponential coordinates of the second kind with respect to the basis \(X_1,\dots,X_n\), the {abnormal polynomial} \(\abnormalpolynomial{X}{\covector}\) has the formula
\begin{equation*}
\abnormalpolynomial{X}{\covector}(x_1,\dots,x_n) = \sum_{i_1,\ldots,i_n}\frac{1}{i_1!\cdots i_n!}\covector\Big(\ad{X_{n}}^{i_n}\cdots \ad{X_{1}}^{i_1}X\Big)x_1^{i_1}\ldots x_n^{i_n}
\end{equation*}

\end{lemma}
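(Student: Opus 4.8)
The plan is to reduce the formula to the standard compatibility between the adjoint representation of the group \(G\) and that of its Lie algebra \(\mathfrak{g}\). First I would recall the classical identity \(\Ad{\exp(Y)} = e^{\ad{Y}}\) for every \(Y\in\mathfrak{g}\), which expresses the adjoint of a one-parameter subgroup as the exponential of the corresponding derivation. Since \(G\) is a Carnot group, \(\mathfrak{g}\) is nilpotent, so each \(\ad{Y}\) is a nilpotent operator and the exponential series \(e^{\ad{Y}} = \sum_{k\ge 0}\tfrac{1}{k!}\ad{Y}^{k}\) is a \emph{finite} sum. This is what guarantees that the resulting expression is genuinely a polynomial rather than a power series, and it also removes any convergence bookkeeping.

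Next I would exploit that \(\Ad{}\colon G\to \mathrm{GL}(\mathfrak{g})\) is a group homomorphism, as noted in {Definition~{\ref{definition-abnormal-polynomial}}}. Writing the point in exponential coordinates of the second kind as \(g = \exp(x_nX_n)\cdots\exp(x_1X_1)\), the homomorphism property factors the adjoint as
\begin{equation*}
\Ad{g} = \Ad{\exp(x_nX_n)}\circ\cdots\circ\Ad{\exp(x_1X_1)} = e^{x_n\ad{X_n}}\circ\cdots\circ e^{x_1\ad{X_1}}\text{.}
\end{equation*}
Applying this operator to the fixed vector \(X\) and expanding each finite exponential series \(e^{x_j\ad{X_j}} = \sum_{i_j}\tfrac{x_j^{i_j}}{i_j!}\ad{X_j}^{i_j}\), the product distributes into a single finite sum. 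The crucial point to verify carefully is the ordering of the noncommuting operators: because the rightmost factor \(e^{x_1\ad{X_1}}\) acts on \(X\) first, the composed action produces exactly the operator string \(\ad{X_n}^{i_n}\cdots\ad{X_1}^{i_1}\) applied to \(X\), matching the order in the claimed formula.

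Finally I would apply the linear functional \(\covector\) to both sides. Linearity of \(\covector\) moves it inside the finite sum and past the scalar coefficients, yielding
\begin{equation*}
\abnormalpolynomial{X}{\covector}(g) = \covector(\Ad{g}X) = \sum_{i_1,\ldots,i_n}\frac{1}{i_1!\cdots i_n!}\covector\Big(\ad{X_n}^{i_n}\cdots\ad{X_1}^{i_1}X\Big)x_1^{i_1}\cdots x_n^{i_n}\text{,}
\end{equation*}
which is the desired expression. I do not expect any serious obstacle here: the only genuine content is the identity \(\Ad{\exp(Y)} = e^{\ad{Y}}\) together with the homomorphism property, and the only place demanding care is tracking that the order of the \(\ad{X_j}\) operators is reversed relative to the order of the exponential factors defining the coordinates, so that the indices attach to the correct variables. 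Nilpotency of \(\mathfrak{g}\) ensures throughout that every series is a finite polynomial sum.
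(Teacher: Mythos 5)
Your proposal is correct and follows essentially the same route as the paper's proof: factor \(\Ad{g}\) via the homomorphism property, expand each factor with \(\Ad{\exp(x_jX_j)} = e^{\ad{x_jX_j}}\), and apply \(\covector\) by linearity. The extra care you take with operator ordering and with nilpotency guaranteeing finiteness is implicit in the paper but correctly handled here.
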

\begin{proof}\label{g:proof:idp5}
The claim follows directly by applying the two formulas
\begin{equation*}
\Ad{\exp(X_i)\exp(X_j)} = \Ad{\exp(X_i)}\Ad{\exp(X_j)}
\end{equation*}
and
\begin{equation*}
\Ad{\exp(x_jX_{j})} = e^{\ad{x_jX_{j}}} = \sum_i\frac{1}{i!}x_j^i\ad{X_{j}}^i
\end{equation*}
to expand the expression \(\abnormalpolynomial{X}{\covector}(x_1,\ldots,x_n) = \covector(\Ad{\exp(x_nX_n)\cdots\exp(x_1X_1)}X)\).
\end{proof}

\typeout{************************************************}
\typeout{Subsection 2.2 Hall sets}
\typeout{************************************************}

\subsection{Hall sets}\label{subsection-hall-sets}
The coordinate formula of {Lemma~{\ref{lemma-abnormal-polynomials-in-exponential-coordinates}}} for the {abnormal polynomials} depends on the choice of the basis \(X_1,\ldots,X_n\) of the Lie algebra. In the case of free Lie algebras, possible bases are well understood in terms of \terminology{Hall sets}, see the original construction in \cite{Hall-1950-basis_for_free_lie_rings}. The following definition is the one in \cite[Section~4.1]{Reutenauer-1993-free_lie_algebras}, but with the trees structured as in \cite[Ch II, §2.10]{Bourbaki-1975-lie_chapter_1-3}.
\begin{definition}\label{definition-hall-set}
\label{g:notation:idp6}
\label{g:notation:idp7}
Let \(\mathcal{A}\) be a finite set and let \(M(\mathcal{A})\) be the free magma on \(\mathcal{A}\), i.e.\@, the set of all binary trees with leaves in \(\mathcal{A}\). For \(h_1,h_2\in M(\mathcal{A})\), denote by \((h_1,h_2)\in M(\mathcal{A})\) the binary tree whose left subtree is \(h_1\) and right subtree is \(h_2\).

A \terminology{Hall set} on \(\mathcal{A}\) is a totally ordered set \(\hallset\subset M(\mathcal{A})\) with the following properties:
\begin{enumerate}[label=(\roman*)]
\item\label{hall-def-singletons}All elements \(a\in \mathcal{A}\) are in \(\hallset\).
\item\label{hall-def-pairs}Every tree \(h=(h_1,h_2)\in \hallset\) satisfies \(h_1,h_2\in\hallset\) and \(h_1\lt h_2\) and \(h_1\lt h\).
\item\label{hall-def-triples}Every tree \((h_1,(h_{21},h_{22}))\in \hallset\) satisfies \(h_{21}\leq h_1\).
\item\label{hall-def-completness}Every tree \(h\in M(\mathcal{A})\) with the properties of \ref{hall-def-pairs} and \ref{hall-def-triples} is contained in \(\hallset\).
\end{enumerate}
The elements of \(\hallset\) are called \terminology{Hall trees}. The \terminology{degree} \(\deg(h)\) of a tree \(h\) is its number of leaves.
\end{definition}
\begin{remark}\label{remark-hall-order}
The order constraint \(h_1\lt h\) in \ref{hall-def-pairs} is sometimes imposed in the stronger form that \(\deg(h')\lt \deg(h)\) implies that \(h'\lt h\) for all \(h,h'\in\hallset\). This stronger condition is used in much of the older literature, such as the original construction of \cite{Hall-1950-basis_for_free_lie_rings} and in \cite{Bourbaki-1975-lie_chapter_1-3}. See the discussion in \cite[Section~4.5]{Reutenauer-1993-free_lie_algebras} for the history and reasons behind these different conditions.
\end{remark}
Viewing the elements \(\mathcal{A}\) as the generators of a free Lie algebra, each Hall tree specifies some iterated Lie bracket of the generators and hence an element of the free Lie algebra.  By \cite[Theorem~4.9]{Reutenauer-1993-free_lie_algebras}, any Hall set on \(\mathcal{A}\) defines a basis of the free Lie algebra on \(\mathcal{A}\).

Fixing a total order on the free magma \(M(\mathcal{A})\), a Hall set can be defined by a recursively enumerating the trees satisfying the conditions of {Definition~{\ref{definition-hall-set}}}, see \cite[Proposition~4.1]{Reutenauer-1993-free_lie_algebras}. The only requirement on the total order on the free magma is that \(h_1\lt h\) for any tree \(h=(h_1,h_2)\). For the purposes of this article, the order will also need to be compatible with the degree. The following recursively defined order will be used in the rest of the paper.
\begin{definition}\label{definition-deg-left-right-hall-set}
Let \((\mathcal{A},\lt)\) be a totally ordered set and let \(M(\mathcal{A})\) be the set of binary trees with leaves in \(\mathcal{A}\). The \terminology{deg-left-right order} on \(M(\mathcal{A})\) is the total order extending \(\lt\) such that \(h\lt h'\) for \(h,h'\in M(\mathcal{A})\) if any of the following hold:
\begin{enumerate}[label=(\roman*)]
\item{}\(\deg(h)\lt\deg(h')\),
\item{}\(\deg(h)=\deg(h')\) and \(h_1\lt h_1'\), or
\item{}\(\deg(h)=\deg(h')\) and \(h_1=h_1'\) and \(h_2\lt h_2'\),
\end{enumerate}
where
\begin{equation*}
h=\begin{cases}(h_1,h_2),\amp \text{if }\deg(h)\geq 2\\h_1,\amp \text{if }\deg(h)=1\end{cases}
\end{equation*}
and \(h_1',h_2'\) are similarly the left and right subtrees for \(h'\).

The \terminology{deg-left-right Hall set} for the free Lie algebra of rank \(r\) is the {Hall set} constructed with the deg-left-right order on \(M(\{1,2,\dots,r\})\), where \(1\lt 2\lt\cdots \lt r\) is ordered in the standard way.
\end{definition}
\label{g:notation:idp8} Each {Hall tree} \(h\in\hallset\subset M(\mathcal{A})\) has a corresponding \terminology{Hall word} \(w(h)\), which is a word with letters in \(A\). The Hall words are defined recursively for \(h=(h_1,h_2)\) by concatenations \(w(h) = w(h_1)w(h_2)\) starting from \(w(a)=a\) for each \(a\in \mathcal{A}\). The Hall words are in one to one correspondence with Hall trees, see \cite[Corollary~4.5]{Reutenauer-1993-free_lie_algebras}. This correspondence defines the deg-left-right order on the deg-left-right Hall words.
\begin{lemma}\label{lemma-hall-words-for-deg-left-right-order}
Let \(\hallset\) be any {Hall set} such that the order is compatible with the degree, i.e.\@, such that \(\deg(h)\lt \deg(h')\) implies \(h\lt h'\). Then the Hall words of degree \(\geq 2\) are exactly the words that factor as
\begin{equation*}
(w_{k})^{i_k}(w_{k-1})^{i_{k-1}}\cdots (w_{1})^{i_1}w\text{,}
\end{equation*}
where
\begin{enumerate}[label=(\roman*)]
\item\label{enum-deg-left-rightwords-exponents}\(i_1,\ldots,i_k\in \ZZ_+\) are arbitrary exponents, and
\item\label{enum-deg-left-rightwords-increasing-sequence}\(w_{1}\lt w_{2}\lt \cdots \lt w_{k}\lt w\) are Hall words of lower degree.
\item\label{enum-deg-left-rightwords-final-bracket}either \(\deg(w) = 1\) or there are Hall words \(w',w''\) such that \(w=w'w''\), \(w'\lt w''\) and \(w_{1}\geq w'\).
\end{enumerate}
Moreover, the {Hall tree} for the Hall word \((w_{k})^{i_k}(w_{k-1})^{i_{k-1}}\cdots (w_{1})^{i_1}w\) describes the Lie bracket
\begin{equation*}
\ad{X_{w_{k}}}^{i_k}\cdots \ad{X_{w_{1}}}^{i_1}X_{w}\text{.}
\end{equation*}

\end{lemma}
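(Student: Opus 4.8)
The plan is to work entirely on the level of Hall trees, transporting statements back to words through the degree-preserving bijection $h\mapsto w(h)$ between Hall trees and Hall words and the induced identification of the two total orders (\cite[Corollary~4.5]{Reutenauer-1993-free_lie_algebras}). Writing $T_v$ for the Hall tree of a Hall word $v$, a factorization $(w_{k})^{i_k}\cdots(w_{1})^{i_1}w$ of a word into Hall factors corresponds, via $w((h_1,h_2))=w(h_1)w(h_2)$ and the fact that $(A,B)$ represents $[X_A,X_B]=\ad{X_A}X_B$, to reading off the left subtrees along the right spine of the tree representing $\ad{X_{w_{k}}}^{i_k}\cdots\ad{X_{w_{1}}}^{i_1}X_w$. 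I would thus prove the two inclusions of the asserted equality of sets separately, both by induction, using throughout that the order is compatible with degree, so that $h_1\lt h$ holds automatically for every tree $h=(h_1,h_2)$ because the left subtree has strictly fewer leaves.

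For the inclusion that every such word is a Hall word, suppose $w_1\lt\cdots\lt w_k\lt w$ together with exponents and factor $w$ satisfy \ref{enum-deg-left-rightwords-exponents}, \ref{enum-deg-left-rightwords-increasing-sequence}, \ref{enum-deg-left-rightwords-final-bracket}. I would build the tree for $\ad{X_{w_{k}}}^{i_k}\cdots\ad{X_{w_{1}}}^{i_1}X_w$ from the inside out, starting from the Hall tree $T_w$ and at each step placing $(T_{w_{j}},\cdot)$ on top of the current tree, checking that every intermediate tree lies in $\hallset$ by verifying \ref{hall-def-pairs} and \ref{hall-def-triples} of \ref{definition-hall-set} and invoking completeness \ref{hall-def-completness}. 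The subtrees lie in $\hallset$ by construction, the inequality $h_1\lt h$ is automatic, and the remaining part of \ref{hall-def-pairs} reduces to $T_{w_{j}}\lt(\text{current tree})$, which holds by degree compatibility since the current tree always contains $X_w$ and hence has more leaves than $T_{w_{j}}$ (using $w_j\le w$, itself a consequence of $w_j\lt w$ and degree compatibility). Condition \ref{hall-def-triples} asks that the left subtree of the current tree be $\le T_{w_{j}}$: when $\ad{X_{w_{j}}}$ is repeated this is $T_{w_{j}}\le T_{w_{j}}$, when the left factor has just increased it is $T_{w_{j-1}}\le T_{w_{j}}$, and at the first step $j=1$ with $\deg w\ge 2$ it is exactly the hypothesis $w_1\ge w'$ of \ref{enum-deg-left-rightwords-final-bracket} (and it is vacuous when $\deg w=1$).

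For the reverse inclusion I would argue by induction on the degree of a Hall tree $h$. If $h=(h_1,h_2)$ with $h_2$ a leaf, then $h=\ad{X_{h_1}}X_{h_2}$ already has the required form with $\deg w=1$. Otherwise $\deg h_2\ge 2$, and applying the induction hypothesis to the Hall tree $h_2$ yields a factorization $\ad{X_{w_{k}}}^{i_k}\cdots X_w$ of its word with $w_1\lt\cdots\lt w_k\lt w$. Condition \ref{hall-def-triples} applied to $h=(h_1,(h_{21},h_{22}))$ gives $h_{21}=T_{w_{k}}\le h_1$, i.e.\ $w_k\le h_1$, leaving three cases. If $h_1=w_k$, I absorb the new bracket into the leading power, replacing $i_k$ by $i_k+1$. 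If $w_k\lt h_1\lt w$, I prepend a new factor $w_{k+1}:=h_1$, preserving both the strict chain and $w_{k+1}\lt w$. If $h_1\ge w$, the previous innermost factor can no longer serve, and I would instead take the whole right subtree as the new innermost factor, writing $h=\ad{X_{h_1}}X_{h_2}$ with $w:=h_2$; here $h_1\lt h_2$ is \ref{hall-def-pairs} and \ref{enum-deg-left-rightwords-final-bracket} for the new factorization follows from \ref{hall-def-pairs} and \ref{hall-def-triples} applied to $h$. In each case the resulting data satisfy \ref{enum-deg-left-rightwords-exponents}, \ref{enum-deg-left-rightwords-increasing-sequence}, \ref{enum-deg-left-rightwords-final-bracket}, and the tree is the asserted bracket, which also settles the final ``moreover'' assertion.

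The order bookkeeping in both directions is routine; the one genuinely delicate point, where I expect to spend the most care, is the reverse inclusion, specifically the realization that one must not peel the right spine all the way down to its terminal leaf. Condition \ref{hall-def-triples} permits peeling at every stage, so the naive choice would always produce $\deg w=1$ and would violate the requirement $w_k\lt w$ of \ref{enum-deg-left-rightwords-increasing-sequence} whenever the largest left factor has degree greater than one. The correct innermost factor $w$ is the deepest right subtree that still strictly exceeds every peeled left factor, which is precisely what the case $h_1\ge w$ detects; getting this stopping rule right, and matching it against \ref{enum-deg-left-rightwords-final-bracket}, is the crux of the argument.
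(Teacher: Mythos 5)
Your proof is correct, and for the substantive direction---that every word admitting a factorization satisfying \ref{enum-deg-left-rightwords-exponents}--\ref{enum-deg-left-rightwords-final-bracket} is a Hall word---your inside-out construction of the tree, verifying \ref{hall-def-pairs} and \ref{hall-def-triples} of Definition~\ref{definition-hall-set} at each wrapping step, is essentially the paper's induction on the partial trees \(\bar{h}_i\). (One small repair: at the very first wrap the right subtree is \(T_w\) itself, which need not have strictly more leaves than \(T_{w_1}\) when \(\deg(w_1)=\deg(w)\), so the inequality \(T_{w_1}<T_w\) must be taken directly from hypothesis \ref{enum-deg-left-rightwords-increasing-sequence} rather than from degree compatibility; the paper does exactly this.) Where you genuinely diverge is the other inclusion, which you single out as the crux and prove by a right-spine peeling induction with a stopping rule. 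The paper dispatches that direction in two sentences: for any Hall tree \(h=(h_1,h_2)\) the one-step factorization \(w(h)=w(h_1)w(h_2)\) already satisfies all three conditions with \(k=1\), \(i_1=1\), \(w_1=w(h_1)\), \(w=w(h_2)\), since condition \ref{enum-deg-left-rightwords-increasing-sequence} is just \(h_1<h_2\) and condition \ref{enum-deg-left-rightwords-final-bracket} is exactly \(h_{21}\le h_1\) from \ref{hall-def-triples}. The lemma only asserts the existence of some admissible factorization, so the delicate stopping rule you describe is not needed; what your extra work buys is the maximal factorization of Example~\ref{example-rank-2-deg-left-right-hall-set}, which the lemma itself never claims. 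Your case analysis for that stronger statement does appear to be sound, but you have misplaced the difficulty: the direction the paper treats as substantive is the one you (correctly) handle first.
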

\begin{proof}\label{g:proof:idp9}
The claim that the Hall words factor as in the statement is straightforward. If \(h=(h_1,h_2)\) is a Hall tree of degree \(\geq 2\), then by {Definition~{\ref{definition-hall-set}}} \(h_1\lt h_2\) and if \(h_2=(h_{21},h_{22})\), then \(h_1\geq h_{21}\). Then \(w(h)\) admits the factorization \(w(h) = w(h_1)w(h_2)\) and \(\ad{X_{w(h_1)}}X_{w(h_2)}\) is the corresponding Lie bracket.

For the converse claim, let \(h\) be the binary tree described by a Lie bracket \(\ad{X_{w_{k}}}^{i_k}\cdots \ad{X_{w_{1}}}^{i_1}X_{w}\) with the words \(w_1,\ldots,w_k\), and \(w\) satisfying the conditions \ref{enum-deg-left-rightwords-exponents}\textendash{}\ref{enum-deg-left-rightwords-final-bracket}. Write the \(\ad{}\)-sequence without exponents as
\begin{equation*}
\ad{X_{w_{k}}}^{i_k}\cdots \ad{X_{w_{1}}}^{i_1}X_{w} = \ad{Y_{n}}\cdots\ad{Y_{1}}X_{w}\text{,}
\end{equation*}
where \(n=i_1+\cdots+i_k\), \(Y_1=\ldots=Y_{i_1}=X_{w_{1}}\) and so on. For each \(i=1,\ldots,k\), let \(h_i\) be the Hall tree corresponding to the Lie algebra element \(Y_i\) and let \(h_0\) be the Hall tree for the element \(X_{w}\), so that \(h=(h_n,(h_{n-1},(\ldots,(h_1,h_0)\ldots)))\). The claim that \(h\) is a Hall tree will follow by induction showing that every tree \(\bar{h}_i:=(h_i,(\ldots(h_1,h_0)\ldots))\) is a Hall tree. The case \(i=0\) holds by the choice of \(h_0=\bar{h}_0\).

In the case \(i=1\), assumption \ref{enum-deg-left-rightwords-increasing-sequence} implies that \(h_1\lt h_0\). If \(\deg(w)\gt 1\), let \(h'\) and \(h''\) be the Hall trees corresponding to the Hall words \(w'\) and \(w''\), so that \(h_0=(h',h'')\). Then assumption \ref{enum-deg-left-rightwords-final-bracket} implies that \(h_1\geq h'\). By assumption the order is compatible with the degree, so \(h_1\lt (h_1,h_0)=\bar{h}_1\). Hence the tree \(\bar{h}_1=(h_1,h_0)\) satisfies all the order constraints of {Definition~{\ref{definition-hall-set}}} and is a Hall tree.

Suppose the claim holds up to some \(i\geq 1\). The tree \(\bar{h}_{i+1} = (h_{i+1},(h_i,\bar{h}_{i-1}))\) then consists of Hall trees.  The degree first comparison guarantees that \(h_{i+1}\lt \bar{h}_{i}\lt \bar{h}_{i+1}\). Combined with assumption \ref{enum-deg-left-rightwords-increasing-sequence}, this guarantees that all of the order constraints \(h_{i+1}\lt (h_i,\bar{h}_{i-1})\), \(h_{i+1}\lt \bar{h}_{i+1}\), and \(h_i\leq h_{i+1}\) are satisfied, so \(\bar{h}_{i+1}\) is a Hall tree. By induction it follows that the tree \(h=\bar{h}_n\) is a Hall tree, proving the claim.
\end{proof}
\begin{example}\label{example-rank-2-deg-left-right-hall-set}
In rank 2, the deg-left-right Hall words up to degree 6 are
\begin{align*}
1\amp\lt 2\lt 12\lt 112\lt 212\lt 1112\lt 2112\lt 2212\\
\amp\lt 11112\lt 21112\lt 22112\lt 22212\lt 12112\lt 12212\\
\amp\lt 111112\lt 211112\lt 221112\lt 222112\lt 222212\\
\amp\lt 121112\lt 122112\lt 122212\lt 112212\text{.}
\end{align*}
For the words of degree 6, the longest possible factorizations in the form of {Lemma~{\ref{lemma-hall-words-for-deg-left-right-order}}} are
\begin{align*}
111112 \amp= (1)^5(2)\amp
222112 \amp= (2)^3(1)(12)\amp
122112 \amp= (12)(2)(112)\\
211112 \amp= (2)(1)^3(12)\amp
222212 \amp= (2)^4(12)\amp
122212 \amp= (12)(2)(212)\\
221112 \amp= (2)^2(1)^2(12)\amp
121112 \amp= (12)(1)(112)\amp
112212 \amp= (112)(212)\text{.}
\end{align*}

\end{example}

\typeout{************************************************}
\typeout{Subsection 2.3 Polynomials in free Lie algebras}
\typeout{************************************************}

\subsection{Polynomials in free Lie algebras}\label{subsection-polynomials-in-free-lie-algebras}
\label{g:notation:idp10}
In the proof of {Theorem~{\ref{theorem-every-ode-trajectory-is-abnormal}}} it will be convenient to consider {abnormal polynomials} not as polynomials in a single Carnot group \(G\), but as abstract multivariate polynomials that admit realizations as abnormal polynomials in several Carnot groups. Fixing a basis \(X_1,\ldots,X_n\) of the Lie algebra \(\mathfrak{g}\), the coordinate formula of {Lemma~{\ref{lemma-abnormal-polynomials-in-exponential-coordinates}}} realizes the identification of abnormal polynomials as multivariate polynomials \(\abnormalpolynomial{}{}\in \polyring{x_1,\ldots,x_n}\). Fixing a {Hall set} \(\hallset=\{w_{1},w_{2},\ldots\}\) for the free Lie algebra \(\freelie{r}\) of rank \(r\) then allows viewing multivariate polynomials \(\abnormalpolynomial{}{}\in\polyring{x_{w_{1}},x_{w_{2}},\ldots}\) as polynomials in several Carnot groups of rank \(r\) at once, as will be described next.
\begin{definition}\label{definition-carnot-group-compatible-with-hall-basis}
Let \(\hallset\) be a {Hall set} for the free Lie algebra \(\freelie{r}\) of rank \(r\). Let \(G\) be a Carnot group of rank \(r\), so its Lie algebra is the quotient \(\mathfrak{g}=\freelie{r}/I\) of the free Lie algebra \(\freelie{r}\) by some ideal \(I\subset\freelie{r}\). The Carnot group \(G\) is said to be \terminology{compatible with the Hall set \(\hallset\)}, if the ideal \(I\) has a basis consisting of elements of the Hall set.
\end{definition}
\begin{definition}\label{definition-hall-exponential-coordinates}
Let \(\hallset\) be a {Hall set} for a free Lie algebra \(\freelie{r}\) and let \(G\) be a Carnot group of rank \(r\) that is {compatible} with the Hall set \(\hallset\). Let \(w_{1},\dots,w_{n}\) be the complementary Hall words to the ideal defining \(\mathfrak{g}\), so that \(\mathfrak{g}=\operatorname{span}\{X_{w_{1}},\dots,X_{w_{n}}\}\). The exponential coordinates of the second kind
\begin{equation*}
\RR^n\to G,\quad (x_1,\dots,x_n) \mapsto \exp(x_nX_{w_{n}})\cdots\exp(x_1X_{w_{1}})
\end{equation*}
are said to be \terminology{adapted to the Hall set} if \(w_{n}\gt\ldots\gt w_{1}\).
\end{definition}
\begin{definition}\label{definition-polynomial-in-free-lie-algebra}
\label{g:notation:idp11}
Let \(\hallset=\{w_{1},w_{2},\ldots\}\) be a {Hall set} for a free Lie algebra \(\freelie{r}\). Let \(\polyring{\hallset}:=\polyring{x_{w_{1}},x_{w_{2}},\ldots}\) be the weighted polynomial ring with a countable number of generators, where the weight of the variable \(x_{w_{i}}\) is the degree of the Hall word \(w_{i}\).
\end{definition}
The above definitions allow identifying each polynomial in a Carnot group compatible with a Hall set \(\hallset\) with a unique polynomial in \(\polyring{\hallset}\). {Abnormal polynomials} are not completely arbitrary polynomials however, so it will be relevant that the coordinate form of abnormal polynomials is preserved by lifting to higher step groups.
\begin{lemma}\label{lemma-lifting-quotient-abnormal-polynomials}
Let \(\mathfrak{f}\) be a stratified Lie algebra with a basis \(X_1,\ldots, X_n\). Let \(\mathfrak{g}\) be a stratified Lie algebra defined as a quotient of \(\mathfrak{f}\) by an ideal \(I = \operatorname{span}\{X_{m+1},\ldots,X_n\}\). Let \(F\) and \(G\) be Carnot groups with Lie algebras \(\mathfrak{f}\) and \(\mathfrak{g}\) respectively. Then for every vector \(X\in\mathfrak{g}\) and covector \(\covector\in \mathfrak{g}^*\) there exist lifts \(\tilde{X}\in\mathfrak{f}\) and \(\tilde{\covector}\in\mathfrak{f}^*\) such that the {abnormal polynomials} \(\abnormalpolynomial{X}{\covector}\) in \(G\) and \(\abnormalpolynomial{\tilde{X}}{\tilde{\covector}}\) in \(F\) have identical {coordinate expressions} in exponential coordinates of the second kind with respect to the bases \(X_1,\ldots,X_n\) and \(\pi(X_1),\ldots,\pi(X_m)\).
\end{lemma}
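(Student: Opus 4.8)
The plan is to write down explicit lifts and then show that the coordinate formula of {Lemma~{\ref{lemma-abnormal-polynomials-in-exponential-coordinates}}} collapses in the right way. Let $\pi\colon\mathfrak{f}\to\mathfrak{g}$ denote the quotient homomorphism, so that $\pi(X_j)=0$ for $j>m$ while $\pi(X_1),\ldots,\pi(X_m)$ is the prescribed basis of $\mathfrak{g}$. First I would choose the vector lift by expanding $X=\sum_{j=1}^m c_j\pi(X_j)$ in this basis and setting $\tilde{X}=\sum_{j=1}^m c_jX_j$, so that $\pi(\tilde{X})=X$ (any lift of $X$ would serve equally well). For the covector the natural choice is the pullback $\tilde{\covector}=\covector\circ\pi\in\mathfrak{f}^*$, i.e.\ the image of $\covector$ under the dual of the surjection $\pi$.

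Next I would apply {Lemma~{\ref{lemma-abnormal-polynomials-in-exponential-coordinates}}} to both polynomials. In the basis $\pi(X_1),\ldots,\pi(X_m)$ of $\mathfrak{g}$ the polynomial $\abnormalpolynomial{X}{\covector}$ in $G$ is
\[
\abnormalpolynomial{X}{\covector}(x_1,\ldots,x_m)=\sum_{i_1,\ldots,i_m}\frac{1}{i_1!\cdots i_m!}\covector\big(\ad{\pi(X_m)}^{i_m}\cdots\ad{\pi(X_1)}^{i_1}X\big)x_1^{i_1}\cdots x_m^{i_m},
\]
while in the basis $X_1,\ldots,X_n$ of $\mathfrak{f}$ the polynomial $\abnormalpolynomial{\tilde{X}}{\tilde{\covector}}$ in $F$ is the analogous sum over multi-indices $(i_1,\ldots,i_n)$ whose general term is $\frac{1}{i_1!\cdots i_n!}\tilde{\covector}\big(\ad{X_n}^{i_n}\cdots\ad{X_1}^{i_1}\tilde{X}\big)x_1^{i_1}\cdots x_n^{i_n}$. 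The crux is to evaluate these coefficients using that $\pi$ is a Lie algebra homomorphism, hence intertwines the adjoint actions as $\pi(\ad{X_j}Y)=\ad{\pi(X_j)}\pi(Y)$. Iterating this identity and using $\pi(\tilde{X})=X$ gives
\[
\tilde{\covector}\big(\ad{X_n}^{i_n}\cdots\ad{X_1}^{i_1}\tilde{X}\big)=\covector\big(\ad{\pi(X_n)}^{i_n}\cdots\ad{\pi(X_1)}^{i_1}X\big).
\]
Since $\pi(X_j)=0$, and therefore $\ad{\pi(X_j)}=0$, for every $j>m$, any multi-index with $i_j>0$ for some $j>m$ forces a vanishing factor. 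The only surviving terms are those with $i_{m+1}=\cdots=i_n=0$, for which the monomial $x_{m+1}^{i_{m+1}}\cdots x_n^{i_n}$ equals $1$; these reproduce the displayed expression for $\abnormalpolynomial{X}{\covector}$ term by term, yielding the claimed equality of coordinate expressions.

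I do not expect a serious obstacle: once the lifts $\tilde{X}$ and $\tilde{\covector}=\covector\circ\pi$ are fixed, the result is a direct consequence of the coordinate formula together with the homomorphism property of $\pi$. The only point that needs a little care is the bookkeeping confirming that every monomial genuinely involving one of the extra variables $x_{m+1},\ldots,x_n$ drops out, and this reduces to the single observation that $\ad{\pi(X_j)}=0$ for $j>m$. It is worth noting that the argument is purely an identity of formal polynomials extracted from {Lemma~{\ref{lemma-abnormal-polynomials-in-exponential-coordinates}}} and never invokes the group-level map, although it is of course consistent with the fact that in these coordinates $\pi$ acts as the projection $(x_1,\ldots,x_n)\mapsto(x_1,\ldots,x_m)$.
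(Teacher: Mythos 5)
Your proposal is correct and matches the paper's own proof essentially line for line: you choose the same lifts (the pullback $\tilde{\covector}=\pi^*\covector$ and the zero-extension $\tilde{X}=\sum_{j\le m}c_jX_j$), derive the same identity $\tilde{\covector}\big(\ad{X_n}^{i_n}\cdots\ad{X_1}^{i_1}\tilde{X}\big)=\covector\big(\ad{\pi(X_n)}^{i_n}\cdots\ad{\pi(X_1)}^{i_1}X\big)$ from the homomorphism property of $\pi$, and conclude by comparing coefficients in the formula of Lemma~\ref{lemma-abnormal-polynomials-in-exponential-coordinates}, with the monomials involving $x_{m+1},\ldots,x_n$ killed by $\ad{\pi(X_j)}=0$. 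No gaps.
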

\begin{proof}\label{g:proof:idp12}
The lift of the covector is the pullback \(\tilde{\covector}:=\pi^*\covector\). The vector \(X\) is lifted by setting the coefficients of the elements \(X_{m+1},\ldots,X_n\) to zero. That is, if \(X = \sum_{i=1}^{m}x_i\pi(X_i)\), then the lift is \(\tilde{X} = \sum_{i=1}^{m}x_iX_i\).

The projection \(\pi\colon\mathfrak{f}\to\mathfrak{g}\) is a Lie algebra homomorphism, so the definitions of the lifts imply that
\begin{equation*}
\tilde{\covector}\Big(\ad{X_{n}}^{i_n}\cdots \ad{X_{1}}^{i_1}\tilde{X}\Big)
= \covector\Big(\ad{\pi(X_{n})}^{i_n}\cdots \ad{\pi(X_{1})}^{i_1}X\Big)\text{.}
\end{equation*}
Since \(\pi(X_{n})=\ldots=\pi(X_{m+1})=0\), the formula of {Lemma~{\ref{lemma-abnormal-polynomials-in-exponential-coordinates}}} implies that the abnormal polynomial \(\abnormalpolynomial{\tilde{X}}{\tilde{\covector}}\) in \(G\) only contains monomials with variables \(x_1,\ldots,x_m\). Moreover by the same formula, the coefficient of each monomial containing only variables \(x_1,\ldots,x_m\) is exactly the coefficient of the same monomial in the abnormal polynomial \(\abnormalpolynomial{X}{\covector}\) in \(H\).
\end{proof}
In a Carnot group \(G\), there is a natural action \(\mathfrak{g}\acts C^\infty(G)\) of the Lie algebra \(\mathfrak{g}\) on the space of smooth functions \(G\to\RR\), defined by extending a vector \(X\in\mathfrak{g}\) to a left-invariant vector field \(\tilde{X}(g) = (L_{g})_*X\) and using the action of vector fields as derivations on the space of smooth functions. In coordinates this defines an action \(\mathfrak{g}\acts \polyring{x_1,\ldots,x_n}\) by derivations also on the space of polynomials.
\begin{definition}\label{definition-free-lie-action-on-polynomials}
\label{g:notation:idp13}
Let \(\hallset\) be a {Hall set} for a free Lie algebra \(\freelie{r}\). Define an action \(\freelieaction\colon \freelie{r}\acts\polyring{\hallset}\) via derivations by extending all the actions \(\mathfrak{g}\acts \polyring{x_1,\ldots,x_n}\) as follows:

Let \(X\in\freelie{r}\) be a vector and \(P\in\polyring{\hallset}\) a polynomial. Let \(x_{w_{1}},\ldots,x_{w_{k}}\) be all the variables of \(P\) and let \(G\) be any Carnot group {compatible} with the Hall set \(\hallset\) such that none of the basis elements \(X_{w_{1}},\ldots,X_{w_{k}}\) are quotiented away. Fix exponential coordinates \(\RR^n\to G\) {adapted} to the Hall set \(\hallset\), and let \(\iota\colon\polyring{x_1,\ldots,x_n}\into \polyring{\hallset}\) be the identification of polynomials in \(G\) in coordinates with polynomials in \(\polyring{\hallset}\). The action \(\freelieaction\colon \freelie{r}\acts\polyring{\hallset}\) is defined by
\begin{equation*}
\freelieaction\colon\freelie{r}\times\polyring{\hallset}\to\polyring{\hallset},\quad \freelieaction(X,P) := \iota\Big(\pi(X)\iota^{-1}(P)\Big)\text{,}
\end{equation*}
where \(\pi\colon \freelie{r}\to\mathfrak{g}\) is the quotient projection, and \(\pi(X)\iota^{-1}(P)\) is defined by the action \(\mathfrak{g}\acts C^\infty(G)\).
\end{definition}
The well-posedness of {Definition~{\ref{definition-free-lie-action-on-polynomials}}} critically relies on the assumption that all the coordinates on the Carnot groups involved are adapted to a single fixed {Hall set}. Without this assumption, the coordinate expressions of the actions \(\mathfrak{g}\acts\polyring{x_1,\ldots,x_n}\) are in general incompatible with each other.

An extremely useful feature of the abnormal polynomials is how their behavior under the action \(\mathfrak{g}\acts\polyring{x_1,\ldots,x_n}\) is linked with the structure of the Lie algebra \(\mathfrak{g}\). The following result is a rephrasing of \cite[Theorem~1.1]{LLMV-2018-extremal_polynomials} and \cite[Proposition~2.22]{LMOPV-2016-sard} using the linearity of the abnormal polynomials \(\abnormalpolynomial{Y}{\covector}\) in the vector \(Y\).
\begin{lemma}\label{lemma-derivatives-of-abnormal-polynomials}
Let \(G\) be a Carnot group and \(\mathfrak{g}\) its Lie algebra. Then for every covector \(\covector\in\mathfrak{g}^*\) and all vectors \(X,Y\in\mathfrak{g}\), the \(X\)-derivative of the {abnormal polynomial} \(\abnormalpolynomial{Y}{\covector}\) is the abnormal polynomial \(X\abnormalpolynomial{Y}{\covector} = \abnormalpolynomial{[X,Y]}{\covector}\).
\end{lemma}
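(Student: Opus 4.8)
The plan is to compute the derivative directly from the definitions, unwinding both the left-invariant vector field action and the definition of the abnormal polynomial. First I would recall that the derivation action of a vector \(X\in\mathfrak{g}\) on a smooth function \(f\) is obtained by differentiating along the flow of the left-invariant extension of \(X\); since the integral curve of this field through \(g\) is the right translate \(t\mapsto g\exp(tX)\), the action is \((Xf)(g) = \left.\frac{d}{dt}\right|_{t=0} f(g\exp(tX))\). The computation then reduces to evaluating this for \(f=\abnormalpolynomial{Y}{\covector}\).

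Substituting \(\abnormalpolynomial{Y}{\covector}(g) = \covector(\Ad{g}Y)\), the key step is to use that \(\Ad{}\) is a group homomorphism, so that \(\Ad{g\exp(tX)} = \Ad{g}\Ad{\exp(tX)}\). Applying the standard identity \(\Ad{\exp(tX)} = e^{t\ad{X}} = \mathrm{id} + t\ad{X} + O(t^2)\) already used in the proof of {Lemma~{\ref{lemma-abnormal-polynomials-in-exponential-coordinates}}}, and exploiting the linearity of \(\covector\) and of \(\Ad{g}\), differentiation at \(t=0\) isolates exactly the term \(\covector(\Ad{g}\ad{X}Y) = \covector(\Ad{g}[X,Y])\), which is \(\abnormalpolynomial{[X,Y]}{\covector}(g)\). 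This is the claimed identity.

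There is no genuine obstacle here: the whole statement is a one-line computation once the conventions are fixed. The only points requiring care are bookkeeping. One must confirm that the derivation action corresponds to right multiplication by \(\exp(tX)\), so that the homomorphism property of \(\Ad{}\) isolates the factor \(\Ad{\exp(tX)}\) on the correct side and leaves \(\Ad{g}\) untouched; and one must use the expansion \(\Ad{\exp(tX)}=e^{t\ad{X}}\) rather than any variant. Finally, the linearity of the abnormal polynomial \(\abnormalpolynomial{Y}{\covector}\) in its vector argument \(Y\)—the observation highlighted in the statement—is precisely what lets one repackage the resulting expression \(\covector(\Ad{g}[X,Y])\) as a single abnormal polynomial with vector \([X,Y]\).
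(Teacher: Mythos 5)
Your computation is correct and complete. Note that the paper does not actually supply a proof of this lemma: it states the result as a rephrasing of cited results from the literature (Theorem~1.1 of the extremal polynomials paper and Proposition~2.22 of the Sard paper), combined with the linearity of \(\abnormalpolynomial{Y}{\covector}\) in \(Y\). Your argument therefore fills in a self-contained proof rather than paralleling one in the text, and it is the natural one: the left-invariant extension \(\tilde{X}(g)=(L_g)_*X\) has integral curve \(t\mapsto g\exp(tX)\) through \(g\), so
\begin{equation*}
(X\abnormalpolynomial{Y}{\covector})(g)=\left.\frac{d}{dt}\right|_{t=0}\covector\big(\Ad{g\exp(tX)}Y\big)=\left.\frac{d}{dt}\right|_{t=0}\covector\big(\Ad{g}e^{t\ad{X}}Y\big)=\covector\big(\Ad{g}[X,Y]\big)=\abnormalpolynomial{[X,Y]}{\covector}(g)\text{,}
\end{equation*}
and the bookkeeping points you flag (the factor \(\Ad{\exp(tX)}\) appearing on the right because the derivation acts by right multiplication, and the expansion \(\Ad{\exp(tX)}=e^{t\ad{X}}\)) are exactly the ones that matter. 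The identification of \(\covector(\Ad{g}[X,Y])\) as a single abnormal polynomial uses the linearity in the vector argument, which is precisely the observation the paper highlights when stating the lemma. No gaps.
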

In the proof of {Theorem~{\ref{theorem-every-ode-trajectory-is-abnormal}}} it will be necessary to further simplify the monomial coefficients of abnormal polynomials given by the formula of {Lemma~{\ref{lemma-abnormal-polynomials-in-exponential-coordinates}}}. The following lemma is a criterion for when these coefficients are as simple as possible for the deg-left-right Hall set.
\begin{lemma}\label{lemma-free-monomials-of-abnormal-polynomials}
Let \(w\) be a {deg-left-right Hall word} with factorization \(w=w'w''\) and let \(x^I = x_{v_1}^{i_1}\ldots x_{v_\ell}^{i_\ell}\) be a monomial in \(\polyring{\hallset}\) such that \(v_1\lt\ldots\lt v_\ell\lt w\) and \(v_1\geq w'\). Then \(vw:=(v_\ell)^{i_\ell}\cdots (v_1)^{i_1}w\) is a deg-left-right Hall word and in exponential coordinates {adapted} to the deg-left-right order the coefficient of the monomial \(x^I\) in the {abnormal polynomial} \(\abnormalpolynomial{w}{\covector}\) is \(\frac{1}{i_1!\cdots i_\ell!}\covector_{vw}\).
\end{lemma}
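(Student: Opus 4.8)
The plan is to read the claimed coefficient straight off the coordinate formula of Lemma~\ref{lemma-abnormal-polynomials-in-exponential-coordinates}, once I have certified that $vw$ really is a Hall word so that the symbol $\covector_{vw}$ is meaningful. So the proof splits into a combinatorial step (recognizing the Hall word) and a direct substitution step (extracting the coefficient), with everything hinging on the alignment between the variable ordering and the nesting of $\ad$ operators.

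First I would verify the Hall word claim by feeding the word $(v_\ell)^{i_\ell}\cdots (v_1)^{i_1}w$ into the converse direction of Lemma~\ref{lemma-hall-words-for-deg-left-right-order}. The hypotheses supply all three conditions almost verbatim: the exponents $i_j$ are positive integers since $x^I$ is a genuine monomial; the chain $v_1 \lt \cdots \lt v_\ell \lt w$ of Hall words is exactly the increasing sequence required; and the Hall factorization $w=w'w''$ together with $v_1\geq w'$ gives the final condition, where $w'\lt w''$ holds automatically because $w=w'w''$ is the Hall factorization of $w$ (Definition~\ref{definition-hall-set}). No separate degree inequality between the $v_j$ and $w$ is needed, since ``of lower degree'' there refers to the full word $vw$ and is automatic for proper factors. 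As the deg-left-right order is compatible with degree, Lemma~\ref{lemma-hall-words-for-deg-left-right-order} then certifies simultaneously that $vw$ is a deg-left-right Hall word and that its basis element is the iterated bracket $X_{vw} = \ad{X_{v_\ell}}^{i_\ell}\cdots\ad{X_{v_1}}^{i_1}X_w$.

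Then I would extract the coefficient. Working in any Carnot group compatible with $\hallset$ in which none of $X_{v_1},\ldots,X_{v_\ell},X_w,X_{vw}$ are quotiented away (for instance a free Carnot group of sufficiently high step, using Lemma~\ref{lemma-lifting-quotient-abnormal-polynomials} to know the coefficient is the one recorded in $\polyring{\hallset}$), I apply Lemma~\ref{lemma-abnormal-polynomials-in-exponential-coordinates} to $\abnormalpolynomial{w}{\covector}=\abnormalpolynomial{X_w}{\covector}$. Because the coordinates are adapted to the deg-left-right order, a larger Hall word carries a larger basis index, hence an outer $\ad$ factor, so the monomial $x^I = x_{v_1}^{i_1}\cdots x_{v_\ell}^{i_\ell}$ selects precisely the coefficient $\frac{1}{i_1!\cdots i_\ell!}\covector\big(\ad{X_{v_\ell}}^{i_\ell}\cdots\ad{X_{v_1}}^{i_1}X_w\big)$. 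Substituting the bracket identification from the first step collapses this to $\frac{1}{i_1!\cdots i_\ell!}\covector(X_{vw})=\frac{1}{i_1!\cdots i_\ell!}\covector_{vw}$, which is the assertion.

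The one place needing genuine care, and the only real obstacle, is the orientation bookkeeping in the second step: I must confirm that ``adapted to the deg-left-right order'' pairs the variable ordering with the $\ad$-nesting so that the bracket emerges in exactly the left-nested shape $h=(h_n,(\ldots,(h_1,h_0)\ldots))$ recorded by the Hall tree of $vw$ in the proof of Lemma~\ref{lemma-hall-words-for-deg-left-right-order}, rather than in a reversed order that would force a Jacobi re-expansion and spoil the single clean coefficient. Tracking the correspondence ``larger word $\leftrightarrow$ larger index $\leftrightarrow$ outermost $\ad$'' settles this; the rest is a direct substitution.
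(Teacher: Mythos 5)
Your proposal is correct and follows essentially the same route as the paper: invoke the converse direction of Lemma~\ref{lemma-hall-words-for-deg-left-right-order} to certify that \(vw\) is a Hall word whose basis element is the bracket \(\ad{X_{v_\ell}}^{i_\ell}\cdots\ad{X_{v_1}}^{i_1}X_w\), then read the coefficient \(\frac{1}{i_1!\cdots i_\ell!}\covector(X_{vw})\) off the explicit formula of Lemma~\ref{lemma-abnormal-polynomials-in-exponential-coordinates}. The extra care you take about the orientation of the \(\ad\)-nesting versus the variable ordering in adapted coordinates is exactly the point the paper's proof uses implicitly, so nothing is missing.
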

\begin{proof}\label{g:proof:idp14}
\label{g:notation:idp15} By the explicit formula of {Lemma~{\ref{lemma-abnormal-polynomials-in-exponential-coordinates}}} the coefficient of the monomial \(x^I\) in the abnormal polynomial \(\abnormalpolynomial{w}{\covector}\) is
\begin{equation*}
\frac{1}{i_1!\cdots i_\ell!}\covector\Big(\ad{X_{v_\ell}}^{i_\ell}\cdots \ad{X_{v_1}}^{i_1}X_{w}\Big)\text{.}
\end{equation*}
By {Lemma~{\ref{lemma-hall-words-for-deg-left-right-order}}}, the assumption that \(v_1\lt\ldots\lt v_\ell\lt w\) and \(v_1\geq w'\) guarantees that \(vw\) is a deg-left-right Hall word. The second part of the lemma states the bracket \(\ad{X_{v_\ell}}^{i_\ell}\cdots \ad{X_{v_1}}^{i_1}X_{w}\) is exactly the Hall basis element \(X_{vw}\), so
\begin{equation*}
\covector\Big(\ad{X_{v_\ell}}^{i_\ell}\cdots \ad{X_{v_1}}^{i_1}X_{w}\Big) = \covector(X_{vw}) = \covector_{vw}\text{.}\qedhere
\end{equation*}

\end{proof}

\typeout{************************************************}
\typeout{Subsection 2.4 Poincaré series}
\typeout{************************************************}

\subsection{Poincaré series}\label{subsection-poincare-series}
The proof of {Theorem~{\ref{theorem-every-ode-trajectory-is-abnormal}}} involves counting the number of monomials of a given degree depending on some subset of the variables \(x_{w}\) in \(\polyring{\hallset}\).  A convenient form to manipulate these quantities will be in terms of generating functions, i.e.\@, in terms of Poincaré series.
\begin{definition}\label{definition-poincare-series}
Let \(W = \bigoplus_{i=0}^\infty W_i\) be a graded vector space with all components \(W_i\) finite dimensional. The \terminology{Poincaré series} of the graded vector space \(W\) is the formal power series
\begin{equation*}
\poincare(t) = \sum_{k=0}^\infty (\dim W_k)t^k\text{.}
\end{equation*}

\end{definition}
The Poincaré series of a weighted polynomial ring has a rather simple expression as a rational function depending on the weights.
\begin{lemma}\label{lemma-poincare-series-for-monomials-of-bounded-degree}
Let \(r\in\NN\) and \(d\in\NN\). Let \(w_{1}\lt\dots\lt w_{n}\) be all the {Hall words} of degree up to \(d\) in the free Lie algebra \(\freelie{r}\) of rank \(r\), and let \(d_{k}:=\deg(w_{k})\). The Poincaré series for the weighted polynomial ring \(\polyring{x_{w_{1}},\ldots,x_{w_{n}}}\) with weights \(d_{1},\ldots,d_{n}\) is given by the rational function
\begin{equation*}
\poincare(t) = 1/\prod_{k=1}^{n} (1-t^{d_{k}}) = 1/\prod_{k=1}^d (1-t^k)^{\dim\freelielayer{r}{k}}\text{.}
\end{equation*}

\end{lemma}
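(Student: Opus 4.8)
The plan is to compute the Poincaré series by exploiting the fact that a weighted polynomial ring splits, as a graded vector space, into a tensor product of its single-variable subrings. First I would record that $\polyring{x_{w_{1}},\ldots,x_{w_{n}}}$ is indeed a legitimate graded vector space in the sense of {Definition~{\ref{definition-poincare-series}}}: since there are finitely many generators, each of strictly positive weight $d_{k}\geq 1$, only finitely many monomials $x_{w_{1}}^{j_1}\cdots x_{w_{n}}^{j_n}$ have any prescribed total weight $\sum_k j_k d_{k}$, so every graded component is finite dimensional and the series $\poincare(t)$ is well defined as a formal power series.

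The core computation is the multiplicativity of the Poincaré series under the graded tensor decomposition $\polyring{x_{w_{1}},\ldots,x_{w_{n}}}\cong\bigotimes_{k=1}^{n}\polyring{x_{w_{k}}}$, where each factor is graded by weight. The degree-$m$ component of the tensor product is the direct sum over all ways of writing $m$ as a sum of contributions from each factor, so its dimension is the coefficient of $t^m$ in the product of the individual series; hence $\poincare(t)=\prod_{k=1}^{n}\poincare_{k}(t)$, where $\poincare_{k}$ is the Poincaré series of the single-variable ring $\polyring{x_{w_{k}}}$. For the $k$-th factor the monomials are exactly the powers $x_{w_{k}}^{j}$ of weight $j\,d_{k}$ for $j\geq 0$, so $\poincare_{k}(t)=\sum_{j\geq 0} t^{j d_{k}}=1/(1-t^{d_{k}})$. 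Multiplying these yields the first equality $\poincare(t)=1/\prod_{k=1}^{n}(1-t^{d_{k}})$.

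The second equality is a matter of collecting equal factors in this product according to the degrees $d_{k}=\deg(w_{k})$. The one structural input I would invoke is that a {Hall set} is a \emph{homogeneous} basis of the free Lie algebra (by \cite[Theorem~4.9]{Reutenauer-1993-free_lie_algebras}): the {Hall trees} of degree exactly $k$ project to a basis of the layer $\freelielayer{r}{k}$, so among all {Hall words} $w_{1},\ldots,w_{n}$ of degree up to $d$ there are precisely $\dim\freelielayer{r}{k}$ of degree $k$, for each $k=1,\ldots,d$. Grouping the factors of the product by their common degree then turns $\prod_{k=1}^{n}(1-t^{d_{k}})$ into $\prod_{k=1}^{d}(1-t^{k})^{\dim\freelielayer{r}{k}}$, which completes the proof.

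There is no genuine obstacle here; the statement is essentially the standard Hilbert-series formula for a weighted polynomial ring. The only points requiring a word of care are the justification that the graded components are finite dimensional (so that $\poincare(t)$ exists) and the identification of the number of generators in each weight with $\dim\freelielayer{r}{k}$, which is exactly the homogeneity of the Hall basis.
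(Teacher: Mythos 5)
Your proof is correct and follows essentially the same route as the paper: the paper simply cites \cite[Ch.~V, \S5.1, Proposition~1]{Bourbaki-1968-lie_chapitre_4-6} for the first equality where you supply the standard tensor-product/geometric-series derivation, and the second equality is obtained in both cases by grouping the factors according to the count of $\dim\freelielayer{r}{k}$ Hall words in each degree $k$. No gaps.
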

\begin{proof}\label{g:proof:idp16}
By \cite[Ch. V, §5.1, Proposition~1]{Bourbaki-1968-lie_chapitre_4-6}, the Poincaré series for the weighted polynomial ring is given by the first formula \(\poincare(t) = 1/\prod_{k=1}^{n} (1-t^{d_{k}})\). Since there are exactly \(\dim\freelielayer{r}{k}\) Hall words of degree \(k\), the second formula follows by grouping terms.
\end{proof}

\typeout{************************************************}
\typeout{Section 3 Common factors of abnormal polynomials}
\typeout{************************************************}

\section{Common factors of abnormal polynomials}\label{section-abnormality-of-ode-trajectories}

\typeout{************************************************}
\typeout{Subsection 3.1 Link between ODEs and polynomial factors}
\typeout{************************************************}

\subsection{Link between ODEs and polynomial factors}\label{subsection-reduction-to-searching-polynomial-factors}
The {characterization} of abnormal curves in Carnot groups as curves contained in the abnormal varieties leads to a strategy to prove {Theorem~{\ref{theorem-every-ode-trajectory-is-abnormal}}} about abnormality of ODE trajectories by searching for common factors in abnormal polynomials. The first part is {Lemma~{\ref{lemma-vector-of-polynomials-as-derivatives-in-free-lie-algebra}}}, which states that given an arbitrary set of polynomials \(Q_{1},\ldots,Q_{r}\), there exists some polynomial \(Q\) in high enough step whose horizontal derivatives are \(X_iQ=Q_{i}\). This gives a method to construct a polynomial first integral \(Q\) for any polynomial ODE.

The second part is {Lemma~{\ref{lemma-curves-annihilating-higher-order-abnormal-polynomials}}}, which states that for an arbitrary layer \(m\lt s\), annihilating all abnormal polynomials \(\abnormalpolynomial{X}{\covector}\) with \(X\in \lielayer{\mathfrak{g}}{m}\) is a sufficient criterion for abnormality. Hence if all the abnormal polynomials of a given layer have a common factor \(Q\), then all trajectories within the variety \(Q=0\) are abnormal.
\begin{lemma}\label{lemma-vector-of-polynomials-as-derivatives-in-free-lie-algebra}
Let \(\hallset\) be a {Hall set} for the free Lie algebra \(\freelie{r}\) of rank \(r\) and let \(X_1,\ldots,X_r\) be the generators of \(\freelie{r}\). Then for every collection of polynomials \(Q_{1},\ldots,Q_{r}\in\polyring{\hallset}\) there exists a polynomial \(Q\in\polyring{\hallset}\) such that \(X_iQ = Q_{i}\) for all \(i=1,\ldots,r\).
\end{lemma}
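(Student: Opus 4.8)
The plan is to recognize the pair \((\polyring{\hallset},\freelieaction)\) as the graded dual of the universal enveloping algebra \(U(\freelie{r})\), under which the generators act by an extremely simple combinatorial rule, and then to write down an explicit right inverse. Since \(\freelie{r}\) is free, \(U(\freelie{r})\) is the free associative algebra \(\RR\langle X_1,\ldots,X_r\rangle\), whose homogeneous components have as a basis the words in the letters \(1,\ldots,r\). The action of a generator \(X_i\) as a left-invariant vector field on polynomial functions is dual to left multiplication by \(X_i\) in \(U(\freelie{r})\); on the dual word basis this multiplication prepends the letter \(i\), so its transpose \(\freelieaction(X_i,\cdot)\) deletes a leading letter \(i\) when present and annihilates every word not beginning with \(i\). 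First I would make this dictionary precise: identify \(\polyring{\hallset}\), as a graded \(\freelie{r}\)-module, with the graded dual of \(U(\freelie{r})\) compatibly with the fixed Hall basis, so that each generator \(X_i\) acts by the strip-a-leading-\(i\) operator just described.

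Granting the dictionary, the lemma becomes immediate. For each \(i\) let \(\Lambda_i\) denote the prepend-a-leading-\(i\) operator on the word basis, extended linearly; it raises weighted degree by one and sends polynomials to polynomials. Set \(Q := \sum_{i=1}^r \Lambda_i Q_i\). Then \(\freelieaction(X_j,\Lambda_i Q_i) = Q_i\) when \(j=i\), because stripping the leading \(j\) undoes prepending it, and \(\freelieaction(X_j,\Lambda_i Q_i)=0\) when \(j\neq i\), because every word in \(\Lambda_i Q_i\) begins with \(i\neq j\) and is therefore annihilated. Summing over \(i\) gives \(X_j Q = Q_j\) for every \(j\), and \(Q\) is a genuine polynomial since each \(Q_i\) involves only finitely many words.

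As a consistency check, and as an alternative route that stays entirely within the coordinate formalism, one can decompose everything by weighted degree: since \(\freelieaction(X_i,\cdot)\) is homogeneous of degree \(-1\), the equation \(X_i Q = Q_i\) splits into independent equations on each homogeneous component, and it suffices to show that the combined map \(Q \mapsto (X_1 Q,\ldots,X_r Q)\) from weighted degree \(d+1\) to \(r\) copies of weighted degree \(d\) is surjective, in fact an isomorphism. Both sides have the same dimension: by {Lemma~{\ref{lemma-poincare-series-for-monomials-of-bounded-degree}}} the Poincaré series of \(\polyring{\hallset}\) is \(\poincare(t)=1/\prod_{k\geq 1}(1-t^k)^{\dim\freelielayer{r}{k}}\), which by the Poincaré--Birkhoff--Witt identity for the free Lie algebra equals \(1/(1-rt)\), so the weighted degree \(d\) component has dimension \(r^d\) and the two sides both have dimension \(r^{d+1}\). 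The operators \(\Lambda_i\) furnish an explicit inverse, so the map is an isomorphism degree by degree.

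The main obstacle is the first step: verifying that the coordinate action \(\freelieaction\) of {Definition~{\ref{definition-free-lie-action-on-polynomials}}} really is the strip-a-leading-letter operator under a clean identification of \(\polyring{\hallset}\) with the word dual. The subtlety is that the Hall-basis coordinates \(x_w\) are not literally the dual words, so one must track how the left-invariant vector fields \(X_i=\partial_{x_i}+(\text{higher-degree corrections})\) correspond to the combinatorial action through the PBW bijection; this is where {Lemma~{\ref{lemma-abnormal-polynomials-in-exponential-coordinates}}} and {Lemma~{\ref{lemma-derivatives-of-abnormal-polynomials}}} do the work. Once the dictionary is in place, the construction \(Q=\sum_i\Lambda_iQ_i\) finishes the proof with no further computation.
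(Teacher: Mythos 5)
Your strategy is viable and genuinely different from the paper's, but as written it has a gap at its load-bearing step. Everything rests on the ``dictionary'': that \(\polyring{\hallset}\) with the action \(\freelieaction\) of Definition~\ref{definition-free-lie-action-on-polynomials} is isomorphic, as a graded module, to the graded dual of \(U(\freelie{r})=\RR\langle X_1,\ldots,X_r\rangle\) with the generators acting by deconcatenation of a letter. This is a true and standard fact (the duality between the concatenation and shuffle Hopf algebras, with exponential coordinates of the second kind realizing the dual PBW basis), but you assert it rather than prove it, and the two lemmas you point to, Lemma~\ref{lemma-abnormal-polynomials-in-exponential-coordinates} and Lemma~\ref{lemma-derivatives-of-abnormal-polynomials}, concern abnormal polynomials specifically and do not establish the identification for general polynomials. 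There is also a left/right slip: with the natural pairing \(\langle f,u\rangle=(uf)(e)\), the operator \(X_i\) is dual to \emph{right} multiplication, so it strips a trailing \(i\) rather than a leading one; this is harmless but shows the bookkeeping has not actually been done. Finally, your ``consistency check'' does not stand on its own: equal dimensions plus the existence of the inverse \(\Lambda_i\) is what you invoke, and the latter presupposes the very dictionary in question. The cheapest repair is to make the dimension-count route self-contained: the map \(Q\mapsto(X_1Q,\ldots,X_rQ)\) is homogeneous of weighted degree \(-1\), both sides of the degree-\((d+1)\) component have dimension \(r^{d+1}\) by your Poincaré series computation, and the map is injective in positive degree because a polynomial annihilated by all horizontal left-invariant fields is annihilated by all left-invariant fields (they bracket-generate), hence constant, hence zero when homogeneous of positive weighted degree. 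Injectivity plus equal finite dimensions gives surjectivity, and the lemma follows degree by degree with no appeal to the word dual.

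For comparison, the paper avoids duality and dimension counts entirely: it extends \(X_i\mapsto Q_i\) to a linear map \(\varphi\) on all of \(\freelie{r}\) by forcing \(\varphi([X,Y])=X\varphi(Y)-Y\varphi(X)\), which is well defined precisely because the only relations in a free Lie algebra are antisymmetry and Jacobi; since the action lowers degree, \(\varphi\) descends to a free nilpotent quotient, where it defines a one-form \(\omega\) on left-invariant frames whose closedness is exactly the recursion, and simple connectedness of the free Carnot group yields \(Q\) with \(dQ=\omega\). That argument uses only the absence of relations in \(\freelie{r}\) and the Poincaré lemma. Your route, once the dictionary or the injectivity argument above is supplied, buys something the paper's proof does not: an explicit closed-form section \(Q=\sum_i\Lambda_iQ_i\) and an exact count of the solution space in each degree.
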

\begin{proof}\label{g:proof:idp17}
Define a linear map \(\varphi\colon\freelie{r}\to\polyring{\hallset}\) recursively by
\begin{align}
\varphi(X_i) \amp = Q_{i},\quad i=1,\ldots,r\notag\\
\varphi([X,Y]) \amp = X\varphi(Y)-Y\varphi(X)\text{,}\label{eq-recursive-def-varphi}
\end{align}
where \(X\varphi(Y)\) and \(Y\varphi(X)\) are determined by the derivation action \(\freelie{r}\acts\polyring{\hallset}\) as defined in {Definition~{\ref{definition-free-lie-action-on-polynomials}}}. A routine computation with the recursion formula {({\ref{eq-recursive-def-varphi}})} shows that \(\varphi([X,Y])=-\varphi([Y,X])\) and
\begin{equation*}
\varphi([X,[Y,Z]])+\varphi([Y,[Z,X]])+\varphi([Z,[X,Y]])=0\text{.}
\end{equation*}
Since there are no relations in the free Lie algebra except for anticommutativity and the Jacobi identity, it follows that the map \(\varphi\) is well defined.

The action \(\freelie{r}\acts\polyring{\hallset}\) is by derivations, so polynomials \(X\varphi(Y)\) are of lower (weighted) degree than polynomials \(\varphi(Y)\). In particular, \(\varphi(\freelielayer{r}{k})=0\) for all \(k\gt \max_i\deg(Q_i)+1\). Let \(s\geq 1\) be the minimal index such that \(\varphi(\freelielayer{r}{k})=0\) for all \(k\gt s\).

\label{g:notation:idp18}\label{g:notation:idp19} By the choice of the step \(s\), the map \(\varphi\) induces a well defined linear map \(\freelie{r,s}\to\polyring{x_{w_{1}},\ldots,x_{w_{n}}}\), where \(\freelie{r,s}\) is the free nilpotent Lie algebra of rank \(r\) and step \(s\), and \(w_{1},\ldots,w_{n}\) is the complete list of Hall words of degrees \(1,\ldots,s\). For ease of notation, the restricted map will also be denoted by \(\varphi\).

In the free Carnot group \(\freecarnot{r,s}\) of rank \(r\) and step \(s\), consider the differential one-form \(\omega\) such that for any left-invariant vector field \(\tilde{X}(g) = (L_{g})_*X\), the function \(g\mapsto \omega(\tilde{X}(g))\) is exactly the function \(\varphi(X)\) when computed in exponential coordinates {adapted} to the Hall set. That is, in these coordinates the one-form is defined for any vector \(X\in T_x\freecarnot{r,s}\) by
\begin{equation*}
\omega_{x}(X) := \varphi((L_{x^{-1}})_*X)(x)\text{.}
\end{equation*}

For vector fields \(\tilde{X},\tilde{Y}\colon \freecarnot{r,s}\to T\freecarnot{r,s}\), the classical formula for the differential of a one-form states that
\begin{equation*}
d\omega(\tilde{X},\tilde{Y}) = \tilde{X}\omega(\tilde{Y})-\tilde{Y}\omega(\tilde{X})-\omega([\tilde{X},\tilde{Y}])\text{.}
\end{equation*}
For left-invariant vector fields \(\tilde{X}(x)=(L_x)_*X\) and \(\tilde{Y}(x)=(L_x)_*Y\) the above simplifies to
\begin{equation*}
d\omega(\tilde{X},\tilde{Y}) = X\varphi(Y)-Y\varphi(X)-\varphi([X,Y])\text{.}
\end{equation*}
By the recursive definition {({\ref{eq-recursive-def-varphi}})} of the map \(\varphi\), it follows that \(d\omega = 0\), i.e.\@, the form \(\omega\) is closed. Since \(\freecarnot{r,s}\) is simply connected as a Carnot group, the closed form \(\omega\) is exact, so there exists a smooth function \(Q\colon\freecarnot{r,s}\to\RR\) such that \(dQ = \omega\). The coefficients of the one-form \(\omega\) are all polynomial, so the smooth function \(Q\) is a polynomial. Moreover, the polynomial \(Q\) has the desired property
\begin{equation*}
X_iQ = dQ(\tilde{X}_i) = \omega(\tilde{X}_i) = \varphi(X_i) = Q_{i}\text{.}\qedhere
\end{equation*}

\end{proof}
\begin{lemma}\label{lemma-curves-annihilating-higher-order-abnormal-polynomials}
Let \(G\) be a Carnot group of step \(s\) and let \(1\leq m\lt s\). Let \(\gamma\colon (a,b)\to G\) be a horizontal curve whose closure \(\overline{\gamma((a,b))}\subset G\) contains the identity \(\identity{G}\) of the group \(G\). If there exists a covector \(\covector\in\mathfrak{g}^*\) such that \(\abnormalpolynomial{X}{\covector}\circ\gamma\equiv 0\) for all \(X\in \lielayer{\mathfrak{g}}{m}\) and at least one of the {abnormal polynomials} \(\abnormalpolynomial{X}{\covector}\) is nonzero, then the curve \(\gamma\) is abnormal.
\end{lemma}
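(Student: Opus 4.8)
The plan is to apply the abnormality criterion of Lemma~\ref{lemma-curves-in-abnormal-variety-are-abnormal}, i.e.\ to produce a \emph{nonzero} covector that annihilates the horizontal layer $\lielayer{\mathfrak{g}}{1}$ along $\gamma$. Differentiating the identities $\abnormalpolynomial{X}{\covector}\circ\gamma\equiv0$ in $t$ only propagates vanishing \emph{upward} in the stratification, since by Lemma~\ref{lemma-derivatives-of-abnormal-polynomials} the $t$-derivative of $\abnormalpolynomial{Y}{\covector}\circ\gamma$ is $\abnormalpolynomial{[u(t),Y]}{\covector}\circ\gamma$, where $u(t)$ is the control and $[u(t),Y]$ sits one layer above $Y$. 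The crux is to read this relation the other way: if $Y\in\lielayer{\mathfrak{g}}{\ell}$ with $\ell<m$, then $[u(t),Y]\in\lielayer{\mathfrak{g}}{\ell+1}$, so as soon as the layer-$(\ell+1)$ abnormal polynomials are known to vanish along $\gamma$, the function $\abnormalpolynomial{Y}{\covector}\circ\gamma$ has vanishing derivative and is therefore constant.

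Two preliminary reductions make this usable. First, since $\identity{G}\in\overline{\gamma((a,b))}$ and $\abnormalpolynomial{X}{\covector}(\identity{G})=\covector(X)$, continuity forces $\covector(X)=0$ for all $X\in\lielayer{\mathfrak{g}}{m}$; evaluating any constant function $\abnormalpolynomial{Y}{\covector}\circ\gamma$ at the identity will likewise pin its value to $\covector(Y)$. Second, I would replace $\covector$ by its truncation $\tilde{\covector}$ to the layers strictly above $m$. Because $\Ad{g}X-X$ always lies in layers strictly above that of a homogeneous $X$, the filtration $\lowercentralseriesterm{\mathfrak{g}}{k}$ is $\Ad{g}$-invariant, so the components of $\covector$ in layers below $m$ annihilate $\Ad{g}X$ for every $X\in\lielayer{\mathfrak{g}}{m}$; combined with $\covector|_{\lielayer{\mathfrak{g}}{m}}=0$ this yields $\abnormalpolynomial{X}{\tilde{\covector}}=\abnormalpolynomial{X}{\covector}$ for such $X$, so $\tilde{\covector}$ still annihilates layer $m$ along $\gamma$. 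The nonvanishing hypothesis says precisely that $\covector$ does not annihilate $\lowercentralseriesterm{\mathfrak{g}}{m}$, which is the span of all $\Ad{g}X$ with $X\in\lielayer{\mathfrak{g}}{m}$ (the brackets $[\lielayer{\mathfrak{g}}{1},\lielayer{\mathfrak{g}}{k}]$ generate all higher layers); together with $\covector|_{\lielayer{\mathfrak{g}}{m}}=0$ this guarantees $\tilde{\covector}\neq0$ and forces $m<s$.

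With $\tilde{\covector}$ supported in layers $>m$ and annihilating layer $m$ along $\gamma$, I would then prove by downward induction on $\ell=m,m-1,\dots,1$ that $\abnormalpolynomial{Y}{\tilde{\covector}}\circ\gamma\equiv0$ for all $Y\in\lielayer{\mathfrak{g}}{\ell}$. The base case $\ell=m$ is the previous step. For the inductive step, the downward reading of Lemma~\ref{lemma-derivatives-of-abnormal-polynomials} shows $\abnormalpolynomial{Y}{\tilde{\covector}}\circ\gamma$ is constant, and evaluating at the identity gives the value $\tilde{\covector}(Y)=0$, since $\tilde{\covector}$ is supported in layers $>m\geq\ell$. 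Taking $\ell=1$ produces the nonzero covector $\tilde{\covector}$ with $\abnormalpolynomial{W}{\tilde{\covector}}\circ\gamma\equiv0$ for every $W\in\lielayer{\mathfrak{g}}{1}$, so $\gamma$ is abnormal by Lemma~\ref{lemma-curves-in-abnormal-variety-are-abnormal}.

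I expect the main obstacle to be conceptual rather than computational: one must notice that Lemma~\ref{lemma-derivatives-of-abnormal-polynomials} should be exploited to force the lower-layer abnormal polynomials to be \emph{constant}, instead of trying to spread the given vanishing upward, and then arrange the bookkeeping—truncating $\covector$ to high layers, using the $\Ad{}$-invariance of the filtration, and invoking $\identity{G}\in\overline{\gamma((a,b))}$ in its two roles of killing $\covector$ on $\lielayer{\mathfrak{g}}{m}$ and of evaluating the constants—so that the covector surviving at the bottom layer is genuinely nonzero.
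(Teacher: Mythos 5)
Your proof is correct and follows essentially the same route as the paper: both rest on Lemma~\ref{lemma-derivatives-of-abnormal-polynomials} to make the lower-layer abnormal polynomials constant along \(\gamma\), and on \(\identity{G}\in\overline{\gamma((a,b))}\) together with a normalization of the covector to conclude that these constants vanish. The only difference is organizational—you truncate \(\covector\) once and run a downward induction on the layer index, whereas the paper runs an upward induction on the lemma statement itself, normalizing the covector at each step.
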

\begin{proof}\label{g:proof:idp20}
The claim will be proved by induction on the layer \(m\). The base case \(m=1\) follows from the {characterization} of abnormality by abnormal polynomials. Suppose that the claim holds up to some layer \(m\lt s-1\) and suppose that
\begin{equation}
\abnormalpolynomial{X}{\covector}\circ\gamma\equiv 0\label{eq-abnormal-vanishing-layer}
\end{equation}
for every \(X\in \lielayer{\mathfrak{g}}{m+1}\), and that at least one of the polynomials \(\abnormalpolynomial{X}{\covector}\) is nonzero.

By the explicit formula of {Lemma~{\ref{lemma-abnormal-polynomials-in-exponential-coordinates}}}, the abnormal polynomials \(\abnormalpolynomial{X}{\covector}\) for \(X\in \lielayer{\mathfrak{g}}{m+1}\) do not depend on the components of degree less than \(m+1\) of the covector \(\covector\). Hence without loss of generality it will be assumed that \(\covector(\lielayer{\mathfrak{g}}{m}) = 0\).

Fix an arbitrary \(Y\in\lielayer{\mathfrak{g}}{m}\). By {Lemma~{\ref{lemma-derivatives-of-abnormal-polynomials}}}, the horizontal derivatives for \(X\in\lielayer{\mathfrak{g}}{1}\) of the abnormal polynomial \(\abnormalpolynomial{Y}{\covector}\) are abnormal polynomials \(\abnormalpolynomial{[X,Y]}{\covector}\). Since \([X,Y]\in \lielayer{\mathfrak{g}}{m+1}\), assumption {({\ref{eq-abnormal-vanishing-layer}})} implies that the abnormal polynomial \(\abnormalpolynomial{Y}{\covector}\) is constant along the horizontal curve \(\gamma\).

The property \(\covector(\lielayer{\mathfrak{g}}{m}) = 0\) implies that \(\abnormalpolynomial{Y}{\covector}(\identity{G}) = 0\). Moreover, by assumption \(\identity{G}\in\overline{\gamma((a,b))}\), so by continuity of the polynomial \(\abnormalpolynomial{Y}{\covector}\), it follows that \(\abnormalpolynomial{Y}{\covector}\circ\gamma\equiv 0\). Since \(Y\in\lielayer{\mathfrak{g}}{m}\) was arbitrary, the inductive step is complete, and the claim follows.
\end{proof}
\begin{remark}\label{remark-nonzero-polynomial-vs-covector}
\label{g:notation:idp21}
The condition in {Lemma~{\ref{lemma-curves-annihilating-higher-order-abnormal-polynomials}}} that at least one of the {abnormal polynomials} \(\abnormalpolynomial{X}{\covector}\) for \(X\in\lielayer{\mathfrak{g}}{m}\) is nonzero can be equivalently rephrased as the requirement that \(\covector(\lowercentralseriesterm{\mathfrak{g}}{m})\neq 0\), where \(\lowercentralseriesterm{\mathfrak{g}}{m} = \lielayer{\mathfrak{g}}{m}\oplus\cdots\oplus\lielayer{\mathfrak{g}}{s}\) is the \(m\):th term of the lower central series of \(\mathfrak{g}\).
\end{remark}

\typeout{************************************************}
\typeout{Subsection 3.2 A quotient eliminating higher degree variables}
\typeout{************************************************}

\subsection{A quotient eliminating higher degree variables}\label{subsection-a-quotient-eliminating-higher-degree-variables}
Not all monomials appear in every {abnormal polynomial} even in arbitrarily high step. For instance, the coefficient of the monomial \(x_1\) in \(\abnormalpolynomial{X_1}{\covector}\) is always \(\covector(\ad{X_1}X_1) = 0\). For a vector \(X\) in layer \(m\) of a free Carnot group of step \(s\), every monomial of degree at most \(s-m\) containing only variables of degrees \(1,\ldots,m-1\) appears in the abnormal polynomial \(\abnormalpolynomial{X}{\covector}\) for some covector \(\covector\). For monomials involving variables of degree \(m\) and above there is no such guarantee. For this reason, the variables of degree \(m\) and above will not be useful for the search of common factors in abnormal polynomials, and it will be more convenient to eliminate them completely.
\begin{lemma}\label{lemma-quotient-by-higher-variables}
Let \(\freelie{r,s}\) be the free nilpotent Lie algebra of rank \(r\) and step \(s\). Let \(m\in\NN\) and let \(\mathfrak{g}\) be the stratified Lie algebra defined by quotienting \(\freelie{r,s}\) by the ideal \([\lowercentralseriesterm{\freelie{r,s}}{m},\lowercentralseriesterm{\freelie{r,s}}{m}]\), where \(\lowercentralseriesterm{\freelie{r,s}}{m} = \freelielayer{r,s}{m}\oplus \freelielayer{r,s}{m+1} \oplus \cdots \freelielayer{r,s}{s}\) is the \(m\):th term of the lower central series of \(\freelie{r,s}\). Then in the Carnot group \(G\) with Lie algebra \(\mathfrak{g}\), every {abnormal polynomial} \(\abnormalpolynomial{X}{\covector}\) with \(X\in\lielayer{\mathfrak{g}}{m}\) depends only on the variables of degrees \(1,\ldots, m-1\).
\end{lemma}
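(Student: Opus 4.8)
The plan is to compute directly with the explicit coordinate formula of Lemma~\ref{lemma-abnormal-polynomials-in-exponential-coordinates} and to show that every monomial involving a variable attached to a basis element of layer $\geq m$ receives coefficient zero in $\mathfrak{g}$. Fix a homogeneous basis $X_1,\ldots,X_n$ of $\mathfrak{g}$ adapted to the stratification (for instance the complementary Hall basis), so that each coordinate $x_j$ carries the layer of $X_j$, and fix a vector $X\in\lielayer{\mathfrak{g}}{m}$. By the formula, the coefficient of the monomial $x_1^{i_1}\cdots x_n^{i_n}$ is a nonzero scalar multiple of $\covector\big(\ad{X_n}^{i_n}\cdots\ad{X_1}^{i_1}X\big)$, so it suffices to prove that this iterated bracket already vanishes in $\mathfrak{g}$ whenever $i_j\geq 1$ for some $j$ with $X_j\in\lowercentralseriesterm{\mathfrak{g}}{m}$.

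The first step is to record the defining property of the quotient: since $\mathfrak{g}=\freelie{r,s}/[\lowercentralseriesterm{\freelie{r,s}}{m},\lowercentralseriesterm{\freelie{r,s}}{m}]$, the bracket of any two elements of $\lowercentralseriesterm{\mathfrak{g}}{m}=\lielayer{\mathfrak{g}}{m}\oplus\cdots\oplus\lielayer{\mathfrak{g}}{s}$ is identically zero in $\mathfrak{g}$. The second step is to unwind the iterated bracket from the inside out. The operators are applied in increasing order of index, so if $i_j\geq 1$, then the innermost occurrence of $\ad{X_j}$ acts on the element $W:=\ad{X_{j-1}}^{i_{j-1}}\cdots\ad{X_1}^{i_1}X$. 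Because $X$ lies in layer $m$ and every $\ad$-operation is degree-nondecreasing, $W$ lies in $\lowercentralseriesterm{\mathfrak{g}}{m}$.

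The crux is then immediate: the element $X_j$ itself lies in $\lowercentralseriesterm{\mathfrak{g}}{m}$ by hypothesis, so $\ad{X_j}W=[X_j,W]$ is a bracket of two elements of $\lowercentralseriesterm{\mathfrak{g}}{m}$ and hence vanishes in $\mathfrak{g}$ by the defining relation of the quotient. This kills the whole iterated bracket, so the corresponding monomial coefficient is zero. Consequently only monomials in the variables attached to layers $1,\ldots,m-1$ survive, which is exactly the assertion.

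I expect the one point requiring care to be the degree bookkeeping in the second step: one must check that the intermediate element $W$ reached just before the first application of $\ad{X_j}$ genuinely belongs to $\lowercentralseriesterm{\mathfrak{g}}{m}$. This is precisely where the hypothesis $X\in\lielayer{\mathfrak{g}}{m}$ is used, together with the observation that bracketing never lowers the layer. Beyond this homogeneity count no structural information about the Hall basis is needed, so once the degrees are tracked the argument closes at once.
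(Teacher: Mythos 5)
Your argument is correct and follows essentially the same route as the paper: both identify the monomial coefficient with $\covector\bigl(\ad{X_{n}}^{i_n}\cdots\ad{X_{1}}^{i_1}X\bigr)$ via Lemma~\ref{lemma-abnormal-polynomials-in-exponential-coordinates}, observe that the innermost application of $\ad{X_j}$ for a variable of degree $\geq m$ is a bracket of two elements of $\lowercentralseriesterm{\mathfrak{g}}{m}$ (since $X\in\lielayer{\mathfrak{g}}{m}$ and bracketing never lowers the layer), and conclude that it vanishes by the defining relation of the quotient. The degree bookkeeping you flag as the delicate point is exactly the step the paper also makes explicit, so nothing is missing.
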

\begin{proof}\label{g:proof:idp22}
By the formula of {Lemma~{\ref{lemma-abnormal-polynomials-in-exponential-coordinates}}}, if some abnormal polynomial \(\abnormalpolynomial{X}{\covector}\) contains a monomial \(x^I=x_1^{i_1}\cdots x_n^{i_n}\), then there exists a nonzero bracket
\begin{equation*}
\ad{X_{n}}^{i_n}\cdots \ad{X_{1}}^{i_1}X \neq 0
\end{equation*}
in the Lie algebra \(\mathfrak{g}\).

Let \(x^I\) be any monomial that contains a variable \(x_j\) of degree \(\deg(x_j)\geq m\), so \(i_j\gt 0\). By assumption \(X_j\in \lowercentralseriesterm{\mathfrak{g}}{m}\) and \(X\in \lielayer{\mathfrak{g}}{m}\), so
\begin{equation*}
\ad{X_j}\ad{X_{j-1}}^{i_{j-1}}\cdots\ad{X_{1}}^{i_1}X = [X_j,\ad{X_{j-1}}^{i_{j-1}}\cdots\ad{X_{1}}^{i_1}X]\in [\lowercentralseriesterm{\mathfrak{g}}{m},\lowercentralseriesterm{\mathfrak{g}}{m}]\text{.}
\end{equation*}
By the construction of the Lie algebra \(\mathfrak{g}\), the above bracket must vanish. But then the longer bracket \(\ad{X_{n}}^{i_n}\cdots \ad{X_{1}}^{i_1}X\) is also zero, so the monomial \(x^I\) cannot appear in any abnormal polynomial \(\abnormalpolynomial{X}{\covector}\) with \(X\in\lielayer{\mathfrak{g}}{m}\).
\end{proof}

\typeout{************************************************}
\typeout{Subsection 3.3 The linear system for abnormal polynomial factors}
\typeout{************************************************}

\subsection{The linear system for abnormal polynomial factors}\label{subsection-existence-of-arbitrary-polynomial-factors}
For a fixed polynomial \(Q\) and layer \(m\in\NN\), the existence of a covector \(\covector\in\mathfrak{g}^*\) such that \(Q\) is a common factor for all the {abnormal polynomials} \(\abnormalpolynomial{X}{\covector}\), \(X\in \lielayer{\mathfrak{g}}{m}\), defines a linear system as will be described in the proof of {Proposition~{\ref{proposition-existence-of-polynomial-factor}}}. The linear system can be reduced to a form where the number of equations grows slower than the number of variables when increasing the step. Hence for a large enough step, the resulting linear system is underdetermined and has nontrivial solutions.

One reason for the slower increase of the number of equations is the following lemma showing that at least one of the abnormal polynomials in each layer can be freely chosen.
\begin{lemma}\label{lemma-free-abnormal-polynomial}
Let \(m\geq 2\) be a fixed layer and \(s\in\NN\) a fixed step. Let \(w\) be the minimal {deg-left-right Hall word} of degree \(m\), i.e.\@, the Hall word \(w=1^{m-1}2\). Let \(G\) be the Carnot group whose Lie algebra is the quotient \(\mathfrak{g} = \freelie{r,s}/[\lowercentralseriesterm{\freelie{r,s}}{m},\lowercentralseriesterm{\freelie{r,s}}{m}]\) eliminating variables of degree \(m\) and higher. Then for every polynomial \(P\colon G\to\RR\) of degree at most \(s-m\) involving only variables of degrees \(1,\ldots,m-1\), there exists a covector \(\covector\in\mathfrak{g}^*\) such that \(\abnormalpolynomial{w}{\covector} = P\).
\end{lemma}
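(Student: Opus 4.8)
The plan is to realize $P$ by prescribing the components of the covector $\covector$ one monomial at a time, via the explicit coefficient formula of Lemma~\ref{lemma-free-monomials-of-abnormal-polynomials}. The first thing I would record is that the minimal deg-left-right Hall word $w=1^{m-1}2$ of degree $m$ has Hall factorization $w=w'w''$ with $w'=1$ and $w''=1^{m-2}2$, since its Hall tree is $(1,\ldots)$ with left subtree the single generator $1$. Consequently the side condition $v_1\geq w'$ in Lemma~\ref{lemma-free-monomials-of-abnormal-polynomials} degenerates to $v_1\geq 1$ and holds automatically. Moreover, by Lemma~\ref{lemma-quotient-by-higher-variables} the abnormal polynomial $\abnormalpolynomial{w}{\covector}$ for $X_w\in\lielayer{\mathfrak{g}}{m}$ involves only variables of degrees $1,\ldots,m-1$, so it lives in exactly the same space of polynomials as the target $P$.

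Next I would write $P=\sum_I c_I x^I$ with each monomial $x^I=x_{v_1}^{i_1}\cdots x_{v_\ell}^{i_\ell}$ having $v_1\lt\cdots\lt v_\ell$ of degree at most $m-1$. Since the order is degree-compatible, each $v_j\lt w$, so Lemma~\ref{lemma-free-monomials-of-abnormal-polynomials} applies to every monomial of $P$: the word $vw=(v_\ell)^{i_\ell}\cdots(v_1)^{i_1}w$ is a deg-left-right Hall word and the coefficient of $x^I$ in $\abnormalpolynomial{w}{\covector}$ is $\frac{1}{i_1!\cdots i_\ell!}\covector_{vw}$. By Lemma~\ref{lemma-quotient-by-higher-variables} together with Lemma~\ref{lemma-free-monomials-of-abnormal-polynomials} these are the only monomials that can occur in $\abnormalpolynomial{w}{\covector}$. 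Because $\deg(vw)=\deg(x^I)+m\leq(s-m)+m=s$, each such Hall word has degree at most $s$, so $X_{vw}$ is a genuine basis element of $\freelie{r,s}$, and distinct monomials $x^I$ produce distinct Hall words $vw$ by the uniqueness of the factorization in Lemma~\ref{lemma-hall-words-for-deg-left-right-order}. Granting that the components $\covector_{vw}$ can be prescribed independently, I would then set $\covector_{vw}:=i_1!\cdots i_\ell!\,c_I$ for the monomials of $P$ (with $vw=w$, i.e.\ $\ell=0$, handling any constant term) and $\covector:=0$ on all remaining Hall basis elements, yielding $\abnormalpolynomial{w}{\covector}=P$.

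The single point requiring real work, and the main obstacle, is justifying that the $\covector_{vw}$ are free parameters of a covector $\covector\in\mathfrak{g}^*$, i.e.\ that the elements $X_{vw}$ remain linearly independent modulo the ideal $[\lowercentralseriesterm{\freelie{r,s}}{m},\lowercentralseriesterm{\freelie{r,s}}{m}]$ defining $\mathfrak{g}$. The key structural claim I would isolate is that this ideal is spanned precisely by those Hall basis elements $X_h$ whose Hall tree $h=(h_1,h_2)$ has left subtree of degree $\deg(h_1)\geq m$. One inclusion is immediate: if $\deg(h_1)\geq m$ then, since $h_1\lt h_2$ forces $\deg(h_2)\geq\deg(h_1)\geq m$, one has $X_h=[X_{h_1},X_{h_2}]\in[\lowercentralseriesterm{\freelie{r,s}}{m},\lowercentralseriesterm{\freelie{r,s}}{m}]$. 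The reverse inclusion, equivalently that the Hall elements with left subtree of degree $\lt m$ descend to a \emph{basis} of $\mathfrak{g}$ with no unexpected collapse, is the crux; I expect to prove it by induction on degree, straightening a bracket $[X_p,X_q]$ of two Hall elements of degree $\geq m$ into the Hall basis via the Jacobi identity and checking that every Hall tree produced keeps a left subtree of degree $\geq m$ (alternatively, invoking the Shirshov--Witt freeness of $\lowercentralseriesterm{\freelie{r,s}}{m}$). Once this is in place, each $X_{vw}$ has top-level left subtree $X_{v_\ell}$ of degree $\deg(v_\ell)\leq m-1$ (and $X_w$ itself has left subtree of degree $1\lt m$), so none of them lies in the ideal; they descend to distinct Hall basis elements of $\mathfrak{g}$, the components $\covector_{vw}$ are free, and the construction of the previous paragraph goes through.
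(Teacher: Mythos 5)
Your first two paragraphs reproduce the paper's proof essentially verbatim: factor \(w=(1)(1^{m-2}2)\), note that the side condition \(v_1\geq w'\) of Lemma~\ref{lemma-free-monomials-of-abnormal-polynomials} is vacuous because \(1\) is the minimal deg-left-right Hall word, conclude that \(\abnormalpolynomial{w}{\covector}=\sum_I\frac{1}{I!}\covector_{v(I)w}x^I\) with the Hall words \(v(I)w\) pairwise distinct, and read the covector off from the coefficients of \(P\). The paper's proof stops exactly there.

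Your third paragraph goes beyond the paper, and the issue you isolate is real: reading off the covector presupposes that the components \(\covector_{v(I)w}\) are independent coordinates on \(\mathfrak{g}^*\), i.e.\ that the classes of the \(X_{v(I)w}\) remain linearly independent modulo \([\lowercentralseriesterm{\freelie{r,s}}{m},\lowercentralseriesterm{\freelie{r,s}}{m}]\). The paper assumes this silently; it amounts to saying that this quotient is \emph{compatible} with the deg-left-right Hall set in the sense of Definition~\ref{definition-carnot-group-compatible-with-hall-basis}, with the ideal spanned by the Hall trees whose top-level left subtree has degree \(\geq m\) --- precisely the structural claim you formulate. However, you do not prove that claim, and the induction you sketch does not close up as stated: straightening \([X_p,X_q]\) with \(\deg(p),\deg(q)\geq m\) and \(q=(q_1,q_2)\) via Jacobi produces \([[X_p,X_{q_1}],X_{q_2}]+[X_{q_1},[X_p,X_{q_2}]]\), where \(\deg(q_1)\) and \(\deg(q_2)\) may be smaller than \(m\); the intermediate terms are therefore not brackets of two elements of \(\lowercentralseriesterm{\freelie{r,s}}{m}\), and the invariant ``every Hall tree produced keeps a left subtree of degree \(\geq m\)'' cannot be verified term by term. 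A workable repair needs either a stronger inductive hypothesis, or a dimension count (the Hall elements with left subtree of degree \(\lt m\) automatically \emph{span} \(\mathfrak{g}\), since the remaining ones visibly lie in the ideal, so it suffices to show \(\dim[\lowercentralseriesterm{\freelie{r,s}}{m},\lowercentralseriesterm{\freelie{r,s}}{m}]\) is at most the number of Hall trees with left subtree of degree \(\geq m\)), or an appeal to known bases of free polynilpotent (metabelian-type) Lie algebras. In short: your construction is the paper's; your additional observation exposes a step the paper leaves implicit; but your proposed justification of that step is a sketch with a genuine obstacle, not a proof.
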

\begin{proof}\label{g:proof:idp23}
For each monomial \(x^I\), let \(x_{v_1}^{i_1}\cdots x_{v_\ell}^{i_\ell} = x^I\) be the decomposition such that \(v_1\lt\cdots\lt v_\ell\). Denote by \(v(I) := (v_\ell)^{i_\ell}\cdots (v_1)^{i_1}\) the resulting word. The factorization of the minimal Hall word \(w=1^{m-1}2\) into two Hall words is \(w=(1)(1^{m-2}2)\). Since \(1\) is the minimal deg-left-right Hall word, every monomial \(x^I\) satisfies the condition in {Lemma~{\ref{lemma-free-monomials-of-abnormal-polynomials}}}. Therefore for every monomial \(x^I\), the word \(v(I)w\) is a Hall word, and the {abnormal polynomial} \(\abnormalpolynomial{w}{\covector}\) has the particularly simple form
\begin{equation*}
\abnormalpolynomial{w}{\covector}(x) = \sum_{I}\frac{1}{I!}\covector_{v(I)w}x^{I}\text{,}
\end{equation*}
where \(I!=i_1!\cdots i_\ell!\) for each tuple \(I=(i_1,\ldots,i_\ell)\). Since the coefficients \(\covector_{v(I)w}\) are all distinct, the required covector is defined by the monomial coefficients of the target polynomial \(P\).
\end{proof}
\begin{proposition}\label{proposition-existence-of-polynomial-factor}
Let \(\hallset\) be the deg-left-right Hall set on the free Lie algebra of rank \(r\). Let \(Q\in\polyring{\hallset}\) be a nonzero polynomial containing only variables of degrees at most \(m-1\). Then there exists a step \(s\geq m\) and a covector \(\covector\in\freelie{r,s}^*\) such that \(Q\) is a common factor of all {abnormal polynomials} \(\abnormalpolynomial{X}{\covector}\) for \(X\in\freelielayer{r,s}{m}\), and at least one of these abnormal polynomials is nonzero. Moreover, the step \(s\) only depends on the rank \(r\), the integer \(m\), and the degree of \(Q\), but not on the polynomial \(Q\) itself.
\end{proposition}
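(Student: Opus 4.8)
The plan is to fix the Carnot group $G$ whose Lie algebra is the quotient $\mathfrak{g}=\freelie{r,s}/[\lowercentralseriesterm{\freelie{r,s}}{m},\lowercentralseriesterm{\freelie{r,s}}{m}]$ of {Lemma~{\ref{lemma-quotient-by-higher-variables}}}, so that every abnormal polynomial $\abnormalpolynomial{X}{\covector}$ with $X\in\lielayer{\mathfrak{g}}{m}$ involves only the variables of degrees $1,\ldots,m-1$ and has weighted degree at most $N:=s-m$. Writing $V_k$ for the space of polynomials in these variables of weighted degree at most $k$ and $d:=\deg Q$, the requirement that $Q$ divide $\abnormalpolynomial{X_w}{\covector}$ for a Hall word $w$ of degree $m$ is exactly the condition that the image of $\abnormalpolynomial{X_w}{\covector}$ in the finite-dimensional quotient $V_N/(Q\cdot V_{N-d})$ vanish. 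Since the coefficients of $\abnormalpolynomial{X_w}{\covector}$ are linear in $\covector$ by {Lemma~{\ref{lemma-abnormal-polynomials-in-exponential-coordinates}}}, and since these polynomials only see the components of $\covector$ of degree $\geq m$, imposing this for every Hall word $w$ of degree $m$ gives a homogeneous linear system in the covector components $\covector_u$, $\deg u\geq m$. First I would record that it suffices to produce a nonzero solution of this system: a covector all of whose components of degree $\geq m$ vanish is precisely one vanishing on $\lowercentralseriesterm{\mathfrak{g}}{m}$, and since $\{\Ad{g}X_w : \deg w = m,\ g\in G\}$ spans $\lowercentralseriesterm{\mathfrak{g}}{m}$, any covector making every $\abnormalpolynomial{X_w}{\covector}$ identically zero must be the zero covector. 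Hence any nonzero solution automatically leaves some abnormal polynomial nonzero, as required by {Remark~{\ref{remark-nonzero-polynomial-vs-covector}}}.

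The heart of the argument is then a dimension count showing the system is underdetermined for large $s$. For the number of equations, the divisibility condition for each of the $\dim\freelielayer{r}{m}$ Hall words $w$ of degree $m$ contributes at most $\dim\big(V_N/(Q\cdot V_{N-d})\big)$ constraints; since the weighted polynomial ring is an integral domain, multiplication by the nonzero polynomial $Q$ is injective, so $\dim(Q\cdot V_{N-d}) = \dim V_{N-d}$ and the total number of equations is at most $\dim\freelielayer{r}{m}\,(\dim V_N - \dim V_{N-d})$. For the number of unknowns I would use the minimal word $w_0=1^{m-1}2$: by {Lemma~{\ref{lemma-free-abnormal-polynomial}}} the coefficients of $\abnormalpolynomial{X_{w_0}}{\covector}$ are $\dim V_N$ distinct covector components, so the system has at least $\dim V_N$ unknowns.

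It remains to compare these two quantities as $s\to\infty$. By {Lemma~{\ref{lemma-poincare-series-for-monomials-of-bounded-degree}}} the Poincaré series of the variables of degrees $1,\ldots,m-1$ is $\prod_{k=1}^{m-1}(1-t^k)^{-\dim\freelielayer{r}{k}}$, whose pole at $t=1$ has order $C:=\sum_{k=1}^{m-1}\dim\freelielayer{r}{k}$; consequently $\dim V_N$ is a quasi-polynomial in $N$ of degree $C$, whereas the difference $\dim V_N - \dim V_{N-d}$ telescopes to a sum of $d$ consecutive coefficients and hence is a quasi-polynomial of degree $C-1$. Therefore $\dim V_N$ eventually dominates $\dim\freelielayer{r}{m}\,(\dim V_N - \dim V_{N-d})$, and for all $s$ beyond a threshold depending only on $r$, $m$ and $d=\deg Q$ the number of unknowns strictly exceeds the number of equations. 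The homogeneous system then has a nonzero solution $\covector$, which by the first paragraph yields the desired common factor together with a nonzero abnormal polynomial, with $s$ depending only on $r$, $m$ and $\deg Q$.

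The step I expect to be the main obstacle is precisely this degree gap: verifying that fixing $Q$ costs only a codimension that grows one polynomial order more slowly in $N$ than the supply of covector components. This is where the bookkeeping of pole orders of the Poincaré series is essential, as is the (easy but necessary) injectivity of multiplication by $Q$ guaranteeing that the codimension is exactly $\dim V_N - \dim V_{N-d}$ rather than something larger. A secondary point to handle with care is that the equations coming from different Hall words $w$ of degree $m$ genuinely share unknowns, so the count must be made on the full covector rather than word by word; but since the bound above already compares total unknowns with total equations, this coupling does not affect the conclusion.
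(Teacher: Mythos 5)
Your argument is correct and rests on the same pillars as the paper's proof of Proposition~\ref{proposition-existence-of-polynomial-factor} --- the quotient $\mathfrak{g}=\freelie{r,s}/[\lowercentralseriesterm{\freelie{r,s}}{m},\lowercentralseriesterm{\freelie{r,s}}{m}]$ of Lemma~\ref{lemma-quotient-by-higher-variables}, Lemma~\ref{lemma-free-abnormal-polynomial}, and the Poincar\'e series of Lemma~\ref{lemma-poincare-series-for-monomials-of-bounded-degree} --- but the linear algebra is organized genuinely differently. Writing $q=\deg Q$ and $d=\dim\freelielayer{r}{m}$ (note you used $d$ for $\deg Q$), the paper introduces cofactor polynomials $\factorpolynomial{\factorcoefficient{i}}$ with indeterminate coefficients, poses the system $\abnormalpolynomial{w_{i}}{\covector}=\factorpolynomial{\factorcoefficient{i}}Q$ in the joint unknowns $(\covector,\factorcoefficient{})$, eliminates the $\covector$-variables together with the first equation via Lemma~\ref{lemma-free-abnormal-polynomial}, compares $d\dim V_{N-q}$ surviving variables against $(d-1)\dim V_N$ surviving equations, and reads off nonvanishing of some abnormal polynomial from $\overline{\factorcoefficient{}}\neq 0$. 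You instead keep the covector as the sole unknown, encode divisibility as vanishing in $V_N/(Q\cdot V_{N-q})$ so that each of the $d$ Hall words costs only $\dim V_N-\dim V_{N-q}$ conditions, use Lemma~\ref{lemma-free-abnormal-polynomial} to lower-bound the number of unknowns by $\dim V_N$, and obtain nonvanishing from the spanning of $\lowercentralseriesterm{\mathfrak{g}}{m}$ by the $\Ad{g}$-orbits, i.e.\ Remark~\ref{remark-nonzero-polynomial-vs-covector}. Amusingly, the two bookkeepings produce the identical margin $\dim V_N-d\,(\dim V_N-\dim V_{N-q})$, which is exactly what the paper's identity $\Delta(t)=t^m\poincare(t)-dt^m(1-t^q)\poincare(t)$ encodes; your quasi-polynomial degree comparison and the paper's pole-order argument at $t=1$ are interchangeable ways of seeing that this quantity tends to infinity, and both yield a threshold depending only on $r$, $m$, $q$. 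Your route buys a cleaner system (no auxiliary unknowns) and a more conceptual nonvanishing argument; the paper's buys the explicit cofactors $\factorpolynomial{\factorcoefficient{i}}$ that reappear in Algorithm~\ref{algorithm-abnormal-trajectory}. Two small points to fold in: since the statement concerns $\freelie{r,s}$ rather than the quotient, finish by pulling the covector back along the projection and citing Lemma~\ref{lemma-lifting-quotient-abnormal-polynomials} so the coordinate expressions, hence the factorizations, are unchanged; and the equivalence of ``$Q$ divides $\abnormalpolynomial{X_w}{\covector}$'' with vanishing in $V_N/(Q\cdot V_{N-q})$ needs the one-line remark that any cofactor automatically has weighted degree at most $N-q$, by additivity of weighted degree in the integral domain $\polyring{\hallset}$. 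Neither is a gap.
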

\begin{proof}\label{g:proof:idp24}
Fix a step \(s\geq m\), which will be specified more precisely later. Consider the Carnot group of step \(s\) whose Lie algebra is the quotient \(\mathfrak{g} = \freelie{r,s}/[\lowercentralseriesterm{\freelie{r,s}}{m},\lowercentralseriesterm{\freelie{r,s}}{m}]\) eliminating variables of degree \(m\) and higher. Then by {Lemma~{\ref{lemma-quotient-by-higher-variables}}}, all abnormal polynomials \(\abnormalpolynomial{X}{\covector}\) for \(X\in\lielayer{\mathfrak{g}}{m}\) in \(G\) only depend on the monomials with terms \(x_w\) of degree at most \(\deg(w)\leq m\). Moreover, by {Lemma~{\ref{lemma-lifting-quotient-abnormal-polynomials}}}, if the claim of the proposition holds in the quotient Carnot group \(G\), then the claim also holds in the free Carnot group of rank \(r\) and step \(s\).

Denote \(d:=\dim\lielayer{\mathfrak{g}}{m}\) and let \(w_{1}\lt\ldots\lt w_{d}\) be all the {deg-left-right Hall words} of degree \(m\). Denote \(q:=\deg(Q)\) and define for each Hall word \(w_{i}\) a generic polynomial
\begin{equation*}
\factorpolynomial{\factorcoefficient{i}} := \sum_{\abs{I}\leq s-m-q}\factorcoefficient{i,I}x^I
\end{equation*}
of (weighted) degree \(s-m-q\) with indeterminate coefficients \(\factorcoefficient{i,I}\). Consider the homogeneous linear system
\begin{equation*}
\abnormalpolynomial{w_{i}}{\covector} = \factorpolynomial{\factorcoefficient{i}}Q,\quad i=1,\ldots,d
\end{equation*}
in the variables \(\covector,\factorcoefficient{}\).

By {Lemma~{\ref{lemma-free-abnormal-polynomial}}}, the abnormal polynomial \(\abnormalpolynomial{w_{1}}{\covector}\) for the minimal deg-left-right Hall word \(w_{1}=1^{m-1}2\) can be freely chosen. This defines some of the unknowns \(\covector_{w}\) in terms of the indeterminates \(\factorcoefficient{1,I}\). Adding in an arbitrary substitution of the rest of the \(\covector_{w}\) variables from other equations of the system gives a substitution \(\covector = \covector(\factorcoefficient{})\) such that at least the identity \(\abnormalpolynomial{w_{1}}{\covector(\factorcoefficient{})} = \factorpolynomial{\factorcoefficient{1}}Q\) becomes trivial. Hence the substitution leads to a reduced system in the variables \(\factorcoefficient{}\) and the equations
\begin{equation}
\abnormalpolynomial{w_{i}}{\covector(\factorcoefficient{})} = \factorpolynomial{\factorcoefficient{i}}Q,\quad i=2,\ldots,d\text{.}\label{eq-abnormal-factor-system}
\end{equation}

Since the system is homogeneous, it always has the zero solution. The existence of a nontrivial solution in a large enough step \(s\) will be proved by counting the number equations and variables. For each \(i=2,\ldots,d\), every monomial \(x^I\) appearing in the polynomial \(\abnormalpolynomial{w_{i}}{\covector(\factorcoefficient{})} - \factorpolynomial{\factorcoefficient{i}}Q\) contributes one constraint on the variables \(\factorcoefficient{}\). Hence the number of equations is at most \(d-1\) times the number of monomials of degree up to \(\deg(\abnormalpolynomial{w_{i}}{\covector(\factorcoefficient{})} - \factorpolynomial{\factorcoefficient{i}}Q) \leq s-m\). The number of variables is instead \(d\) times the number of monomials of degree up to \(\deg(\factorpolynomial{\factorcoefficient{i}}) = s-m-q\).

By {Lemma~{\ref{lemma-poincare-series-for-monomials-of-bounded-degree}}}, the Poincaré series for the number of monomials in the variables of degrees \(1,\ldots,m-1\) is
\begin{equation*}
\poincare(t) = 1/\prod_{k=1}^{m-1}(1-t^k)^{\dim\freelielayer{r}{k}} =: \sum_{k=0}N_kt^k\text{.}
\end{equation*}
In terms of the series coefficients \(N_k\), the number of equations is at most
\begin{equation*}
E_s := (d-1)\sum_{k=0}^{s-m}N_k = \sum_{k=m}^s(d-1)N_{k-m}
\end{equation*}
and the number of variables is
\begin{equation*}
V_s := d\sum_{k=0}^{s-m-q}N_k = \sum_{k=m+q}^sdN_{k-m-q}\text{.}
\end{equation*}
A telescoping argument gives the identities
\begin{equation*}
(d-1)t^m\poincare(t) = \sum_{k=m}^\infty (d-1)N_{k-m}t^{k} = \sum_{k=0}^\infty (E_{k}-E_{k-1})t^{k}
\end{equation*}
and
\begin{equation*}
dt^{m+q}\poincare(t) = \sum_{k=m+q}^\infty dN_{k-m-q}t^{k} = \sum_{k=0}^\infty (V_{k}-V_{k-1})t^{k}.
\end{equation*}
That is, the difference \(V_s-E_s\) is the partial sum \(\sum_{k=0}^s\differenceinteger{k}\) of the coefficients of the series
\begin{align*}
\Delta(t) = \sum_{k=0}^\infty \differenceinteger{k}t^k \amp := dt^{m+q}\poincare(t)-(d-1)t^m\poincare(t)\\
\amp = t^m\poincare(t) - dt^m(1-t^q)\poincare(t)\text{.}
\end{align*}

The rational function \(\poincare(t)\) has a pole at \(t=1\) with \(\lim_{t\to 1-}\poincare(t)=\infty\). Since \((1-t^q)\poincare(t)\simeq (1-t)\poincare(t)\) is of higher order than \(\poincare(t)\) at \(t=1\), it follows that \(\Delta(t)\simeq \poincare(t)\) near \(t=1\). In particular
\begin{equation*}
\sum_{k=0}^\infty \differenceinteger{k}=\lim_{t\to 1-}\Delta(t) = \infty\text{.}
\end{equation*}
That is, the partial sums \(V_s-E_s = \sum_{k=0}^s \differenceinteger{k}\) tend to infinity, so there exists some large enough \(s\in \NN\) for which the number of variables \(V_s\) is strictly bigger than the upper bound \(E_s\) for the number of equations.

For such a step \(s\), the linear system {({\ref{eq-abnormal-factor-system}})} is underdetermined and has a nontrivial solution \(\overline{\factorcoefficient{}}\neq 0\), defining a solution covector \(\covector(\overline{\factorcoefficient{}})\in\mathfrak{g}^*\). Since \(\overline{\factorcoefficient{}}\neq 0\), at least one of the polynomials \(\factorpolynomial{\overline{\factorcoefficient{i}}}\) is nonzero. By assumption \(Q\) is nonzero, so it follows that at least one abnormal polynomial \(\abnormalpolynomial{w_{i}}{\covector(\overline{\factorcoefficient{}})} = \factorpolynomial{\overline{\factorcoefficient{i}}}Q\) is nonzero.

The final claim on the dependence of \(s\) follows from the construction of the coefficients \(\differenceinteger{k}\) through the generating function \(\Delta\). The rational function \(\Delta\) was determined by the integers \(r\), \(m\), \(q=\deg(Q)\), and the dimensions of the layers \(\freelielayer{r}{1},\ldots,\freelielayer{r}{m}\) of the free Lie algebra of rank \(r\). The dimensions of the layers are completely determined by \(r\), so the claim follows.
\end{proof}

\typeout{************************************************}
\typeout{Section 4 Proofs of the main theorems}
\typeout{************************************************}

\section{Proofs of the main theorems}\label{section-main-proofs}

\typeout{************************************************}
\typeout{Subsection 4.1 Abnormality of ODE trajectories}
\typeout{************************************************}

\subsection{Abnormality of ODE trajectories}\label{subsection-conclusion-of-the-proof-of-abnormality}
In {Theorem~{\ref{theorem-every-ode-trajectory-is-abnormal}}} the Carnot group \(F\) is only concerned with the degree of the polynomial ODE and not the particular ODE itself. Making this dependency explicit leads to the following stronger statement.
\begin{theorem}\label{theorem-every-ode-trajectory-is-abnormal-quantified}
For every \(r\geq 2\) and \(d\in\NN\) there exists \(s\in\NN\) such that the following holds:

Let \(P\colon G\to TG\) be a polynomial vector field in a Carnot group \(G\) of rank \(r\) and let \(P=\sum_{i=1}^nP_i\tilde{X}_i\) be its expansion in a left-invariant frame \(\tilde{X}_i\) of vector fields adapted to the stratification on \(G\). Suppose that the polynomials \(P_i\colon G\to\RR\) for the horizontal components \(i=1,\ldots,r\) have bounded degree \(\deg(P_i)\leq d\). Then all the horizontal trajectories of the ODE \(\dot{x} = P(x)\) {lift} to abnormal curves in the free Carnot group \(\freecarnot{r,s}\) of rank \(r\) and step \(s\).
\end{theorem}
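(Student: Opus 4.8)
The plan is to exhibit, for each trajectory, a single polynomial that is simultaneously a first integral of the lifted ODE and a common factor of all abnormal polynomials of one fixed layer; abnormality of the lift then follows from Lemma~\ref{lemma-curves-annihilating-higher-order-abnormal-polynomials}. First I would pass to the free setting. Let $P_1,\dots,P_r\in\polyring{\hallset}$ be the horizontal components, viewed as polynomials in the variables of degree at most $d$. In coordinates adapted to the Hall set the projection $\freecarnot{r,s}\to G$ preserves the low-degree variables, so the horizontal lift $\tilde\gamma$ of a trajectory $\gamma$ has the same controls $u_i=P_i\circ\gamma=P_i\circ\tilde\gamma$ and is again a trajectory of the lifted ODE. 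Using the hypothesis $r\geq 2$, I set $Q_1:=P_2$, $Q_2:=-P_1$, and $Q_3=\dots=Q_r:=0$, so that $\sum_{i=1}^r P_iQ_i=P_1P_2-P_2P_1=0$ identically.

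Next I would build the first integral. By Lemma~\ref{lemma-vector-of-polynomials-as-derivatives-in-free-lie-algebra} there is a polynomial $Q\in\polyring{\hallset}$ with $X_iQ=Q_i$ for all $i$; since the action $\freelieaction$ lowers the weighted degree by one, $Q$ has weighted degree at most $d+1$. Along the lift, $\frac{d}{dt}(Q\circ\tilde\gamma)=\sum_i u_i\,(X_iQ)\circ\tilde\gamma=\bigl(\sum_i P_iQ_i\bigr)\circ\tilde\gamma=0$, so $Q$ is constant along $\tilde\gamma$, say $Q\circ\tilde\gamma\equiv c$. The decisive point is to replace $Q$ by $Q-c$: this polynomial vanishes identically on $\tilde\gamma$, is nonzero whenever the ODE is nontrivial, and involves exactly the same variables, all of degree at most $d+1$.

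I would then produce the covector. Fix the layer $m:=d+2$, so that $Q-c$ involves only variables of degree at most $m-1$, and apply Proposition~\ref{proposition-existence-of-polynomial-factor} to $Q-c$. This gives a step $s$ depending only on $r$, $m$ and $\deg(Q-c)\leq d+1$, hence only on $r$ and $d$ (enlarge $s$ if necessary so that the homomorphism $\freecarnot{r,s}\to G$ exists; this is the only dependence on $G$), together with a covector $\covector\in\freelie{r,s}^*$ satisfying $\abnormalpolynomial{X}{\covector}=S_X\cdot(Q-c)$ for every $X\in\freelielayer{r,s}{m}$, with at least one $\abnormalpolynomial{X}{\covector}$ nonzero. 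Hence $\abnormalpolynomial{X}{\covector}\circ\tilde\gamma=(S_X\circ\tilde\gamma)\,((Q-c)\circ\tilde\gamma)\equiv 0$ for all $X\in\freelielayer{r,s}{m}$. To meet the base-point hypothesis of Lemma~\ref{lemma-curves-annihilating-higher-order-abnormal-polynomials} I would use that abnormality is left-invariant: from $\abnormalpolynomial{X}{\covector}\circ L_p=\abnormalpolynomial{X}{\Ad{p}^*\covector}$ it follows that a left translate of an abnormal curve is abnormal. Writing $\tilde p:=\tilde\gamma(t_0)$ for the initial point and $\hat\gamma:=L_{\tilde p^{-1}}\tilde\gamma$, the curve $\hat\gamma$ has the identity in its closure, the covector $\hat\covector:=\Ad{\tilde p}^*\covector$ satisfies $\abnormalpolynomial{X}{\hat\covector}\circ\hat\gamma=\abnormalpolynomial{X}{\covector}\circ\tilde\gamma\equiv 0$ for $X\in\freelielayer{r,s}{m}$, and $\hat\covector(\lowercentralseriesterm{\freelie{r,s}}{m})=\covector(\lowercentralseriesterm{\freelie{r,s}}{m})\neq 0$ since $\Ad{\tilde p}$ preserves the lower central series, so by Remark~\ref{remark-nonzero-polynomial-vs-covector} some $\abnormalpolynomial{X}{\hat\covector}$ is nonzero. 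Lemma~\ref{lemma-curves-annihilating-higher-order-abnormal-polynomials} then shows $\hat\gamma$ is abnormal, and therefore so is $\tilde\gamma=L_{\tilde p}\hat\gamma$.

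The main obstacle is the tension between the two roles forced on $Q$: as a first integral it is only guaranteed to be constant along $\tilde\gamma$, whereas the common-factor mechanism requires the curve to lie in the zero set $\{Q=0\}$, and Lemma~\ref{lemma-curves-annihilating-higher-order-abnormal-polynomials} in addition requires the identity in the closure of the curve. The constant shift $Q\mapsto Q-c$ resolves the first issue without leaving the scope of Proposition~\ref{proposition-existence-of-polynomial-factor}, precisely because it alters neither the variables nor the degree of the polynomial, and the left-translation to the identity resolves the second. Once these are in place the uniform control of the step is immediate, since $\deg(Q-c)\leq d+1$ is bounded independently of the particular ODE.
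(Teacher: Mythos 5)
Your argument is correct and follows essentially the same route as the paper's proof: reduce to the free group, build a first integral \(Q\) via Lemma~\ref{lemma-vector-of-polynomials-as-derivatives-in-free-lie-algebra} from an orthogonal vector of polynomials, shift by a constant so the trajectory lies in \(\{Q=0\}\), apply Proposition~\ref{proposition-existence-of-polynomial-factor} with \(m=d+2\), and conclude with Lemma~\ref{lemma-curves-annihilating-higher-order-abnormal-polynomials} after reducing to a trajectory with the identity in its closure (the paper re-runs the argument on the left-translated ODE rather than conjugating the covector by \(\Ad{p}^*\), but the two are equivalent). The one detail to patch is your explicit choice \((Q_1,Q_2)=(P_2,-P_1)\), which degenerates to the zero polynomial when \(P_1\equiv P_2\equiv 0\) but some other \(P_i\not\equiv 0\) (possible for \(r\geq 3\)); pick the index pair according to a nonvanishing component, or, if all \(P_i\equiv 0\), take any nonzero orthogonal vector, which is why the paper only asserts existence of a nonzero \((Q_1,\dots,Q_r)\) with \(\sum P_iQ_i=0\).
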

\begin{proof}\label{g:proof:idp25}
By the expansion of the vector field \(P\) in the left-invariant frame, if \(x\colon(a,b)\to G\) is a horizontal trajectory of the vector field \(P\), then the derivative of the trajectory is
\begin{equation}
\frac{d}{dt}x(t) = P(x(t)) = \sum_{i=1}^r P_i(x(t))\tilde{X}_i(x(t))\text{.}\label{eq-horizontal-trajectory}
\end{equation}

Since the statement of the theorem only concerns horizontal trajectories there is no loss of generality in assuming that the other coefficients \(P_{r+1},\ldots,P_n\) are all zero. Then the vector field \(P\) has a well defined lift from \(G\) to a horizontal vector field \(\tilde{P}\) in the free Carnot group of rank \(r\) and step equal to the step of \(G\), and all horizontal trajectories of \(P\) lift to trajectories of the vector field \(\tilde{P}\). Hence the claim holds if and only if the claim holds when \(G\) is a free Carnot group. In particular, there is no loss of generality in assuming that \(G\) is a Carnot group that is {compatible} with the {deg-left-right Hall set} \(\hallset\). By considering coordinates {adapted} to the Hall set \(\hallset\), the polynomials \(P_1,\ldots,P_r\) are identified with elements of {\(\polyring{\hallset}\)}.

Let \(Q_{1},\ldots,Q_{r}\in\polyring{\hallset}\) be polynomials such that not all \(Q_{i}\) are zero, \(\sum P_iQ_{i} = 0\), and \(\max_i\deg(Q_{i})\leq \max_i\deg(P_i)\leq d\). By {Lemma~{\ref{lemma-vector-of-polynomials-as-derivatives-in-free-lie-algebra}}}, there exists a polynomial \(Q\in\polyring{\hallset}\) such that \(X_iQ=Q_{i}\). Let \(x(t)\) be a horizontal trajectory of \(P\) whose closure contains the identity. Since \(\sum P_i(X_iQ)=0\), the identity {({\ref{eq-horizontal-trajectory}})} implies that the polynomial \(Q\) is constant along the horizontal trajectory \(x(t)\). Adding in a constant if necessary, there is no loss of generality in assuming that \(Q(x(t))\equiv 0\).

From the construction of {Lemma~{\ref{lemma-vector-of-polynomials-as-derivatives-in-free-lie-algebra}}}, it follows that the polynomial \(Q\) only contains variables of degrees at most \(\max_i\deg(Q_{i})+1\leq d+1\) and the degree \(\deg(Q)\) is bounded by the same quantity. Let \(m:=d+2\). By {Proposition~{\ref{proposition-existence-of-polynomial-factor}}}, there exists some high enough step \(s\geq m\) and a covector \(\covector\in\freelie{r,s}^*\) such that the polynomial \(Q\) is a common factor for all abnormal polynomials \(\abnormalpolynomial{X}{\covector}\) for \(X\in\freelielayer{r,s}{m}\). Since the closure of the trajectory \(x(t)\) by assumption contains the identity, {Lemma~{\ref{lemma-curves-annihilating-higher-order-abnormal-polynomials}}} implies that the trajectory \(x(t)\) has an abnormal lift in the free Carnot group \(\freecarnot{r,s}\).

For an arbitrary horizontal trajectory \(x(t)\) of the vector field \(P\) in \(G\), let \(z\in G\) be any point in the closure of the trajectory, and let \(y(t):=L_{z^-1}x(t)\) be the left translation of the trajectory \(x(t)\). The derivative of the translated trajectory is
\begin{equation*}
\frac{d}{dt}y(t) = \sum_{i=1}^r P_i(x(t))\Big((L_{z^{-1}})_*\tilde{X}_i(x(t))\Big)
= \sum_{i=1}^r P_i(L_{z}y(t))\tilde{X}_i(y(t))\text{.}
\end{equation*}

Since left translations preserve the degrees of polynomials, the earlier argument implies that there exists a polynomial \(Q_z\in\polyring{\hallset}\) such that \(Q_z(y(t))\equiv 0\) and the degree of \(Q_z\) and maximum degree of variables of \(Q_z\) is again strictly less than \(m\). Repeating the rest of the argument, {Proposition~{\ref{proposition-existence-of-polynomial-factor}}} implies that the trajectory \(y(t)\) also has an abnormal lift in the free Carnot group \(\freecarnot{r,s}\) for the same step \(s\geq m\) as before. Since abnormality is preserved under left translation, it follows that the original horizontal trajectory \(x(t)\) of \(P\colon \freecarnot{r,s}\to T\freecarnot{r,s}\) lifts to an abnormal curve in \(F\), proving the claim.
\end{proof}
An explicit value for the abnormality step \(s\) of {Theorem~{\ref{theorem-every-ode-trajectory-is-abnormal-quantified}}} can be computed from the rank \(r\) and degree bound \(d\) as described in {Algorithm~{\ref{algorithm-abnormal-trajectory}}}. The resulting values of \(s\) for some of the smallest values of \(r\) and \(d\) are collected in {Figure~{\ref{table-nilpotency-step-bounds}}}.

\typeout{************************************************}
\typeout{Subsection 4.2 Concatenation of abnormals}
\typeout{************************************************}

\subsection{Concatenation of abnormals}\label{subsection-concatenation-of-abnormals}
\begin{proof}[Proof of Theorem~{\ref{theorem-concatenation-of-abnormals}}.]\label{g:proof:idp26}
Let \(r\) be the rank of \(G\) and let \(s\) be the nilpotency step of \(G\). Since the abnormal curves in \(G\) lift to the free Carnot group \(\freecarnot{r,s}\) of rank \(r\) and step \(s\), replacing \(G\) with \(\freecarnot{r,s}\) if necessary, there is no loss of generality in assuming that \(G\) is {compatible} with the {deg-left-right Hall set} \(\hallset\).

\label{g:notation:idp27} Let \(\alpha\colon[a,b]\to G\) and \(\beta\colon[c,d]\to G\) be two abnormal curves. The concatenation \(\alpha\star\beta\colon[a,b-c+d]\to G\) in the group \(G\) is defined by translating \(\beta(c)\) to \(\alpha(b)\) and then concatenating curves in the usual way. Since abnormality is preserved by left translation and reparametrization, it suffices to consider the case \([a,b]=[-1,0]\), \([c,d]=[0,1]\), and \(\alpha(0)=\beta(0)=\identity{G}\). Then the concatenation \(\alpha\star\beta\colon[-1,1]\to G\) is defined by
\begin{equation*}
\alpha\star\beta(t) = \begin{cases}\alpha(t),\amp t\leq 0\\\beta(t),\amp t\geq 0,\end{cases}\text{.}
\end{equation*}

By the characterization of {Lemma~{\ref{lemma-curves-in-abnormal-variety-are-abnormal}}}, abnormality means that there exist nonzero covectors \(\covector_\alpha,\covector_\beta\in\mathfrak{g}^*\) such that \(\abnormalpolynomial{X}{\covector_\alpha}\circ\alpha\equiv 0\) and \(\abnormalpolynomial{X}{\covector_\beta}\circ\beta\equiv 0\) for all horizontal vectors \(X\in\lielayer{\mathfrak{g}}{1}\).

Let \(X,Y\in\lielayer{\mathfrak{g}}{1}\) be such that the abnormal polynomials \(\abnormalpolynomial{X}{\covector_\alpha}\) and \(\abnormalpolynomial{Y}{\covector_\beta}\) are nonzero, and let \(Q:= \abnormalpolynomial{X}{\covector_\alpha}\abnormalpolynomial{Y}{\covector_\beta}\) be the product of the two. By {Proposition~{\ref{proposition-existence-of-polynomial-factor}}} there exists a step \(s_2\geq s\) and a covector \(\covectorb\in\freelie{r,s_2}^*\) such that \(Q\) is a common factor of all the abnormal polynomials \(\abnormalpolynomial{Z}{\covectorb}\) for \(Z\in\freelielayer{r,s_2}{s}\). By construction \(Q\circ(\alpha\star\beta)\equiv 0\), so {Lemma~{\ref{lemma-curves-annihilating-higher-order-abnormal-polynomials}}} then implies that the concatenation \(\alpha\star\beta\) has an abnormal lift in the free Carnot group \(F=\freecarnot{r,s_2}\).

By {Proposition~{\ref{proposition-existence-of-polynomial-factor}}} the step \(s_2\) depends only on the variables and degree of \(Q\in\polyring{\hallset}\). Since the step of the Carnot group \(G\) is \(s\), the polynomials \(\abnormalpolynomial{X}{\covector_\alpha}\) and \(\abnormalpolynomial{X}{\covector_\beta}\) have (weighted) degree at most \(s-1\) and hence \(Q\) has degree at most \(2s-2\) regardless of the specific covectors \(\covector_\alpha\) and \(\covector_\beta\). Since the variables are necessarily some subset of the variables of the group \(G\), it follows that the step \(s_2\) has an upper bound that depends only on the group \(G\), and not on the individual curves \(\alpha\) and \(\beta\).
\end{proof}

\typeout{************************************************}
\typeout{Section 5 Examples}
\typeout{************************************************}

\section{Examples}\label{section-examples}
The proof of {Theorem~{\ref{theorem-every-ode-trajectory-is-abnormal-quantified}}} of abnormality of ODE trajectories gives a constructive method to find any polynomial ODE trajectory as an abnormal curve. This section covers the method more concretely, showing how the correct Carnot group and covector is found for some specific examples. The practical version of the proof of {Theorem~{\ref{theorem-every-ode-trajectory-is-abnormal-quantified}}} is the following algorithm.
\begin{algorithm}\label{algorithm-abnormal-trajectory}
Input: a polynomial ODE \(\dot{x}=P(x)\) as a vector of polynomials \((P_1,\ldots,P_r)\). Output: a step \(s\) and a covector \(\covector\) such that trajectories of the ODE through the identity are abnormal with covector \(\covector\) in the free Carnot group \(\freecarnot{r,s}\) of rank \(r\) and step \(s\).
\begin{enumerate}[label=\arabic*.]
\item\label{algorithm-1-orthogonal-poly}Choose a nonzero vector of polynomials \((Q_{1},\ldots,Q_{r})\) orthogonal to \((P_1,\ldots,P_r)\). Let \(Q\in\polyring{\hallset}\) be an (abstract) polynomial such that \(X_iQ=Q_{i}\).
\item\label{algorithm-2-nonzero-derivatives}Compute all nonzero higher order derivatives \(X_{w_{i}}Q\) for {deg-left-right Hall words} \(w_{i}\) of degree \(\geq 2\).
\item\label{algorithm-3-pde-integration}Solve the resulting PDE for the polynomial \(Q\) by sequentially integrating in each variable \(x_{w_{i}}\) in decreasing deg-left-right Hall order. Let \(m\geq 2\) be such that \(Q\) only contains variables \(x_{w}\) of degrees at most \(m-1\).
\item\label{algorithm-4-sufficient-step}Find a step \(s\geq m\) such that \(\sum_{k=0}^{s}\differenceinteger{k} \geq 1\), where the integers \(\differenceinteger{k}\) are determined by the generating function
\begin{equation*}
\Delta(t) = \sum_{k=0}^\infty \differenceinteger{k}t^k = \frac{\Big(1-(\dim \freelielayer{r}{m})(1-t^{\deg(Q)})\Big)t^m}{\prod_{k=1}^{m-1}(1-t^k)^{\dim\freelielayer{r}{k}}}\text{.}
\end{equation*}

\item\label{algorithm-5-abnormal-polys}Compute the polynomials \(R_i := \abnormalpolynomial{w_{i}}{\covector}-\factorpolynomial{\factorcoefficient{i}}Q\), where
\begin{itemize}[label=\textbullet]
\item{}\(w_{1},\ldots,w_{\dim \freelielayer{r}{m}}\) are all the deg-left-right Hall words of degree \(m\),
\item{}\(\covector\in\mathfrak{g}^*\) is a covector in the dual of the quotient \(\mathfrak{g} = \freelie{r,s}/[\lowercentralseriesterm{\freelie{r,s}}{m},\lowercentralseriesterm{\freelie{r,s}}{m}]\) with indeterminate coefficients \(\covector_{w}\), and
\item{}\(\factorpolynomial{\factorcoefficient{i}}\) are generic polynomials of degree \(s-m-\deg(Q)\) whose coefficients are indeterminates \(\factorcoefficient{i,I}.\)
\end{itemize}

\item\label{algorithm-6-linear-system}Solve the linear system \(R_i=0\), \(i=1,\ldots,\dim \freelielayer{r}{m}\) in the variables \(\covector,\factorcoefficient{}\). The \(\covector\) component of any solution  is a covector for which trajectories reaching zero of the ODE \(\dot{x}=P(x)\) lift to abnormals in the free Carnot group \(\freecarnot{r,s}\).
\end{enumerate}

\end{algorithm}
An implementation of {Algorithm~{\ref{algorithm-abnormal-trajectory}}} using the SageMath computer algebra system \cite{sagemath-9-0} is available in \cite{software-ode_abnormals}.
\begin{remark}\label{remark-algorithm-nonsharp}
The naive upper bound computed in step \ref{algorithm-4-sufficient-step} of {Algorithm~{\ref{algorithm-abnormal-trajectory}}} for the nilpotency step \(s\) as a function of the rank and degree of the polynomials is in general horribly inefficient, see the explosive growth already for the smallest ranks \(r\) and degrees \(d\) listed in {Figure~{\ref{table-nilpotency-step-bounds}}}. In practice, solutions are found in much smaller nilpotency steps, see {Subsection~{\ref{example-lorenz-butterfly}}} for a particularly egregious example where the a priori upper bound is \(s=724\), but the system has nontrivial solutions already for \(s=13\). For practical computations it is more efficient to form the linear system in smaller steps \(s'\leq s\) and increase \(s'\) one by one until a nontrivial solution is found.

In fact, computations suggest that there exists a much more refined bound \(s=s(d)\) independent from the rank \(r\). Linear ODEs with randomly chosen coefficients in ranks 2, 3, and 4 always had solutions in step \(s=7\), despite the a priori bounds for \(s\) being 11, 89, and 386 respectively. Similarly the quadratic ODEs of {Subsection~{\ref{example-the-hawaiian-earring}}} and {Subsection~{\ref{example-lorenz-butterfly}}} in ranks 2 and 3 both have solutions in step \(s=13\) despite the increase of the a priori bound from \(s=38\) to \(s=724\).
\end{remark}
\begin{figure}[hbtp]%
\centering%
{%
\begin{tabular}{r|rrrrr}%
\(r\)\textbackslash{}\(d\)&1&2&3&4&5\tabularnewline\hline%
2&11&38&172&577&2372\tabularnewline[0pt]%
3&89&724&6034&46036&365813\tabularnewline[0pt]%
4&386&5322&73109&983505&13529000%
\end{tabular}%
}%
\caption{Some values for \(s=s(r,d)\) of {Theorem~{\ref{theorem-every-ode-trajectory-is-abnormal-quantified}}}.}%
\label{table-nilpotency-step-bounds}%
\end{figure}

\typeout{************************************************}
\typeout{Subsection 5.1 A logarithmic spiral (r2s7)}
\typeout{************************************************}

\subsection{A logarithmic spiral (r2s7)}\label{example-rank-2-spirals}
\begin{figure}[hbtp]%
\centering%
\resizebox{0.3\linewidth}{!}{\begin{tikzpicture}%
\def\rmax{8}%
\def\step{0.5}%
\foreach \i in {0,\step,...,\rmax}%
{%
\draw[line width={0.25-0.25*\i/\rmax}, domain={\i*pi}:{(\i+\step)*pi}, smooth] plot ({exp(-0.25*\x)*cos(\x r)},{exp(-0.25*\x)*sin(\x r)});%
}%
\end{tikzpicture}%
}%
\caption{The logarithmic spiral \(e^{-t/4}(\cos t,\sin t)\)}%
\label{figure-logarithmic-spiral}%
\end{figure}
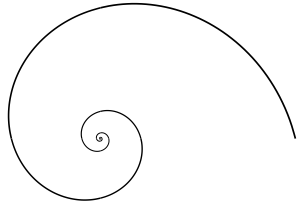
The logarithmic spiral
\begin{equation*}
\gamma\colon [0,\infty)\to\RR^2,\quad \gamma(t) = e^{-t}(\cos t,\sin t)
\end{equation*}
is a trajectory of the linear ODE
\begin{align*}
\dot{x}_1 \amp= P_1(x) = -x_1-x_2\\
\dot{x}_2 \amp= P_2(x) = x_1-x_2
\end{align*}
with \(\lim\limits_{t\to\infty}\gamma(t)=0\). The construction of {Algorithm~{\ref{algorithm-abnormal-trajectory}}} will show that the logarithmic spiral \(\gamma\) lifts to an abnormal curve in the free Carnot group of rank 2 and step 7.

Step \ref{algorithm-1-orthogonal-poly} Define polynomials \(Q_{1}:=x_1-x_2\) and \(Q_{2}:=x_1+x_2\) so that \((Q_{1},Q_{2})\) is orthogonal to the ODE vector \((P_1,P_2)\).

Step \ref{algorithm-2-nonzero-derivatives} Suppose \(Q\in\polyring{\hallset}\) is a polynomial such that \(X_1Q = Q_{1}\) and \(X_2Q = Q_{2}\) and compute the higher order derivatives. The restricted {action} \(\freelie{2}\acts\polyring{x_1,x_2}\) is defined by \(X_1=\partial_1\) and \(X_2=\partial_2\), as can be seen by considering the coordinate expressions of the left-invariant horizontal vector fields in any rank 2 Carnot group. Since \(\deg(X_1Q)=\deg(X_2Q)=1\), the only nontrivial derivative is
\begin{equation*}
X_{12}Q = [X_1,X_2]Q = X_1(X_2Q)-X_2(X_1Q) = 2\text{,}
\end{equation*}
and all the higher order derivatives are zero.

Step \ref{algorithm-3-pde-integration} In the {action} \(\freelie{2}\acts\polyring{\hallset}\), each Hall basis element acts by \(X_{w}=\partial_{w}+\rho_{w}\), with \(\rho_{w}\) some derivation whose kernel contains all polynomials in the variables of degree equal or lower than \(w\). Hence the nonzero partial derivatives determine the variables of the polynomial \(Q\), which in this case means that \(Q\in\polyring{x_1,x_2,x_{12}}\). The action \(\freelie{2}\acts\polyring{x_1,x_2,x_{12}}\) is then determined by exponential coordinates {adapted} to the {deg-left-right Hall set} on any rank 2 Carnot group of step at least \(2\), the prototypical example being the Heisenberg group. Explicitly, the derivations are
\begin{align*}
X_1 \amp= \partial_1\amp
X_2 \amp= \partial_2+x_1\partial_{12}\amp
X_{12} \amp= \partial_{12}\text{.}
\end{align*}

Integrating in the maximal variable \(x_{12}\) gives
\begin{equation*}
Q = 2x_{12} + Q^{(2)}
\end{equation*}
for some polynomial \(Q^{(2)}\in\polyring{x_1,x_2}\). The remainder \(Q^{(2)}\) satisfies the PDE
\begin{align*}
\partial_1Q^{(2)} \amp= X_1(Q-2x_{12}) = X_1Q = x_1-x_2\\
\partial_2Q^{(2)} \amp= X_2(Q-2x_{12}) = X_2Q - 2x_1 = -x_1+x_2\text{.}
\end{align*}
Integrating in the variables \(x_2,x_1\), a solution is \(Q^{(2)} = \frac{1}{2}x_1^2-x_1x_2+\frac{1}{2}x_2^2\), so the full solution is
\begin{equation}
Q = \frac{1}{2}x_1^2-x_1x_2+\frac{1}{2}x_2^2 + 2x_{12}\text{.}\label{eq-Q}
\end{equation}

Step \ref{algorithm-4-sufficient-step} The polynomial \(Q\) contains variables of degrees \(1\) and \(2\), so \(m=3\). The dimensions of the layers \(1,2,3\) of the free Lie algebra of rank 2 are \(2,1,2\), respectively. To determine a sufficient nilpotency step, the generating function to consider is
\begin{equation*}
\Delta(t) = \frac{(1-2(1-t^2))t^3}{(1-t)^2(1-t^2)}\text{.}
\end{equation*}
The first few terms of its series expansion are
\begin{equation*}
\Delta(t) = - t^{3} -2 t^{4} -2 t^{5} -2 t^{6} - t^{7} + 2 t^{9} + 4 t^{10} + 7 t^{11} + \cdots\text{.}
\end{equation*}
Since \(-1-2-2-2-1+2+4+7 = 5\geq 1\), the logarithmic spiral will have an abnormal lift at least in step \(11\).

Step \ref{algorithm-5-abnormal-polys} Let \(\mathfrak{g}=\freelie{2,11}/[\lowercentralseriesterm{\freelie{2,11}}{3},\lowercentralseriesterm{\freelie{2,11}}{3}]\) be the quotient eliminating variables of degree \(\geq 3\) from the {abnormal polynomials} \(\abnormalpolynomial{112}{\covector}\) and \(\abnormalpolynomial{212}{\covector}\), which in nilpotency step 11 are polynomials of degree 8. By {Lemma~{\ref{lemma-free-abnormal-polynomial}}}, the abnormal polynomial \(\abnormalpolynomial{112}{\covector}\) has the simple expression
\begin{equation}
\abnormalpolynomial{112}{\covector}(x_1,x_2,x_{12}) = \sum_{a+b+2c\leq 8}\frac{\covector_{(12)^c(2)^b(1)^{a+2}2}}{a!b!c!}x_1^ax_2^bx_{12}^c\text{.}\label{eq-P112}
\end{equation}

The abnormal polynomial \(\abnormalpolynomial{212}{\covector}\) is not as simple, since the Lie bracket \(\ad{X_{12}}^c\ad{X_2}^b\ad{X_1}^aX_{212}\) is a {deg-left-right Hall tree} only when \(a=0\) by the characterization of {Lemma~{\ref{lemma-hall-words-for-deg-left-right-order}}}. For \(a\gt 0\), a normal form may be computed by considering the restricted adjoint representation \(\ad{}\colon\mathfrak{g}\to\mathfrak{gl}(\lowercentralseriesterm{\mathfrak{g}}{3})\). By the construction of the quotient \(\mathfrak{g}\), the only nontrivial commutator in \(\mathfrak{gl}(\lowercentralseriesterm{\mathfrak{g}}{3})\) is \([\ad{X_1},\ad{X_2}] = \ad{X_{12}}\). That is, a normal form may be computed using only the relation
\begin{equation*}
\ad{X_1}\ad{X_2} = \ad{X_2}\ad{X_1} + \ad{X_{12}}\text{.}
\end{equation*}
Applying the above \(a\) times, the resulting normal form for \(a\gt 0\) is
\begin{align*}
\ad{X_{12}}^c\ad{X_2}^b\ad{X_1}^aX_{212} \amp= \ad{X_{12}}^c\ad{X_2}^b\ad{X_1}^a\ad{X_2}X_{12}\\
\amp= \ad{X_{12}}^c\ad{X_2}^{b+1}\ad{X_1}^aX_{12} + (a-1)\ad{X_{12}}^{c+1}\ad{X_2}^{b}\ad{X_1}^{a-1}X_{12}\text{.}
\end{align*}
{Lemma~{\ref{lemma-abnormal-polynomials-in-exponential-coordinates}}} then gives the explicit expression
\begin{align}
\abnormalpolynomial{212}{\covector}\amp(x_1,x_2,x_{12})
= \sum_{b+2c\leq 8}\frac{\covector_{(12)^c(2)^{b+1}12}}{b!c!}x_2^bx_{12}^c\label{eq-P212}\\
\amp+ \sum_{\substack{a+b+2c\leq 8\\a\geq 1}}\frac{\covector_{(12)^c(2)^{b+1}(1)^{a+1}2}+(a-1)\covector_{(12)^{c+1}(2)^b(1)^a2}}{a!b!c!}x_1^ax_2^bx_{12}^c\text{.}\notag
\end{align}

For \(i=1,2\), define polynomials
\begin{equation}
\factorpolynomial{\factorcoefficient{i}}(x_1,x_2,x_{12}) := \sum_{a+b+2c\leq 6}\factorcoefficient{i,a,b,c}x_1^ax_2^bx_{12}^c\label{eq-factorpoly}
\end{equation}
and compute for \(w_{1}=112\) and \(w_{2}=212\) the difference polynomials
\begin{equation}
R_i = \sum_{a+b+2c\leq 8}R_{i,a,b,c}(\covector,\factorcoefficient{i})x_1^ax_2^bx_{12}^c := \abnormalpolynomial{w_{i}}{\covector}-\factorpolynomial{\factorcoefficient{i}}Q\label{eq-Ri}
\end{equation}
using the explicit expressions {({\ref{eq-Q}})}\textendash{}{({\ref{eq-factorpoly}})}.

Step \ref{algorithm-6-linear-system} Consider the linear system
\begin{equation*}
R_{i,a,b,c}(\covector,\factorcoefficient{i}) = 0,\quad i=1,2, \quad a+b+2c\leq 8\text{.}
\end{equation*}
In the full set of \((\covector,\factorcoefficient{})\) variables, there are 190 equations and 220 variables: 120 variables \(\covector_{112},\covector_{212},\ldots\) and 100 variables \(\factorcoefficient{i,a,b,c}\). Reducing to a system in only the \(\factorcoefficient{}\) variables as in the proof of {Proposition~{\ref{proposition-existence-of-polynomial-factor}}} leaves a system of 95 equations and 100 variables. The solution space is however much bigger than the difference: there is a 38 dimensional space of solutions \((\covector,\factorcoefficient{})\), all with a nonzero \(\factorcoefficient{}\) component.

One of the simplest solutions gives a degree 7 covector
\begin{align*}
\covector\amp= 3 \covector_{1111112}  - 3 \covector_{2211112}  + 6 \covector_{2221112}  - 9 \covector_{2222112}  + 18 \covector_{2222212} \\
\amp + 2 \covector_{1211112}  - 2 \covector_{1222112}  + 8 \covector_{1222212}  + 4 \covector_{1212112}  + 8 \covector_{1212212}
\end{align*}
Substituting the above solution \(\covector\) into the formulas {({\ref{eq-P112}})} and {({\ref{eq-P212}})} and factoring gives the abnormal polynomials
\begin{align*}
\abnormalpolynomial{112}{\covector} \amp = \frac{1}{4} (x_{1}^{2} + 2 x_{1} x_{2} - 3 x_{2}^{2} + 4 x_{12})Q\\
\abnormalpolynomial{212}{\covector} \amp = \frac{1}{2}(x_{1}^{2} + 3 x_{2}^{2} + 4 x_{12})Q
\end{align*}
The conclusion is that the logarithmic spiral \(\gamma(t)=e^{-t}(\cos t,\sin t)\) lifts to an abnormal curve in the free Carnot group of rank 2 and step 7.

\typeout{************************************************}
\typeout{Subsection 5.2 Planar linear ODEs (r2s7)}
\typeout{************************************************}

\subsection{Planar linear ODEs (r2s7)}\label{example-parametric-families-of-odes}
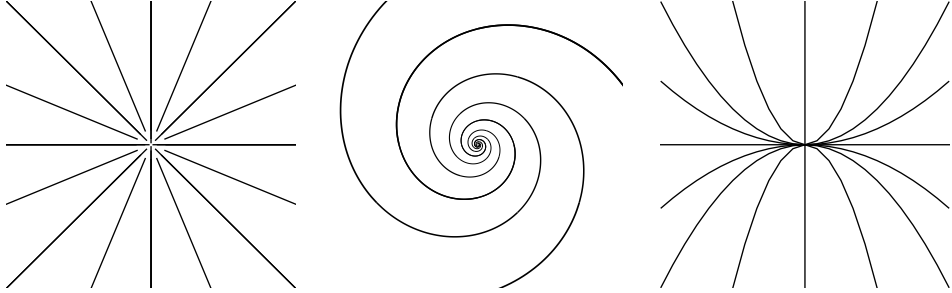
\begin{figure}[hbtp]%
\centering%
\resizebox{0.3\linewidth}{!}{\begin{tikzpicture}[scale=1.5]%
\clip (-1,-1) rectangle(1,1);%
\foreach \offset/\cutoff/\lines in {0/0.01/2, 45/0.04/4, 22.5/0.1/8}%
{%
\foreach \i in {1,...,\lines}%
{%
\draw ({180+180*\i/\lines}:2) -- ({180+180*\i/\lines}:\cutoff) ({180*\i/\lines}:\cutoff)-- ({180*\i/\lines}:2);%
}%
}%
\end{tikzpicture}}\hspace{0.04\linewidth}%
\resizebox{0.3\linewidth}{!}{\begin{tikzpicture}[scale=1.5]%
\clip (-1,-1) rectangle(1,1);%
\def\rmax{6}%
\def\step{0.5}%
\foreach \i in {0,\step,...,\rmax}%
{%
\def\scale{1.2}%
\foreach \j in {0,...,3}%
{%
\draw[line width={0.5-0.5*\i/\rmax}, domain={\i*pi}:{(\i+\step)*pi}, smooth] plot ({\scale*exp(-0.25*\x)*cos((\x + 2*pi/3*\j) r)},{\scale*exp(-0.25*\x)*sin((\x + 2*pi/3*\j) r)});%
}%
}%
\end{tikzpicture}}\hspace{0.04\linewidth}%
\resizebox{0.3\linewidth}{!}{\begin{tikzpicture}[scale=1.5]%
\clip (-1,-1) rectangle(1,1);%
\draw (-2,0)--(2,0);%
\draw (0,-2,0)--(0,2);%
\def\curves{3}%
\foreach \i in {1,...,\curves}%
{%
\draw[domain=-1:1] plot({\x},{(\curves-1)*(\curves-1)*\x*\x/(\i*\i)})%
plot({\x},{-(\curves-1)*(\curves-1)*\x*\x/(\i*\i)});%
}%
\end{tikzpicture}}%
\caption{Some trajectories of planar linear ODEs}%
\label{figure-planar-phase-portraits}%
\end{figure}
The technique of {Algorithm~{\ref{algorithm-abnormal-trajectory}}} can be applied to ODEs depending on free parameters \(\freeparam_1,\ldots,\freeparam_k\in\RR\) by replacing all the considerations over the field \(\RR\) with the polynomial ring \(\polyring{\freeparam_1,\ldots,\freeparam_k}\) or the fraction field \(\fractionfield{\freeparam_1,\ldots,\freeparam_k}\) where necessary. Solving the linear system in step \ref{algorithm-6-linear-system} is where the only difference appears. The difference is that solving a linear system with coefficients in \(\fractionfield{\freeparam_1,\ldots,\freeparam_k}\) does not yield a universal solution, since the resulting nonzero covector may vanish for specific choices of parameters. Accounting for the vanishing leads to a semialgebraic description of the covector with finitely many different expressions depending on the parameters \(\freeparam_1,\ldots,\freeparam_k\).

Consider a generic homogeneous planar linear ODE
\begin{align}
\dot{x}_1 \amp = \freeparam_{11}x_1 + \freeparam_{12}x_2\label{eq-linear-ode}\\
\dot{x}_2 \amp = \freeparam_{21}x_1 + \freeparam_{22}x_2\notag
\end{align}
with parameters \(\freeparam_{11},\freeparam_{12},\freeparam_{21},\freeparam_{22}\in\RR\). As in {Subsection~{\ref{example-rank-2-spirals}}} for a logarithmic spiral, following {Algorithm~{\ref{algorithm-abnormal-trajectory}}} will show that for any such ODE all trajectories whose closures meet the origin lift to abnormals in the free Carnot group of rank 2 and step 7.

Step \ref{algorithm-1-orthogonal-poly} Let \(\fractionfield{\vectorparam{\freeparam}}:=\fractionfield{\freeparam_{11},\freeparam_{12},\freeparam_{21},\freeparam_{22}}\) be the fraction field with the parameters as indeterminates. Suppose \(Q\in\polyringextended{\vectorparam{\freeparam}}{\hallset}\) is a polynomial with coefficients in the fraction field such that \(X_1Q=\freeparam_{21}x_1 + \freeparam_{22}x_2\) and \(X_2Q=-\freeparam_{11}x_1 - \freeparam_{12}x_2\).

Steps \ref{algorithm-2-nonzero-derivatives}\textendash{}\ref{algorithm-5-abnormal-polys} proceed exactly as in {Subsection~{\ref{example-rank-2-spirals}}}, since the {action} \(\freelie{2}\acts\polyring{\hallset}\), the {Poincaré series}, and the {abnormal polynomials} are all essentially uneffected by the field extension \(\RR\into \fractionfield{\vectorparam{\freeparam}}\), and \(\deg(X_1Q)=\deg(X_2Q)=1\) as before. The solution of the PDE for \(Q\) is
\begin{equation*}
Q = \frac{1}{2}\freeparam_{21}x_1^2 + \freeparam_{22}x_1x_2 - \frac{1}{2}\freeparam_{12}x_2^2 -(\freeparam_{11}+\freeparam_{22})x_{12}\text{.}
\end{equation*}
The abnormal polynomials \(\abnormalpolynomial{112}{\covector},\abnormalpolynomial{212}{\covector}\) are exactly the same as in {({\ref{eq-P112}})} and {({\ref{eq-P212}})}, and the difference polynomial coefficients \(R_{i,a,b,c}(\covector,\factorcoefficient{i})\) are again defined by {({\ref{eq-Ri}})}.

Step \ref{algorithm-6-linear-system} The solution space in the Lie algebra \(\freelie{2,11}/[\lowercentralseriesterm{\freelie{2,11}}{3},\lowercentralseriesterm{\freelie{2,11}}{3}]\) of nilpotency step 11 is a 31 dimensional space over \(\fractionfield{\vectorparam{\freeparam}}\). The simplest solutions are found already in step \(s'=7\). In \(\freelie{2,7}/[\lowercentralseriesterm{\freelie{2,7}}{3},\lowercentralseriesterm{\freelie{2,7}}{3}]\), the solution space is 2 dimensional and an example solution has the nonzero coefficients
\begin{align*}
\covector_{(1)^{4}(112)} \amp= 3  \freeparam_{21}^2  (6\freeparam_{11}^2 - 5\freeparam_{12}\freeparam_{21} + 13\freeparam_{11}\freeparam_{22} + 2\freeparam_{22}^2)\\
\covector_{(2)(1)^{3}(112)} \amp= 3  \freeparam_{21}  (\freeparam_{11}\freeparam_{12}\freeparam_{21} + 5\freeparam_{11}^2\freeparam_{22} - 3\freeparam_{12}\freeparam_{21}\freeparam_{22} + 11\freeparam_{11}\freeparam_{22}^2 + 2\freeparam_{22}^3)\\
\covector_{(2)^{2}(1)^{2}(112)} \amp= -4\freeparam_{11}^2\freeparam_{12}\freeparam_{21} + 3\freeparam_{12}^2\freeparam_{21}^2 - 3\freeparam_{11}\freeparam_{12}\freeparam_{21}\freeparam_{22}\\
\amp+ 8\freeparam_{11}^2\freeparam_{22}^2 - 2\freeparam_{12}\freeparam_{21}\freeparam_{22}^2 + 18\freeparam_{11}\freeparam_{22}^3 + 4\freeparam_{22}^4\\
\covector_{(2)^{3}(1)(112)} \amp= -3  \freeparam_{12}  (\freeparam_{11}\freeparam_{12}\freeparam_{21} + 3\freeparam_{11}^2\freeparam_{22} - \freeparam_{12}\freeparam_{21}\freeparam_{22} + 5\freeparam_{11}\freeparam_{22}^2)\\
\covector_{(2)^{4}(112)} \amp= -3  \freeparam_{12}^2  (-2\freeparam_{11}^2 + \freeparam_{12}\freeparam_{21} - \freeparam_{11}\freeparam_{22} + 2\freeparam_{22}^2)\\
\covector_{(12)(1)^{2}(112)} \amp= \freeparam_{21}  (\freeparam_{11} + \freeparam_{22})  (-4\freeparam_{11}^2 + 3\freeparam_{12}\freeparam_{21} - 9\freeparam_{11}\freeparam_{22} - 2\freeparam_{22}^2)\\
\covector_{(12)(2)(1)(112)} \amp= - (\freeparam_{11} + \freeparam_{22})  (\freeparam_{11}\freeparam_{12}\freeparam_{21} + 3\freeparam_{11}^2\freeparam_{22} - \freeparam_{12}\freeparam_{21}\freeparam_{22} + 7\freeparam_{11}\freeparam_{22}^2 + 2\freeparam_{22}^3)\\
\covector_{(12)(2)^{2}(112)} \amp= \freeparam_{12}  (\freeparam_{11} + \freeparam_{22})  (2\freeparam_{11}^2 - \freeparam_{12}\freeparam_{21} + 3\freeparam_{11}\freeparam_{22})\\
\covector_{(12)^{2}(112)} \amp= -(\freeparam_{11} + \freeparam_{22})^2  (-2\freeparam_{11}^2 + \freeparam_{12}\freeparam_{21} - 5\freeparam_{11}\freeparam_{22} - 2\freeparam_{22}^2)\\
\covector_{(2)^{4}(212)} \amp= 12  (\freeparam_{11} + \freeparam_{22})  \freeparam_{12}^3\\
\covector_{(12)(2)^{2}(212)} \amp= 2  \freeparam_{12}^2  (\freeparam_{11} + \freeparam_{22})^2
\end{align*}

For some specializations \(\vectorparam{\freeparam}\in\RR^4\) such as \(\freeparam_{11}=-2\), \(\freeparam_{12}=\freeparam_{21}=0\), \(\freeparam_{21}=1\), the above covector vanishes and hence is not a valid abnormal covector. Nonetheless the existence of a generic solution implies that solutions exist also for every other choice of the parameters by the following brief argument:
\begin{lemma}\label{lemma-kernel-of-specialization}
If a matrix with coefficients in a polynomial ring \(\polyring{a_1,\ldots,\freeparam_k}\) has a nontrivial kernel over the fraction field \(\fractionfield{\freeparam_1,\ldots,\freeparam_k}\), then it has a nontrivial kernel for any specialization of \(\freeparam_1,\ldots,\freeparam_k\) in \(\RR\).
\end{lemma}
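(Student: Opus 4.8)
The plan is to reduce the claim to the elementary fact that the rank of a matrix, characterized through the vanishing of minors, can only drop under specialization. Write $M$ for the given matrix with entries in $\polyring{\freeparam_1,\ldots,\freeparam_k}$, say of size $m\times n$, and let $a\in\RR^k$ be an arbitrary specialization, with $M(a)$ the resulting real matrix obtained by evaluating every entry at $a$. Having a nontrivial kernel is equivalent to the rank being strictly less than the number of columns $n$, so it suffices to show that $\operatorname{rank} M(a)<n$.

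First I would recall that the rank of a matrix over any field equals the largest size of a square submatrix with nonzero determinant. The determinant of any $j\times j$ submatrix of $M$ is a polynomial in $\polyring{\freeparam_1,\ldots,\freeparam_k}$, and since the inclusion $\polyring{\freeparam_1,\ldots,\freeparam_k}\hookrightarrow\fractionfield{\freeparam_1,\ldots,\freeparam_k}$ is injective, such a minor is zero in the fraction field if and only if it is the zero polynomial. Evaluating at $a$ commutes with forming determinants, so the $j\times j$ minors of $M(a)$ are exactly the values at $a$ of the corresponding minor polynomials of $M$.

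The crux is the inequality $\operatorname{rank} M(a)\leq \operatorname{rank}_{\fractionfield{\freeparam_1,\ldots,\freeparam_k}} M$. Indeed, if some $j\times j$ submatrix of $M(a)$ has nonzero determinant, then the corresponding minor polynomial does not vanish at $a$, so it is not the zero polynomial and is therefore nonzero in the fraction field; the same submatrix of $M$ then witnesses rank at least $j$ over the fraction field. Taking $j$ to be the rank of $M(a)$ yields the stated inequality. By hypothesis the right-hand side is strictly less than $n$, so $\operatorname{rank} M(a)<n$ and $M(a)$ has a nontrivial kernel, which is the desired conclusion.

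I do not expect a genuine obstacle here: the argument is the standard lower semicontinuity of rank, and the only subtlety is bookkeeping in the degenerate case $m<n$, where both $M$ and $M(a)$ automatically have rank at most $m<n$ and hence nontrivial kernels, so the minor computation is only needed when $m\geq n$.
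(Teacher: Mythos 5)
Your argument is correct, but it takes a different route from the paper. You prove the general lower semicontinuity of rank under specialization: every minor of \(M\) is a polynomial in \(\polyring{\freeparam_1,\ldots,\freeparam_k}\), a nonzero minor of \(M(a)\) lifts to a nonzero minor of \(M\) over the fraction field, hence \(\operatorname{rank}M(a)\leq\operatorname{rank}M<n\). This works over any field and indeed for any ring homomorphism from the coefficient ring to a field. The paper instead condenses everything into the single polynomial \(\det(A^TA)\): a nontrivial kernel over \(\fractionfield{\freeparam_1,\ldots,\freeparam_k}\) forces \(\det(A^TA)=0\) identically, so the same determinant vanishes after specialization, and then \(\ker(A(a)^TA(a))=\ker(A(a))\) recovers the conclusion. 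That last identification is where the paper's proof quietly uses that \(\RR\) is formally real (\(x^TA^TAx=\abs{Ax}^2=0\) forces \(Ax=0\)), so its argument is shorter but tied to the real ground field, whereas yours is the more robust and field-independent version; both are complete proofs of the stated lemma.
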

\begin{proof}\label{g:proof:idp28}
The kernel of a matrix \(A\) is nontrivial if and only if the matrix \(A^TA\) has zero determinant. Let \(A\) be a matrix with coefficients in \(\polyring{\freeparam_1,\ldots,\freeparam_k}\). If \(A\) has a nontrivial kernel over \(\fractionfield{\freeparam_1,\ldots,\freeparam_k}\), then
\begin{equation*}
\det(A^TA) = 0\in\polyring{\freeparam_1,\ldots,\freeparam_k}.
\end{equation*}
Consequently for any specialization \(\vectorparam{\freeparam}\in\RR^k\)
\begin{equation*}
\det(A(\vectorparam{\freeparam})^TA(\vectorparam{\freeparam})) = \det(A^TA)(\vectorparam{\freeparam}) = 0\in\RR,
\end{equation*}
so the specialized matrix \(A(\vectorparam{\freeparam})\) also has a nontrivial kernel.
\end{proof}
The above shows that for any choice of parameters \(\freeparam_{11},\freeparam_{12},\freeparam_{21},\freeparam_{22}\in\RR\), all trajectories to the origin for the linear ODE {({\ref{eq-linear-ode}})} lift to abnormals in the free Carnot group of rank 2 and step 7.

Finding an explicit covector also in the cases where the generic one vanishes would require a more careful look at the computation to solve the linear system over the fraction field \(\fractionfield{\vectorparam{\freeparam}}\). In the standard Gauss-Jordan algorithm, each attempted division by a nonconstant polynomial \(P\) splits the consideration into two cases: the points within the variety \(P=0\) and those outside. Inside the variety, the polynomial is replaced by zero and the Gauss-Jordan procedure continues. Outside the variety, the polynomial \(P\) can freely be used as a denominator. Expanding away the denominators in the end leads to finitely many different reduced echelon forms in distinct semialgebraic varieties, with each echelon form consisting of coefficients in the polynomial ring \(\polyring{\vectorparam{\freeparam}}\). Nontrivial elements of the solution space can then be read off from the echelon form, again splitting into cases based on the vanishing of the coefficients. This procedure defines an admissible abnormal covector \(\covector\) as a semialgebraic function \(\covector(\vectorparam{\freeparam})\).

However even in the relatively simple case of the linear ODE {({\ref{eq-linear-ode}})}, solving the resulting \(44\times 45\) system in nilpotency step 7 as described above leads to a rather cumbersome expression, and this will not be pursued here.

\typeout{************************************************}
\typeout{Subsection 5.3 The Hawaiian earring (r2s13)}
\typeout{************************************************}

\subsection{The Hawaiian earring (r2s13)}\label{example-the-hawaiian-earring}
\begin{figure}[hbtp]%
\centering%
\resizebox{0.325\linewidth}{!}{\begin{tikzpicture}%
\def\rmax{25}%
\draw[line width=0.25] (0,1) circle(1);%
\foreach \i in {2,...,\rmax}%
{%
\draw[line width={0.5/\i}] (0,{1/\i}) circle({1/\i});%
}%
\draw[line width=0.5/\rmax,fill] (0,{1/\rmax}) circle({1/\rmax});%
\end{tikzpicture}%
}%
\caption{The Hawaiian earring}%
\label{figure-hawaiian-earring}%
\end{figure}
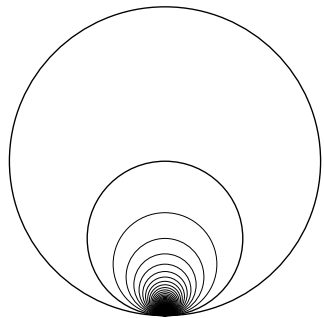
The Hawaiian earring is a countable union \(E=\bigcup_{n\in\NN}E_n\) of circles \(E_n = S^1((0,1/n),1/n)\) all with a common tangency point at 0. The punctured circles \(E_n\setminus\{(0,0)\}\) have parametrizations that are trajectories of the complex ODE \(\dot{z}=z^2\), i.e.\@, the planar quadratic ODE
\begin{align*}
\dot{x}_1 \amp= x_1^2-x_2^2,\qquad\\
\dot{x}_2 \amp= 2x_1x_2\text{.}
\end{align*}

Since all the punctured circles are trajectories of the same ODE and have a common point in their closure, the construction of {Algorithm~{\ref{algorithm-abnormal-trajectory}}} implies they all have the same covector. The concatenation resulting in the full Hawaiian earring will be shown to lift to an abnormal curve in the free Carnot group of rank 2 and step 13.

Step \ref{algorithm-1-orthogonal-poly} Set \(X_1Q:=2x_1x_2\) and \(X_2Q:=-x_1^2+x_2^2\).

Step \ref{algorithm-2-nonzero-derivatives} The nonzero commutators are
\begin{align*}
X_{12}Q \amp = [X_1,X_2]Q = X_1(X_2Q)-X_2(X_1Q) = -4x_1\\
X_{112}Q \amp = [X_1,X_{12}]Q = X_1(X_{12}Q)-X_{12}(X_1Q) = -4\text{.}
\end{align*}

Step \ref{algorithm-3-pde-integration} The {action} \(\freelie{2}\acts \polyring{x_1,x_2,x_{12},x_{112}}\) is
\begin{align*}
X_1 \amp= \partial_1,\amp
X_2 \amp= \partial_2+x_1\partial_{12}+\frac{1}{2}x_1^2\partial_{112},\amp
X_{12} \amp= \partial_{12}+x_1\partial_{112},\amp
X_{112} \amp= \partial_{112}\text{.}
\end{align*}
Integrating in \(x_{112}\) and then \(x_2\) and \(x_1\) gives the solution
\begin{equation*}
Q = x_1^2x_2+\frac{1}{3}x_2^3 - 4x_{112}\text{.}
\end{equation*}

Step \ref{algorithm-4-sufficient-step} Set \(m=4\). The dimensions of the first four layers \(\freelielayer{r}{k}\) are \(2,1,2,3\). The generating function determining the step upper bound is
\begin{align*}
\Delta(t) \amp = \frac{(1-3(1-t^3))t^3}{(1-t)^2(1-t^2)(1-t^3)^2}\\
\amp=-2 t^{4} - \cdots -20 t^{30} +45 t^{31} + \cdots +930 t^{38} + \cdots\text{.}
\end{align*}
This series has the partial sums \(\sum_{k=0}^{37}\differenceinteger{k} = -205\) and \(\sum_{k=0}^{38}\differenceinteger{k} = 725\), so the naive upper bound for the abnormality step is \(s=38\).

Step \ref{algorithm-5-abnormal-polys} Since the upper bound \(s=38\) is so large, it is more practical to solve the system in step \(s'\lt s\) and keep increasing \(s'\) until a nontrivial solution is found. Let \(\mathfrak{g}\) be the quotient \(\mathfrak{g}=\freelie{2,s'}/[\lowercentralseriesterm{\freelie{2,s'}}{4},\lowercentralseriesterm{\freelie{2,s'}}{4}]\). As before, the {abnormal polynomials} are computed by computing normal forms for the brackets
\begin{equation*}
\ad{X_{212}}^e\ad{X_{112}}^d\ad{X_{12}}^c\ad{X_2}^b\ad{X_1}^aX_{w_{i}},\quad i=1,2,3\text{,}
\end{equation*}
where \(w_{1}=1112\), \(w_{2}=2112\) and \(w_{3}=2212\) are the {deg-left-right Hall words} of degree 4. The normal forms are computed using the represention \(\ad{}\colon\mathfrak{g}\to\mathfrak{gl}(\lowercentralseriesterm{\mathfrak{g}}{4})\), where the only nontrivial commutators come from \([X_1,X_2]=X_{12}\), \([X_1,X_{12}]=X_{112}\) and \([X_2,X_{12}]=X_{212}\).

Step \ref{algorithm-6-linear-system} The first nontrivial solution is found in step \(s'=13\). The result is that the Hawaiian earring has an abnormal lift in the free Carnot group of rank 2 and step 13 with the covector \(\covector\) whose nonzero components are
\begin{align*}
\covector_{(2)^4(1)^5(1112)} \amp = 210\amp
\covector_{(112)^2(2)^2(1)(1112)} \amp = 2\\
\covector_{(2)^6(1)^3(1112)} \amp = 150\amp
\covector_{(2)^9(2112)} \amp = 280\\
\covector_{(2)^8(1)(1112)} \amp = 140\amp
\covector_{(112)(2)^6(2112)} \amp = -15\\
\covector_{(112)(2)^3(1)^3(1112)} \amp = -15\amp
\covector_{(112)^2(2)^3(2112)} \amp = 2\\
\covector_{(112)(2)^5(1)(1112)} \amp = -10\amp
\covector_{(112)^3(2112)} \amp = -2\text{.}
\end{align*}
The resulting {abnormal polynomials} have the factorizations
\begin{align*}
\abnormalpolynomial{1112}{\covector} \amp = -\frac{1}{96} x_{1}  x_{2}^{2} (-7 x_{1}^{2} x_{2} - x_{2}^{3} + 12 x_{112})Q\\
\abnormalpolynomial{2112}{\covector} \amp = \frac{1}{432} (-x_{2}^{3} + 3 x_{112}) (-15 x_{1}^{2} x_{2} - x_{2}^{3} + 12 x_{112})Q\\
\abnormalpolynomial{2212}{\covector} \amp = -\frac{1}{288} x_{1} (3 x_{1}^{4} x_{2} - x_{1}^{2} x_{2}^{3} - 6 x_{2}^{5} - 24 x_{1}^{2} x_{112} + 36 x_{2}^{2} x_{112})Q\text{.}
\end{align*}

\typeout{************************************************}
\typeout{Subsection 5.4 The Lorenz butterfly (r3s13)}
\typeout{************************************************}

\subsection{The Lorenz butterfly (r3s13)}\label{example-lorenz-butterfly}
\begin{figure}[hbtp]%
\centering%
\includegraphics[width=0.33\linewidth]{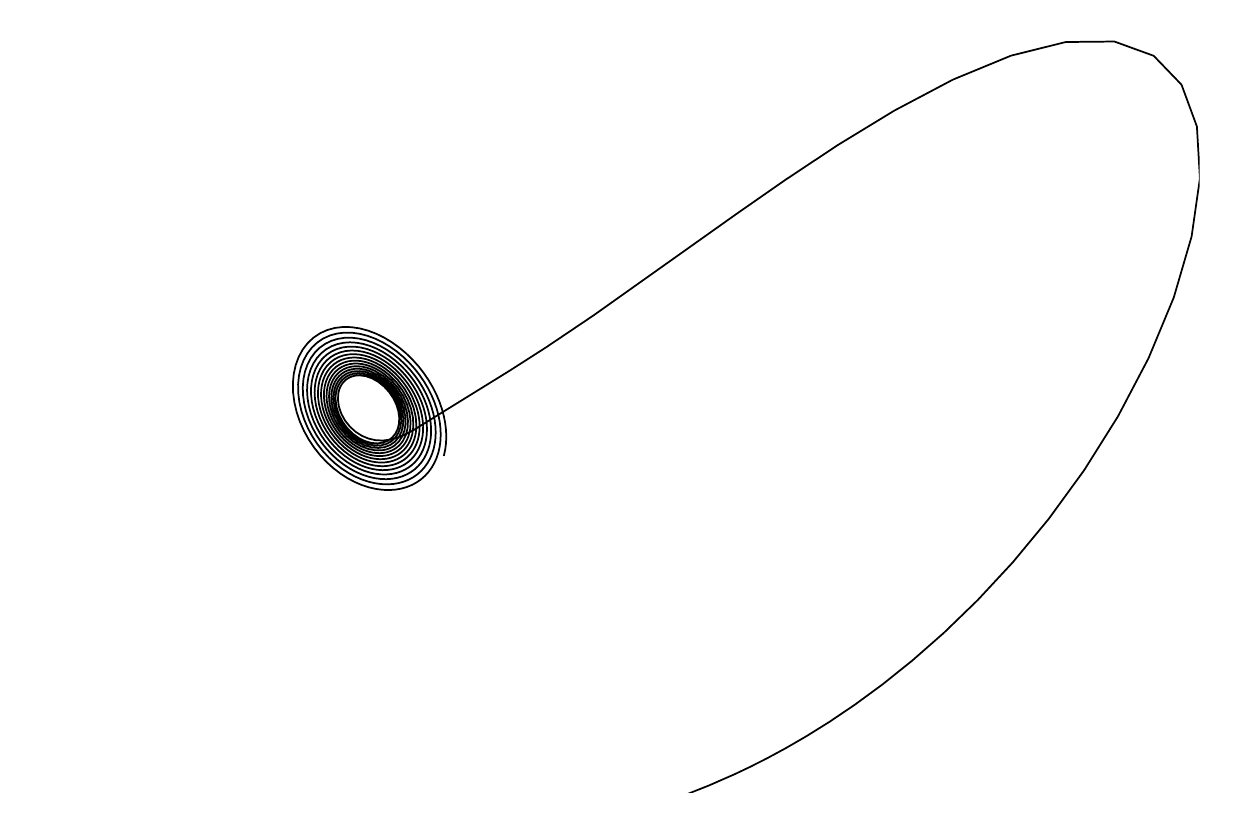}%
\includegraphics[width=0.33\linewidth]{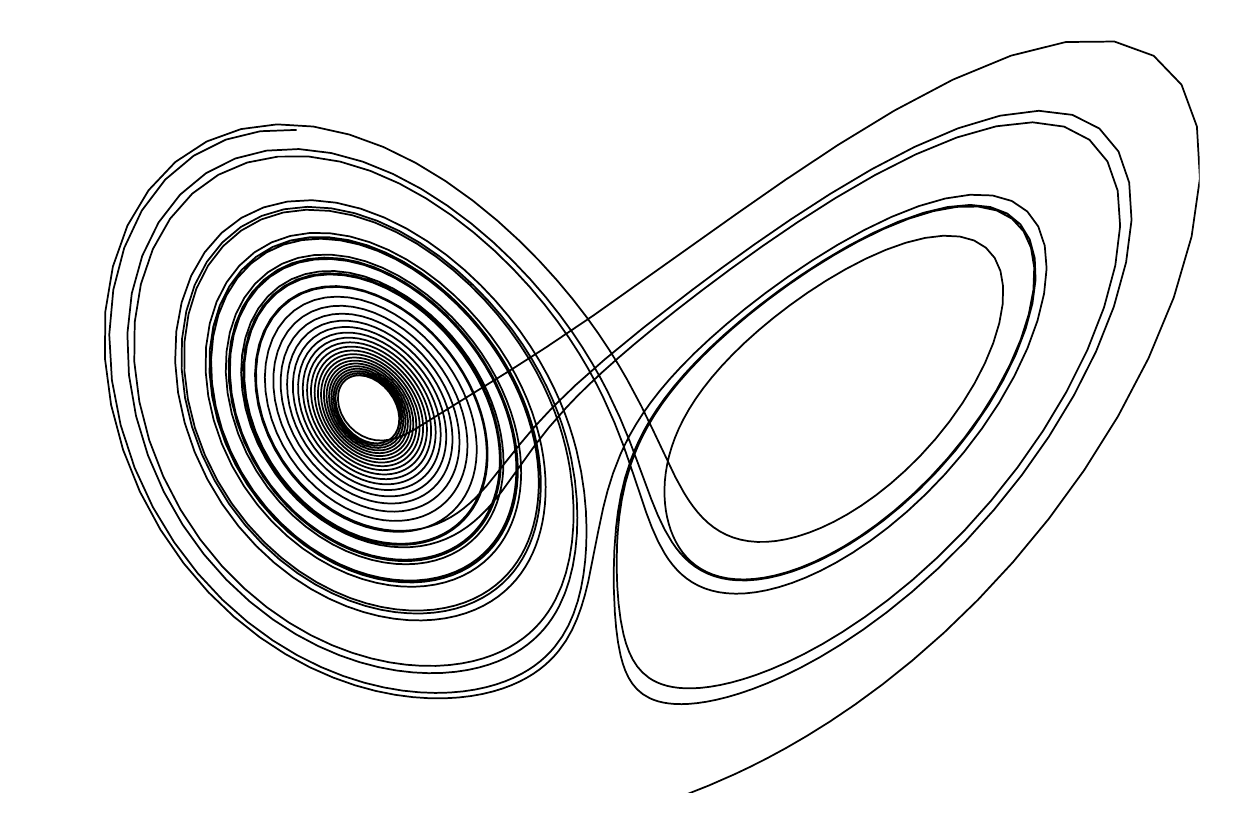}%
\includegraphics[width=0.33\linewidth]{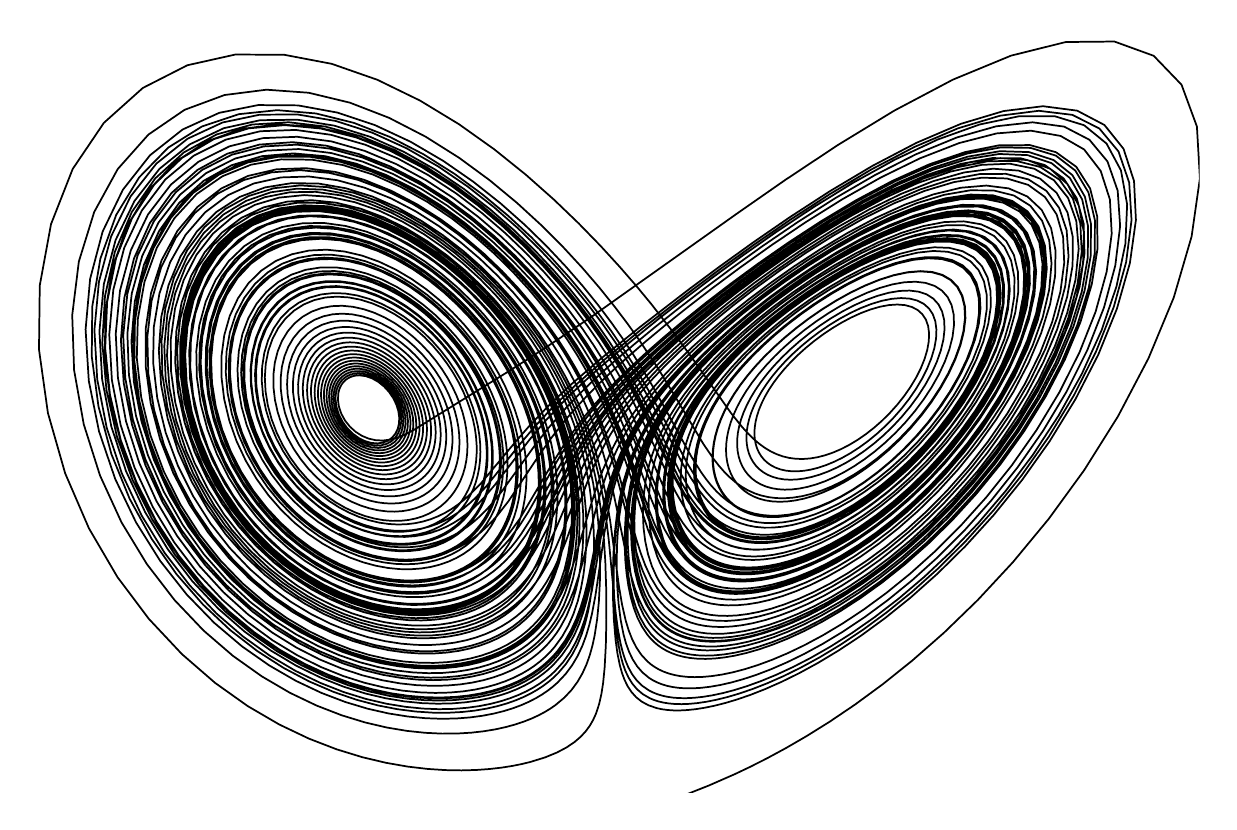}%
\caption{The trajectory of the Lorenz system starting from \((1,0,0)\) on the time interval \([0,T]\) for \(T=10\), \(T=30\), and \(T=100\).}%
\label{figure-lorenz-trajectory}%
\end{figure}
The Lorenz system
\begin{align*}
\dot{x}_1 \amp= 10(x_2-x_1)\\
\dot{x}_2 \amp= 28x_1-x_2-x_1x_3\\
\dot{x}_3 \amp= x_1x_2-\frac{8}{3}x_3
\end{align*}
is a classical example of a polynomial ODE system that exhibits chaotic behavior. Consider the trajectory starting from the point \((x_1,x_2,x_3)=(1,0,0)\), see {Figure~{\ref{figure-lorenz-trajectory}}} for a visualization. Following {Algorithm~{\ref{algorithm-abnormal-trajectory}}} will show that this trajectory lifts to an abnormal curve in the free Carnot group of rank 3 and step 13. Translating the initial point of the trajectory to the origin means that the ODE to study is
\begin{align*}
\dot{x}_1 \amp= -10x_1+10x_2+10\\
\dot{x}_2 \amp= -x_1x_3+28x_1-x_2+x_3-28\\
\dot{x}_3 \amp= x_1x_2-x_2-\frac{8}{3}x_3\text{.}
\end{align*}

Step \ref{algorithm-1-orthogonal-poly} In rank bigger than 2 there is more freedom to find a polynomial \(Q\in\polyring{\hallset}\) whose horizontal gradient is orthogonal to the ODE. One possible choice is
\begin{align*}
X_1Q\amp:=-x_1x_3 + 28x_1 - x_2 + x_3 - 28\\
X_2Q\amp:=10x_1 - 10x_2 - 10\\
X_3Q\amp:=0\text{.}
\end{align*}

Step \ref{algorithm-2-nonzero-derivatives} The {action} \(\freelie{3}\acts\polyring{x_1,x_2,x_3}\) on the horizontal variables is again \(X_1=\partial_1\), \(X_2=\partial_2\), \(X_3=\partial_3\), and all other elements give the zero derivation. The nonzero higher order derivatives of \(Q\) are
\begin{align*}
X_{12}Q \amp = X_1(X_2Q)-X_2(X_1Q) = 11\\
X_{13}Q \amp = X_1(X_3Q)-X_3(X_1Q) = x_1-1\\
X_{113}Q \amp = [X_1,X_{13}]Q = X_1(X_{13}Q)-X_{13}(X_1Q) = 1\text{.}
\end{align*}

Step \ref{algorithm-3-pde-integration} The {action} \(\freelie{3}\acts \polyring{x_1,x_2,x_{12},x_{13},x_{113}}\) is
\begin{align*}
X_1 \amp= \partial_1
\amp X_{12} \amp= \partial_{12}\\
X_2 \amp= \partial_2+x_1\partial_{12}
\amp X_{13} \amp= \partial_{13}+x_1\partial_{113}\\
X_3 \amp= \partial_3+x_1\partial_{13}+\frac{1}{2}x_1^2\partial_{113}
\amp X_{113} \amp= \partial_{113}\text{.}
\end{align*}
Integrating the variables in the order \(x_{113},x_{13},x_{12},x_3,x_2,x_1\) gives the solution
\begin{equation*}
Q = -\frac{1}{2}x_1^2x_3 + x_{113} + 14x_1^2 - x_1x_2 - 5x_2^2 + x_1x_3 + 11x_{12} - x_{13} - 28x_1 - 10x_2\text{.}
\end{equation*}

Step \ref{algorithm-4-sufficient-step} The dimensions of the first four layers of the free Lie algebra \(\freelie{3}\) are \(3,3,8,18\). The generating function
\begin{equation*}
\Delta(t) = \frac{(1-18(1-t^3))t^4}{(1-t)^3(1-t^2)^3(1-t^3)^8}
\end{equation*}
gives the a priori bound of step \(s=724\) for when a lift becomes abnormal.

Step \ref{algorithm-5-abnormal-polys} With the unreasonably large nilpotency step \(s=724\), it is better to solve the system in step \(s'\lt s\) and keep increasing \(s'\) until a nontrivial solution is found. In the quotient \(\mathfrak{g}=\freelie{3,s'}/[\lowercentralseriesterm{\freelie{3,s'}}{4},\lowercentralseriesterm{\freelie{3,s'}}{4}]\), the {abnormal polynomials} can be computed via normal forms for Lie brackets using the smaller family of commutation rules of the restricted adjoint representation \(\ad{}\colon\mathfrak{g}\to\mathfrak{gl}(\lowercentralseriesterm{\mathfrak{g}}{4})\) as in the previous examples.

Step \ref{algorithm-6-linear-system} The first solution exists in step \(s=13\), so the trajectory starting from \((1,0,0)\) of the Lorenz system is abnormal in the free Carnot group of rank 3 and step 13. In rank 3 and step 13 the linear system consists of 81360 equations in 34465 variables \(\covector\) and 9918 variables \(\factorcoefficient{}\). The solutions are however relatively sparse, with an example solution having 476 nonzero coefficients out of the possible 34465, suggesting that further simplifications using more refined quotient Lie algebras could be possible.

\bibliographystyle{amsalpha}
\bibliography{biblio}

\newcommand{\etalchar}[1]{$^{#1}$}
\providecommand{\bysame}{\leavevmode\hbox to3em{\hrulefill}\thinspace}
\providecommand{\MR}{\relax\ifhmode\unskip\space\fi MR }
\providecommand{\MRhref}[2]{%
  \href{http://www.ams.org/mathscinet-getitem?mr=#1}{#2}
}
\providecommand{\href}[2]{#2}
\begin{thebibliography}{BdSFPR18}

\bibitem[ABB20]{ABB-2020-subriemannian_geometry}
Andrei Agrachev, Davide Barilari, and Ugo Boscain, \emph{A comprehensive
  introduction to sub-{R}iemannian geometry}, Cambridge Studies in Advanced
  Mathematics, vol. 181, Cambridge University Press, Cambridge, 2020, From the
  Hamiltonian viewpoint, With an appendix by Igor Zelenko. \MR{3971262}

\bibitem[Agr09]{Agrachev-2009-points_of_smoothness}
A.~A. Agrach\"{e}v, \emph{Any sub-{R}iemannian metric has points of
  smoothness}, Dokl. Akad. Nauk \textbf{424} (2009), no.~3, 295--298.
  \MR{2513150}

\bibitem[BCJ{\etalchar{+}}20]{BCJPS-2018-rank_2_abnormals}
D.~Barilari, Y.~Chitour, F.~Jean, D.~Prandi, and M.~Sigalotti, \emph{On the
  regularity of abnormal minimizers for rank 2 sub-{R}iemannian structures}, J.
  Math. Pures Appl. (9) \textbf{133} (2020), 118--138. \MR{4044676}

\bibitem[BdSFPR18]{BFPR-2018-3d_Sard}
Andr\'{e} Belotto~da Silva, Alessio Figalli, Adam Parusi\'{n}ski, and Ludovic
  Rifford, \emph{Strong {S}ard conjecture and regularity of singular minimizing
  geodesics for analytic sub-riemannian structures in dimension 3}, arXiv
  e-prints (2018), arXiv:1810.03347.

\bibitem[BMP19]{Boarotto-Monti-Palmurella-2019-third_order}
Francesco Boarotto, Roberto Monti, and Francesco Palmurella, \emph{Third order
  open mapping theorems and applications to the end-point map}, arXiv e-prints
  (2019), arXiv:1907.11016.

\bibitem[Bou68]{Bourbaki-1968-lie_chapitre_4-6}
N.~Bourbaki, \emph{\'{E}l\'{e}ments de math\'{e}matique. {F}asc. {XXXIV}.
  {G}roupes et alg\`ebres de {L}ie. {C}hapitre {IV}: {G}roupes de {C}oxeter et
  syst\`emes de {T}its. {C}hapitre {V}: {G}roupes engendr\'{e}s par des
  r\'{e}flexions. {C}hapitre {VI}: syst\`emes de racines}, Actualit\'{e}s
  Scientifiques et Industrielles, No. 1337, Hermann, Paris, 1968. \MR{0240238}

\bibitem[Bou89]{Bourbaki-1975-lie_chapter_1-3}
Nicolas Bourbaki, \emph{Lie groups and {L}ie algebras. {C}hapters 1--3},
  Elements of Mathematics (Berlin), Springer-Verlag, Berlin, 1989, Translated
  from the French, Reprint of the 1975 edition. \MR{979493}

\bibitem[BV20]{Boarotto-Vittone-2020-dynamical_sard}
Francesco Boarotto and Davide Vittone, \emph{A dynamical approach to the {S}ard
  problem in {C}arnot groups}, J. Differential Equations \textbf{269} (2020),
  no.~6, 4998--5033. \MR{4104464}

\bibitem[CJT06]{Chitour-Jean-Trelat-2006-genericity_results}
Y.~Chitour, F.~Jean, and E.~Tr\'{e}lat, \emph{Genericity results for singular
  curves}, J. Differential Geom. \textbf{73} (2006), no.~1, 45--73.
  \MR{2217519}

\bibitem[GK95]{Gole-Karidi-1995_carnot_geodesics}
Chr. Gol\'{e} and R.~Karidi, \emph{A note on {C}arnot geodesics in nilpotent
  {L}ie groups}, J. Dynam. Control Systems \textbf{1} (1995), no.~4, 535--549.
  \MR{1364562}

\bibitem[Hak20]{software-ode_abnormals}
Eero Hakavuori, \emph{ehaka/ode-abnormals: v1.0}, Zenodo, jun 2020,
  https://doi.org/10.5281/zenodo.3898324.

\bibitem[Hal50]{Hall-1950-basis_for_free_lie_rings}
Marshall Hall, Jr., \emph{A basis for free {L}ie rings and higher commutators
  in free groups}, Proc. Amer. Math. Soc. \textbf{1} (1950), 575--581.
  \MR{38336}

\bibitem[HLD16]{Hakavuori-Le_Donne-2016-corners}
Eero Hakavuori and Enrico Le~Donne, \emph{Non-minimality of corners in
  subriemannian geometry}, Invent. Math. \textbf{206} (2016), no.~3, 693--704.
  \MR{3573971}

\bibitem[Hsu92]{Hsu-1992-calc_var_via_griffiths_formalism}
Lucas Hsu, \emph{Calculus of variations via the {G}riffiths formalism}, J.
  Differential Geom. \textbf{36} (1992), no.~3, 551--589. \MR{1189496}

\bibitem[LDLMV13]{LLMV-2013-extremal_curves}
Enrico Le~Donne, Gian~Paolo Leonardi, Roberto Monti, and Davide Vittone,
  \emph{Extremal curves in nilpotent {L}ie groups}, Geom. Funct. Anal.
  \textbf{23} (2013), no.~4, 1371--1401. \MR{3077915}

\bibitem[LDLMV18]{LLMV-2018-extremal_polynomials}
\bysame, \emph{Extremal polynomials in stratified groups}, Comm. Anal. Geom.
  \textbf{26} (2018), no.~4, 723--757. \MR{3853926}

\bibitem[LDMO{\etalchar{+}}16]{LMOPV-2016-sard}
Enrico Le~Donne, Richard Montgomery, Alessandro Ottazzi, Pierre Pansu, and
  Davide Vittone, \emph{Sard property for the endpoint map on some {C}arnot
  groups}, Ann. Inst. H. Poincar\'{e} Anal. Non Lin\'{e}aire \textbf{33}
  (2016), no.~6, 1639--1666. \MR{3569245}

\bibitem[LDZ19]{Le_Donne-Zust-2019-inverse_limits}
Enrico Le~Donne and Roger Z\"{u}st, \emph{Space of signatures as inverse limits
  of {C}arnot groups}, arXiv e-prints (2019), arXiv:1910.04589.

\bibitem[LS95]{Liu-Sussman-1995-shortest_paths_ranks_two}
Wensheng Liu and H\'{e}ctor~J. Sussman, \emph{Shortest paths for
  sub-{R}iemannian metrics on rank-two distributions}, Mem. Amer. Math. Soc.
  \textbf{118} (1995), no.~564, x+104. \MR{1303093}

\bibitem[Mon94]{Montgomery-1994-abnormal_minimizers}
Richard Montgomery, \emph{Abnormal minimizers}, SIAM J. Control Optim.
  \textbf{32} (1994), no.~6, 1605--1620. \MR{1297101}

\bibitem[Mon02]{Montgomery-2002-tour_of_subriemannian_geometries}
\bysame, \emph{A tour of subriemannian geometries, their geodesics and
  applications}, Mathematical Surveys and Monographs, vol.~91, American
  Mathematical Society, Providence, RI, 2002. \MR{1867362}

\bibitem[Reu93]{Reutenauer-1993-free_lie_algebras}
Christophe Reutenauer, \emph{Free {L}ie algebras}, London Mathematical Society
  Monographs. New Series, vol.~7, The Clarendon Press, Oxford University Press,
  New York, 1993, Oxford Science Publications. \MR{1231799}

\bibitem[{Sag}20]{sagemath-9-0}
{Sage Developers}, \emph{{S}age{M}ath, the {S}age {M}athematics {S}oftware
  {S}ystem ({V}ersion 9.0)}, 2020, https://www.sagemath.org.

\bibitem[Sus14]{Sussmann-2014-open_dense_regularity}
H.~J. Sussmann, \emph{A regularity theorem for minimizers of real-analytic
  subriemannian metrics}, 53rd IEEE Conference on Decision and Control, Dec
  2014, pp.~4801--4806.

\end{thebibliography}

\end{document}